\documentclass[12pt]{amsart}

\usepackage{amsmath,amssymb,amsfonts,amsthm,mathrsfs}
\usepackage[T1]{fontenc}  
\usepackage{palatino}     
\usepackage{xfrac}
\usepackage[usenames,dvipsnames]{xcolor}
\usepackage{tikz}
\usetikzlibrary{calc,shapes.geometric}

\usepackage{mathtools}

\def\mid_firstreturn{
  \draw (3, 0) -- (3, 2) -- (4, 2) -- (4, 3);
  \draw (3, 3) -- (1, 3);
  \draw (0, 3) -- (0, 2) -- (1, 2) -- (1, 0);
  \draw[color=gray,dashed] (1, 0) -- (3, 0);
}

\def\topfirstreturn{
  \draw (3, 0) -- (3, 2) -- (4, 2) -- (4, 3) --
        (0, 3) -- (0, 2) -- (1, 2) -- (1, 0);
  \draw[color=gray,dashed] (1, 0) -- (3, 0);
}

\newcommand{\firstreturnThelper}[1]{
  \ifnum #1>0
    \mid_firstreturn
  
    \edef\level{#1}
    \pgfmathparse{int(\level-1)}
    \edef\level{\pgfmathresult}
    \begin{scope}[xshift=-0.5cm, yshift=3cm]
      \begin{scope}[scale=0.5]
        \firstreturnThelper{\level}
      \end{scope}
    \end{scope}
    \begin{scope}[xshift=2.5cm, yshift=3cm]
      \begin{scope}[scale=0.5]
        \firstreturnThelper{\level}
      \end{scope}
    \end{scope}
  \else
    \topfirstreturn
  \fi
}

\def\topT{
  \draw (3, 0) -- (3, 2) -- (4, 2) -- (4, 3) --
        (0, 3) -- (0, 2) -- (1, 2) -- (1, 0);
}
      
\def\midT{
  \draw (3, 0) -- (3, 2) -- (4, 2) -- (4, 3);
  \draw (3, 3) -- (1, 3);
  \draw (0, 3) -- (0, 2) -- (1, 2) -- (1, 0);
}

\newcommand{\fractalThelper}[1]{
  \ifnum #1>0
    \midT
  
    \edef\level{#1}
    \pgfmathparse{int(\level-1)}
    \edef\level{\pgfmathresult}
    \begin{scope}[xshift=-0.5cm, yshift=3cm]
      \begin{scope}[scale=0.5]
        \fractalThelper{\level}
      \end{scope}
    \end{scope}
    \begin{scope}[xshift=2.5cm, yshift=3cm]
      \begin{scope}[scale=0.5]
        \fractalThelper{\level}
      \end{scope}
    \end{scope}
  \else
    \topT
  \fi
}

\newcommand{\labelT}[3]{
  \ifnum #1>0
  
    \edef\level{#1}
    \pgfmathparse{int(\level-1)}
    \edef\level{\pgfmathresult}
    \edef\scale{#3}
    \pgfmathparse{int(\scale+1)}
    \edef\scale{\pgfmathresult}
    \begin{scope}[yshift=3cm]
      \begin{scope}[xshift=-0.5cm,scale=0.5]
        \labelT{\level}{#2 1}{\scale}
      \end{scope}
      \begin{scope}[xshift=2.5cm,scale=0.5]
        \labelT{\level}{#2 0}{\scale}
      \end{scope}
    \end{scope}
  \else
    \edef\scale{#3}
    \pgfmathparse{0.5^(\scale-1)}
    \edef\scale{\pgfmathresult}
    \node[scale=\scale] at (2, 1) {$#2$};
    \draw[dashed] (1, 0) -- (3, 0);
  \fi

}

\newcommand{\fractalT}[1]{
  \fractalThelper{#1}
  \draw (1, 0) -- (3, 0);
}

\newcommand\labeledT[1]{
  \foreach \n in {1, 2, ..., #1} {
    \labelT{\n}{}{0}
  }
  \node[scale=2] at (2, 1) {$\epsilon$};
}

\def\xs{0.5in}
\def\ys{0.375in}
\def\iterates{8}

\newcommand{\tfractal}{T_\infty}
\newcommand{\tfractalr}{\mathring{T}_\infty}
\newcommand{\tsurf}{\mathcal{T}}
\newcommand{\tsurfr}{\mathring{\mathcal{T}}}

\DeclareMathOperator{\dist}{dist}

\def\N{\mathbb{N}}

\def\esurf{\mathcal{E}}

\def\Q{\mathcal{Q}}
\def\Qr{\mathring{\mathcal{Q}}}
\renewcommand{\phi}{\varphi}

\newtheorem{theorem}{Theorem}[section]
\newtheorem{lemma}[theorem]{Lemma}
\newtheorem{proposition}[theorem]{Proposition}
\newtheorem{corollary}[theorem]{Corollary}

\theoremstyle{definition}
\newtheorem{remark}[theorem]{Remark}
\newtheorem{definition}[theorem]{Definition}

\title{The Wild, Elusive Singularities of the $T$-fractal Surface}
\author{Chris Johnson}
\address{Department of Mathematics and Computer Science, Western
  Carolina University, Cullowhee, NC, USA}
\email{cjohnson@wcu.edu}
\author{Robert Niemeyer}
\address{Department of Mathematics and Statistics, Metropolitan State
  University of Denver, Denver, CO, USA}
\email{niemeye1@msudenver.edu}
\date{\today}

\keywords{Fractal billiards, translation surfaces, wild singularities,
  rational polygonal billiard table, billiard table, conical
  singularity, self-similarity, fractal, $T$-fractal, fractal flat
  surface, elusive singularities.}
\subjclass[2010]{Primary 28A80, 51F99}

\begin{document}
\begin{abstract}
  We give a rigorous definition of the $T$-fractal translation
  surface, and describe some its basic geometric and dynamical
  properties.  In particular, we study the singularities attached to
  the surface by its metric completion and show there exists a Cantor
  set of ``elusive singularities.''  We show these elusive
  singularities can be thought of as a generalization of the wild
  singularities introduced by Bowman and Valdez \cite{BowmanValdez}.
  In particular, we show that every elusive singularities has an
  infinite discrete set of rotational components.
\end{abstract}
\maketitle

\section{Introduction}
\label{sec:intro}

The $T$-fractal billiard has been studied in \cite{LapNie1,LapMilNie}.
In \cite{LapNie1}, one is introduced to the $T$-fractal translation
surface, but details of the construction are not given, especially on
the nature of the elusive singularities.  Such points are called
``elusive'' for the fact that they exist only in the limiting object.
Periodic orbits of the $T$-fractal billiard are described in
\cite{LapNie1} and further results on the nature of orbits of the
$T$-fractal billiard are given in \cite{LapMilNie}, and, most notably,
what is called a nontrivial path with an `irrational direction'
reaching an elusive point of the billiard in a manner which is highly
consistent with an orbit with an initial `rational
direction.'\footnote{`Rational direction' (resp., `irrational
  direction') means that for an initial direction $\theta$ (measured
  relative to some side of the $T$-fractal billiard),
  $\tan\theta\in\mathbb{Q}$ (resp.,
  $\tan\theta\in\mathbb{R}\setminus\mathbb{Q}$).}

In this paper we precisely describe the geometry of the $T$-fractal
translation surface.  Such a surface has finite area and is obtained
by systematically gluing sides of scaled copies of what we have termed
the \emph{quad-T surface}.
Such a surface is constructed from four copies of the first-level
approximation of the $T$-fractal billiard. Given its appearance and
intimate connection to the fractal billiard table, such a surface has
been called a \emph{fractal translation surface.}  We provide a
rigorous study of the singularities of the $T$-fractal translation
surface attached via its metric completion.  These singularities come
in two types: a finite angle conical singularity with cone angle
measuring $6\pi$, which comes from a corner of the fractal billiard
table, and a wild singularity, the set of which does not appear in any
quad-T subsurface, these being referred to as elusive singularities of
the $T$-fractal translation surface.  As we will show in Section
\ref{sec:theMetricCompletionOfTsurf} and state in Proposition
\ref{prop:elusive_cantor}, the set of elusive singularities of the
$T$-fractal translation surface is a Cantor set and the geodesic loops
about finite angle singularities are decreasing in radius.  Such a
geometry thus complicates any attempt to discuss the geodesic flow on
the metric completion.  We will argue that elusive singularities are
\emph{wild singularities}, this being a term introduced in
\cite{BowmanValdez} and that the definition of such can be adjusted to
account for singularities of a fractal translation surface.  Indeed,
in Theorem \ref{thm:every_elusive_linear_approach}, we show that every
elusive singularity has a linear approach, yet the rotational
components of an elusive singularity with irrational address cone
angle zero, as stated in Theorem~\ref{thm:positive_angle_rational} and
Corollary~\ref{cor:ae_rotational_zero_angle}.  However, rotational
components of elusive singularities with rational address may have
positive cone angle.


The paper is organized as follows.  In Section \ref{sec:background},
we discuss the necessary background material on translation surfaces
and polygonal billiards via the first-level approximation of the
$T$-fractal billiard.  We briefly discuss the geometric and analytic
properties of the singularities of a translation surface and provide
an example of a wild singularity (as defined in \cite{BowmanValdez}).
In Section \ref{sec:constructionTheT-FractalFlatSurface}, the
$T$-fractal surface is introduced and formally defined.  While the
idea of such a surface has been discussed in \cite{LapMilNie}, this
paper constitutes the first formal definition of such a surface that
is amenable to rigorous analysis. As previously mentioned, such a
surface will be built from the so-called quad-T surface shown in
Figure \ref{fig:quadt}, this being the focus of Section
\ref{sec:constructionTheT-FractalFlatSurface}.  In
Section~\ref{sec:IETs} we discuss a special interval exchange on the
T-fractal surface which we then use in
Section~\ref{sec:linearApproachesExist} to show that there exists a
linear approach to each elusive singularity of the surface.  In
Section~\ref{sec:theMetricCompletionOfTsurf} we make several
observations about the metric geometry of the $T$-fractal surface
which we use in
Section~\ref{sec:elusiveSingularitiesWildSingularities} to prove that
each elusive singularity is a ``wild singularity,'' mildly
generalizing the notion introduced by Bowman and Valdez.  We conclude the paper with a brief discussion in
Section \ref{sec:finalRemarks}.


\bigskip A glossary of notation is provided in Table
\ref{tbl:notation} so that the reader may more easily determine where
a term was first defined.

\begin{table}
\begin{tabular}{|cp{7.5 cm}c|}	
	\hline
	Notation & \centering{Explanation} & Page reference\\
	\hline
	$\mathring{T}_n$ & The $n$th level $T$-fractal approximation with corners removed& \pageref{ntn:Tnring}\\
	$T_n$ & The $n$th level approximation of the $T$-fractal  & \pageref{ntn:Tn}\\
		$\tfractalr$ & The $T$-fractal with corners removed and no elusive points & \pageref{ntn:tfractalr}\\
	$\tfractal$ & The $T$-fractal& \pageref{ntn:tfractal}\\
	$\tsurfr$ &The $T$-fractal translation surface (no corners and no elusive singularities) & \pageref{ntn:tsurfr} \\
	$\tsurf$ &The metric completion of $\tsurfr$ (contains corners and elusive singularities) & \pageref{ntn:tsurf}\\
	$\Q$ & The quad-$T$ surface& \pageref{ntn:Q}\\
	$\Q_s$ & A quad-$T$ subsurface of $\tsurf$; $\Q$ scaled by $2^{-|s|}$ & \pageref{ntn:Qs} \\
	$\Qr$ & The quad-$T$ surface with corners removed (boundary components remain) & \pageref{ntn:Qr}\\
	$\Qr_s$ &The quad-$T$ subsurface of $\tsurfr$ & \pageref{ntn:Qrs}\\
		$\epsilon$ & The empty string & \pageref{ntn:epsilon} \\
		$\sigma_i^s$ & A boundary component of $\Q^s$ &\pageref{ntn:sigmais}\\
	$\gamma_i^s$ & A boundary component of $\Q^s$ &\pageref{ntn:gammais} \\
	$\mathscr{B}$ & The set $\{0,1\}$ & \pageref{ntn:scriptB}\\
	$\mathscr{B}^*$ & The set of all finite sequences of $0$s and $1$s & \pageref{ntn:scriptBstar}\\
	$\mathcal{B}^s$ & A branch of  $\tsurf$ & \pageref{ntn:branch} \\
	$s\wedge t$ & The longest substring common to both $s$ and $t$ & \pageref{ntn:wedge}\\
	$\esurf$ & The set of elusive singularities &\pageref{ntn:elusiveSingularity}\\
	$\sigma(x)$ & A map $\sigma : \mathcal{B}^\epsilon \to \mathscr{B}^*$ & \pageref{ntn:sigmax}\\
	$\alpha(x)$ & The address of the elusive singularity $x$ & \pageref{ntn:alphax} \\
	$\Gamma$ & The collection of boundary components $\gamma_i$ of $\Q$ & \pageref{ntn:Gamma}\\
	$\Sigma$ & The collection of boundary components $\sigma_i$ of $\Q$ & \pageref{ntn:Sigma}\\
	$F_\theta$ & An interval exchange transformation & \pageref{ntn:FthetaFIET}\\
	$\Phi$ & The renormalization map & \pageref{ntn:PhiRenormalizationMap}\\
	$\nabla(s)$ & The sequence $s$ with the right most bit of $s$ removed & \pageref{ntn:nablas}\\
	$\lambda(s) $ & The right-most bit of $s$ & \pageref{ntn:lambdas} \\
	\hline
\end{tabular}
\caption{A glossary of notation.}
\label{tbl:notation}
\end{table}

\section{Background}
\label{sec:background}
In this section, we recall some necessary background about translation
surfaces and polygonal billiards, as well as establish notational
conventions that will aid us in our analysis and
discussion. Additionally, we describe the $T$-fractal billiard,
setting the stage for Section
\ref{sec:constructionTheT-FractalFlatSurface}.

\subsection{Translation surfaces}
There are many equivalent definitions of a translation surface, but
for the majority of this paper, what is perhaps the simplest
definition will suffice.\footnote{When necessary, we will make use of
  the definition of a translation surface given in terms of
  holomorphic $1$-form on a Riemann surface.}

\begin{definition}
  A \emph{translation surface} $\mathring{X}$ is a surface equipped
  with an atlas of charts where all chart changes are accomplished by
  translations.
\end{definition}


It is known that every translation surface can be equipped with a
natural metric, which is typically incomplete.  We will maintain the
convention in the literature by using $\mathring{X}$ for the initial
surface, and $X$ for its completion.\footnote{$\mathring{X}$
  represents a surface which may have some ``holes'' in it and we use
  the ring above the letter to indicate such.}

One way to construct a translation surface is to consider a collection
of polygons in the plane with corners removed, and edges identified in
pairs such that each edge of a polygon is glued to a parallel
edge of the same length by translation, subject to the condition that
the inward-pointing normal vectors along these edges point in opposite
directions.  

  



Consider the $T$-shaped polygon---hereafter denoted by $T_0$ and $\mathring{T}_0$ when corners are removed---given
in Figure~\ref{fig:tpolygon}.  Four copies of $\mathring{T}_0$ can be arranged
with opposite, parallel sides identified so as to construct a translation
surface, as shown in Figure \ref{fig:t-shapedPolygonFlatSurface}.  The 
\emph{$n$-th level approximation of the $T$-fractal}, denoted \label{ntn:Tn} $T_n$,
is obtained from $T_{n-1}$ by attaching $2^n$ copies of $T_0$, each
scaled by $2^{-n}$, to the top left- and right-hand portions of the
scaled copies of $T_0$ sitting at the top of $T_{n-1}$.  See
Figure~\ref{fig:tfractalIterativeConst} for the cases of $T_0$ through
$T_3$. We denote by \label{ntn:Tnring} $\mathring{T}_n$  the $n$th level approximation of the $T$-fractal billiard $T_n$ with corners removed.  Consequently, \label{ntn:tfractalr} $\tfractalr$ is the $T$-fractal billiard with corners removed (and no elusive points).

  \begin{figure}
    \centering
    \begin{tikzpicture}[scale=2]
      \draw
      (0, 0) --
      (1, 0) --
      (1, 1) --
      (1.5, 1) --
      (1.5, 1.5) --
      (-0.5, 1.5) --
      (-0.5, 1) --
      (0, 1) -- cycle;

      \draw[|<->|] (0, -0.125) -- node[midway, fill=white] {$1$} (1,
      -0.125);
      \draw[|<->|] (1.125, 0) -- node[midway, fill=white] {$1$}
      (1.125, 1);
      \draw[|<->|] (1.625, 1) -- node[midway, fill=white]
      {$\frac{1}{2}$} (1.625, 1.5);
      \draw[|<->|] (-0.5, 1.625) -- node[midway, fill=white] {$2$}
      (1.5, 1.625);
    \end{tikzpicture}
    \caption{The T-shaped polygon, $T_0$. When corners are removed, the set is denoted by $\mathring{T}_0$.}
    \label{fig:tpolygon}
  \end{figure}

\begin{figure}[h!]
  \centering
  \begin{tikzpicture}[scale=0.8]
    \begin{scope}[thick,xshift=1.25in]
      
      \draw (0, 0) --node[left]{$h$} (0, 2) 
                   --  node[below]{$g'$}(-1, 2)
                   --  node[left]{$f$}(-1, 3);
      \draw (0, 3) --node[above]{$e'$} (2, 3);
      \draw (3, 3) --node[right]{$d$} (3, 2)
                   --  node[below]{$c'$}(2, 2)
                   --  node[right]{$b$}(2, 0)--node[below]{$a'$}(0,0);

      \draw (-1, 3) --  (0, 3);
      \draw (2, 3) --  (3, 3);
      \draw (2, 0) --  (0, 0);
    \end{scope}

    \begin{scope}[thick,xshift=-1.25in]
       
      \draw (0, 0) --node[left]{$b$} (0, 2) 
                   --  node[below]{$c$}(-1, 2)
                   --  node[left]{$d$}(-1, 3);
      \draw (0, 3) --node[above]{$e$} (2, 3);
      \draw (3, 3) --node[right]{$f$} (3, 2)
                   --  node[below]{$g$}(2, 2)
                   --  node[right]{$h$}(2, 0)--node[below]{$a$}(0,0);

      \draw (-1, 3) -- (0, 3);
      \draw (2, 3) -- (3, 3);
      \draw (2, 0) --  (0, 0);
    \end{scope}

    \begin{scope}[thick,xshift=-1.25in,yscale=-1, yshift=0.5in]

      \draw (0, 0) --node[left]{$b'$} (0, 2) 
                   --  node[above]{$c$}(-1, 2)
                   --  node[left]{$d'$}(-1, 3);
      \draw (0, 3) --node[below]{$e$} (2, 3);
      \draw (3, 3) --node[right]{$f'$} (3, 2)
                   --  node[above]{$g$}(2, 2)
                   --  node[right]{$h'$}(2, 0)--node[above]{$a$}(0,0);
      \draw (-1, 3) --  (0, 3);
      \draw (2, 3) --  (3, 3);
      \draw (2, 0) -- (0, 0);
    \end{scope}
    
    \begin{scope}[thick,xshift=1.25in,yscale=-1, yshift=0.5in]
             
      \draw (0, 0) --node[left]{$h'$} (0, 2) 
                   --  node[above]{$g'$}(-1, 2)
                   --  node[left]{$f'$}(-1, 3);
      \draw (0, 3) --node[below]{$e'$} (2, 3);
      \draw (3, 3) --node[right]{$d'$} (3, 2)
                   --  node[above]{$c'$}(2, 2)
                   --  node[right]{$b'$}(2, 0)--node[above]{$a'$}(0,0);
      \draw (-1, 3) --  (0, 3);
      \draw (2, 3) --  (3, 3);
      \draw (2, 0) --  (0, 0);
    \end{scope}
  \end{tikzpicture}
  \caption{Four copies of the T-shaped billiard table $\mathring{T}_0$---corners removed, but not shown as such---with opposite
    and parallel sides identified results in a translation surface.}
  \label{fig:t-shapedPolygonFlatSurface}
\end{figure}

  \begin{figure}[h!]
    \begin{center}
      \begin{tikzpicture}
        \begin{scope}[scale=0.5]
          \begin{scope}[xshift=-3in]
            \fractalT{0}
          \end{scope}
          \begin{scope}[xshift=-1in]
            \fractalT{1}
          \end{scope}
          \begin{scope}[xshift=1.25in]
            \fractalT{2}
          \end{scope}
          \begin{scope}[xshift=3.75in]
            \fractalT{3}
          \end{scope}
        \end{scope}
      \end{tikzpicture}
      \caption{The iterative construction of the $T$-fractal billiard table.  Shown here are the prefractal approximations $T_0$, $T_1$, $T_2$ and $T_3$. When corners are removed, the notation is $\mathring{T}_n$, $n=0,1,2,3$.}
      \label{fig:tfractalIterativeConst}
    \end{center}
  \end{figure}

\begin{definition}[$T$-fractal billiard table]
  We define the \emph{$T$-fractal billiard table}, denoted by \label{ntn:tfractal} $\tfractal$,
  as the union of all the $n$-th level approximations,
  \[
    T_\infty = \bigcup_{n = 0}^\infty T_n.
  \]
  See Figure~\ref{fig:tfractal} for an illustration of $\tfractal$.
\end{definition}

\begin{figure}[h!]
  \centering
  \begin{tikzpicture}
    \fractalT{8}
  \end{tikzpicture}
  \caption{The $T$-fractal billiard table, $\tfractal$.}
  \label{fig:tfractal}
\end{figure}

\begin{remark}
  How we have defined $\tfractal$ differs from how the second author
  has defined $\tfractal$ in previous joint papers, e.g.,
  \cite{LapNie1} and \cite{LapMilNie}.  Specifically, in previous
  papers, $\tfractal$ was defined to be the closure of
  $\bigcup_{n=0}^\infty T_n$ and the notation for the billiard table
  was actually $\Omega(\tfractal)$.  The change to simpler notation and
  an alternate description of $\tfractal$ facilitates our construction
  of the $T$-fractal translation surface shown in Figure
  \ref{fig:unfolded_identifications}.  
\end{remark}

%
%

In general, as chart changes are translations, any translation-invariant quantity
defined in the plane can be pulled back to $\mathring{X}$.  In
particular, we can define a measure on the surface by pulling back the
Lebesgue measure of the plane, and a metric by pulling back the
standard Euclidean metric.  This metric space will typically not be
complete, and we are concerned with how the geometry of the surface
extends to those points added by the metric completion, which we will
denote by $X$.

Translations in the plane preserve direction, and so the translation
surface $\mathring{X}$ comes with a well-defined notion of direction.
We may consider geodesic flow on the surface in any given direction,
though we may need to delete a subset of the surface for the flow to
be defined for all time. 

The points of $X \setminus \mathring{X}$ are called
\emph{singularities} of the translation surface $\mathring{X}$ and come in several types which we
may classify by considering the families of geodesics in
$\mathring{X}$ which approach points of $X \setminus \mathring{X}$.
The definitions below are equivalent to those of \cite{BowmanValdez},
but have been modified slightly to better suit the purposes of this
paper.
\begin{definition}[Linear approach]
 A \emph{linear approach} to $x \in X$ is an injective map
 $\gamma : (0, \infty) \to \mathring{X}$ whose image is a geodesic
 segment in $\mathring{X}$ where
 $\displaystyle \lim_{t \to \infty} \gamma(t) = x$.  
\end{definition}

\begin{definition}[Directionally equivalent]
 We will say two
 linear approaches to $x$, $\gamma_1$ and $\gamma_2$, are
 \emph{directionally equivalent} if there exist values $a_1$ and
 $a_2$ such that the image of $(a_1, \infty)$ under $\gamma_1$ equals
 the image of $(a_2, \infty)$ under $\gamma_2$.  That is, the
 geodesic segments given by $\gamma_1$ and $\gamma_2$ approach the
 same point of $X$ from the same direction.  We will let $D[\gamma]$
 denote the directional equivalence class of a linear approach
 $\gamma$.
\end{definition}

Linear approaches to a given point can be divided into several
families where, intuitively, we can rotate one linear approach to $x$
to another, passing through linear approaches to $x$.  To make this
idea precise we must introduce the idea of a sector of a translation surface.
\begin{definition}[Standard sector]
 We will define \emph{the standard sector} of radius $r > 0$ and
 angle $\theta > 0$ as the translation surface $S_{r, \theta}$
 obtained by equipping the open strip
 $(-\log(r), \infty) \times (-\sfrac{\theta}{2}, \sfrac{\theta}{2})$,
 thought of as a subset of the complex plane $\mathbb{C}$, with the
 translation structure obtained by local integration of the 1-form
 $\omega = e^{-z} \, dz$.  We define the sector of angle $\theta = 0$
 and radius $r > 0$, $S_{r, 0}$, as the ray
 $(-\log(r), \infty) \times \{0\}$.  Local integration of
 $e^{-z} \, dz$ gives $S_{r, 0}$ the structure of a translation
 1-manifold.  We define the sector of infinite angle as the
 translation surface $S_{r, \infty}$ obtained by local integration of
 $e^{-z} dz$ in the open half-plane
 $(-\log(r), \infty) \times (-\infty, \infty) \subseteq \mathbb{C}$.
\end{definition}
\begin{definition}[Sector]
 A \emph{sector} of angle $0 \leq \theta \leq \infty$ and radius
 $r > 0$ in a translation surface $\mathring{X}$ is an isometry
 $\psi$ from the standard sector $S_{r, \theta}$ to an open subset of
 $\mathring{X}$.  We will say the sector is \emph{centered} at
 $x \in X$ if $\displaystyle \lim_{z \to \infty} \psi(z) = x$.  We
 will sometimes abuse language and refer to the image of $\psi$ as
 the sector.
\end{definition}

\begin{definition}[Rotationally equivalent]
 \label{def:rotationally_equivalent}
 We will say that two linear approaches $\gamma_1$ and $\gamma_2$ to
 $x \in X$ are \emph{rotationally equivalent} if there exists a
 sector $\psi : S_{r, \theta} \to \mathring{X}$ in $\mathring{X}$,
 centered at $x$, such that for some
 $y_k \in (-\sfrac{\theta}{2}, \sfrac{\theta}{2})$,
 $k = 1, 2$, the map $t \mapsto \psi(t + iy_k)$ is a linear approach
 to $x$ which is directionally equivalent to $\gamma_k$.
\end{definition}

\begin{definition}[Rotational component and cone angle]
 A \emph{rotational component} of $x \in X$ is a rotational
 equivalence class of linear approaches to $x$, and the supremum of
 all angles of sectors in $\mathring{X}$ containing linear approaches
 in that rotational component is the \emph{cone angle} of the
 rotational component.  If a point has only one rotational component,
 then we will sometimes refer to the cone angle of that rotational
 component as the cone angle of the point.
\end{definition}
\begin{definition}
  Let $x \in X \setminus \mathring{X}$ be a singularity of the translation surface $\mathring{X}$. Then, $x$ can be described as one of the following:
  \begin{description}
  \item[Removable singularity] We say that $x$ is a \emph{removable
      singularity} if it has only one rotational component, and that
    rotational component is
    isometric to a circle of circumference $2\pi$.  In this case, a
    neighborhood of $x$ is isometric to a disc in the plane.
  \item[(Finite angle) conical singularity] We say that $x$ is a
    (\emph{finite angle}) \emph{conical
      singularity} if it has only one rotational component, and that
    rotational component is
    isometric to a circle of circumference $2n\pi$ for some positive
    integer $n$.  In this case, a punctured neighborhood of $x$ is
    isometric to a cyclic $n$-cover of the punctured disc.
  \item[Infinite angle conical singularity] We say that $x$ is an
    \emph{infinite angle conical singularity} if it has 
    a punctured neighborhood of $x$ is isometric to an infinite cyclic
    cover of the punctured disc.
  \item[Wild singularity] In all other situations we say $x$ is a
    \emph{wild singularity}.
  \end{description}
\end{definition}

\begin{remark}
  We note that a wild singularity may have rotational components of
  finite cone angle that do not correspond to metric cones embedded in
  the surface.
\end{remark}

A classic example of a removable singularity is the singularity
occurring in a torus constructed from four copies of the unit square
properly identified.  

An example of a finite angle conical singularity
is illustrated in Figure \ref{fig:t-shapedPolygonFlatSurface}. The
cone angle of the singularity arising from the corners of $T_0$
measuring $3\pi/2$ (when measured from within the T-shaped polygon)
measures $6\pi$.  Such a singularity thus constitutes a finite angle
conical singularity of the corresponding translation surface.
(Corners of the T-fractal billiard table with angle $\frac{\pi}{2}$
give rise to removable singularities of the associated translation
surface.)  An
example of an infinite angle conical singularity is seen in the
infinite staircase surfaces studied 
in 
\cite{HubertSchmithuesen}, \cite{HooperWeiss}, \cite{ConzeGutkin},
\cite{HooperHubertWeiss}, and \cite{FraczekUlcigrai}.  

 The translation surface illustrated in
Figure \ref{fig:chamanara} gives an example of a wild singularity and
is due to Chamanara \cite{Chamanara}.   Consider taking a unit square and
cutting each edge into pieces of length $\sfrac{1}{2}$,
$\sfrac{1}{4}$, $\sfrac{1}{8}$, $\ldots$ with parallel edges cut in
opposite orders, and then parallel edges of the same length identified
as indicated in Figure~\ref{fig:chamanara}.  The metric completion of
this surface adds a single point corresponding to the corners of the
square and the endpoints of the cuts, and this is a wild singularity:
no neighborhood of this point can be isometric to a cover of the
punctured disc as there are arbitrarily short geodesic loops based at
the singularity.

  \begin{figure}
    \begin{tikzpicture}[scale=2]
      \draw (-1, -1) rectangle (1, 1);
      \foreach [count=\x] \n in {A, B, ..., H} {
        \foreach \y in {1, -1} {
          \begin{scope}[yscale=\y,xscale=\y]
            \filldraw ($(1-2*1/2^\x, 1)$) circle (0.005in);
          \end{scope}
        }
      }
      \foreach [count=\x] \n in {A, B, ..., D} {
        \foreach \y in {1, -1} {
          \begin{scope}[yscale=\y,xscale=\y]
            \node at ($(1-3/2^\x, 1.1)$) {\n};
          \end{scope}
        }
      }
      \foreach [count=\x] \n in {a, b, ..., h} {
        \foreach \y in {1, -1} {
          \begin{scope}[yscale=\y,xscale=\y]
            \filldraw ($(1, 1-2*1/2^\x)$) circle (0.005in);
          \end{scope}
        }
      }
      \foreach [count=\x] \n in {a, b, ..., d} {
        \foreach \y in {1, -1} {
          \begin{scope}[yscale=\y,xscale=\y]
            \node at ($(1.1, 1-3/2^\x)$) {\n};
          \end{scope}
        }
      }
    \end{tikzpicture}
    \caption{The Chamanara surface. Parallel edges of the same length
      are identified.  The first few horizontal and vertical
      identifications are labeled in this diagram.}
    \label{fig:chamanara}
  \end{figure}

\begin{remark}
In \cite{BowmanValdez}, the set of wild singularities is assumed to be
discrete to avoid trivial examples of wild singularities such as the
boundary points on the unit disc. In this paper we remove this
restriction, because, as we will see, a non-discrete set of points
naturally arising from the metric completion of the unfolding of a
billiard table forms a set of singularities with infinitely-many
rotational components. The authors believe such singularities are
still worthy of being called \emph{wild singularities}, as will be discussed in Section \ref{sec:elusiveSingularitiesWildSingularities}.

	
\end{remark}

\section{An explicit construction of the $T$-fractal translation surface}
\label{sec:constructionTheT-FractalFlatSurface}

\subsection{The $T$-fractal billiard table and $T$-fractal translation surface}
\label{subsec:TfractalBilliardTfractalSurface}

Though $\tfractalr$ is not strictly a polygonal region, the
``unfolding procedure'' described by \cite{FoxKershner} and
\cite{KatokZemlyakov} may still be applied to obtain a translation
surface whose geodesics project to billiard trajectories in
$\tfractalr$.  In particular, $\tfractal$ and $\tfractalr$ consist
only of horizontal and vertical edges,\footnote{$\tfractal$ is
  $\tfractalr$ with corners from each $T_n$ included.}  so the group
generated by linear reflections in its sides is the group generated by
two orthogonal reflections, the Klein four-group,
$\mathbb{Z}_2 \oplus \mathbb{Z}_2$.

\begin{definition}[The $T$-fractal translation surface]
As before, let $\tfractalr$ denote the fractal billiard $\tfractal$ with
corners removed.  \label{ntn:tsurfr} Then $\tsurfr$ denotes the fractal translation
surface and is constructed from four copies of $\tfractalr$
by properly identifying horizontal and vertical sides as shown in
Figure \ref{fig:unfolded_identifications}.

\end{definition}

  \begin{figure}
    \centering
    \begin{tikzpicture}[scale=0.75]
      \begin{scope}[xshift=\xs,yshift=\ys]
        \fractalT{\iterates}
        \node[below] at (2, 0) {$a$};
        \node[right] at (3, 1) {$b$};
        \node[left] at (1, 1) {$c$};
        \node[below] at (3.5, 2) {$d$};
        \node[below] at (0.5, 2) {$e$};
      \end{scope}

      \begin{scope}[xshift=-\xs,yshift=\ys,xscale=-1]
        \fractalT{\iterates}
        \node[below] at (2, 0) {$h$};
        \node[left] at (3, 1) {$b$};
        \node[right] at (1, 1) {$c$};
        \node[below] at (3.5, 2) {$f$};
        \node[below] at (0.5, 2) {$g$};
      \end{scope}

      \begin{scope}[xshift=-\xs,yshift=-\ys,yscale=-1,xscale=-1]
        \fractalT{\iterates}
        \node[above] at (2, 0) {$h$};
        \node[above] at (3.5, 2) {$f$};
        \node[above] at (0.5, 2) {$g$};
        \node[left] at (3, 1) {$i$};
        \node[right] at (1, 1) {$j$};
      \end{scope}

      \begin{scope}[xshift=\xs,yshift=-\ys,yscale=-1]
        \fractalT{\iterates}
        \node[above] at (2, 0) {$a$};
        \node[above] at (3.5, 2) {$d$};
        \node[above] at (0.5, 2) {$e$};
        \node[right] at (3, 1) {$i$};
        \node[left] at (1, 1) {$j$};
      \end{scope}
    \end{tikzpicture}
    \caption{The surface $\tsurfr$.  Only the first few
      edges in the copies of the billiard table $\tfractal$ are labeled
      to indicate how sides are glued together to form $\tsurfr$.}
    \label{fig:unfolded_identifications}
  \end{figure}

The surface $\tsurfr$ is not a complete metric space, and so we
consider its metric completion which we will denote
by \label{ntn:tsurf} $\tsurf$, where the metric $d: \tsurf\times
\tsurf \to \mathbb{R}$ is the completion of the Riemannian metric
$|dz|^2 = dx^2 + dy^2$ from $\mathbb{R}^2$ pulled back to $\tsurfr$
using local charts.  A
first observation about $\tsurf$ is that it contains infinitely-many
cone points of cone angle $6\pi$ coming from the corners of $T_n$
where the scaled $2 \times \sfrac{1}{2}$ rectangle is placed on top of
the scaled $1 \times 1$ square.  The singular points in $\tsurf
\setminus \tsurfr$ which are more interesting, however, are those
points which come from the ``top'' of the $T$-fractal.  In previous
joint works of the second author, M. L. Lapidus and R. L. Miller,
these were called the \emph{elusive points} of the $T$-fractal
billiard,\footnote{The elusive points of the $T$-fractal billiard, in
our new notation, are $\overline{\tfractal}\setminus \tfractal$.} and
so we will refer to these as \emph{elusive singularities} of the
$T$-fractal translation surface $\tsurfr$.  (A precise definition of
elusive singularities is given below.)

\begin{remark}

 In previous articles (e.g., \cite{LapNie1} and \cite{LapMilNie}), the set of elusive points of the $T$-fractal billiard was a subset of the
  $T$-fractal billiard and constituted a connected interval.  Such
  points are now viewed as elements in  $\overline{\tfractal}\setminus \tfractal$.
  What is shown in this article is that the set of elusive
  singularities of the $T$-fractal translation surface (the fractal analog of a translation surface) is a totally disconnected
  set and is only present in the metric completion of the $T$-fractal
  surface.  Moreover, the metric completion of the $T$-fractal surface is in
  fact not a surface in the mathematical sense of the word `surface'.  
  
 \end{remark}
  
  Since $\tsurf$ is the metric completion of $\tsurfr$, it follows that $\tsurf$ is not simply four copies of $\overline{\tfractal}$ properly identified, as this would imply that the elusive singularities fo the T-fractal translation surface form a connected set, contrary to what we show later.  We argue that viewing $\tsurf$ as four copies of $\overline{\tfractal}$ identified by horizontal and vertical reflections and translations is incorrect, this being the proposed construction in previous articles by the second author.\footnote{In \cite{LapMilNie}, it was proposed that four copies of $\tsurfr$ could be properly identified to construct a reasonable notion of a $T$-fractal translation surface.  As will be shown, the appropriate fractal  analog of a translation surface corresponding to the $T$-fractal billiard is constructed using scaled copies of a particular surface we will call the quad-$T$ surface $\mathscr{Q}$.}

In order to study the elusive singularities of $\tsurfr$ we will
consider a special class of compact translation surfaces with boundary which are embedded in $\tsurf$ and which we call \emph{quad-T subsurfaces}.

\subsection{The quad-T subsurfaces}
Just as we imagine the $T$-fractal billiard $\tfractal$ as being built
from scaled copies of the original T-shaped polygon, we may imagine $\tsurfr$ as being built from scaled copies of a certain
translation surface with boundary indicated in Figure~\ref{fig:quadt}.
We call the surface displayed in  Figure~\ref{fig:quadt} a \emph{quad-T
  surface}.  When corners are absent from the quad-T surface, we denote the surface in Figure~\ref{fig:quadt} by \label{ntn:Qr} $\Qr$ and when corners are present, the notation used is \label{ntn:Q} $\Q$.  Then, $\tsurf$ is obtained by computing the metric completion of appropriately scaled copies of $\Q$ glued together at their boundary components.  To distinguish the different copies of the quad-T surface in $\tsurf$, we index the copies by binary strings.

Let \label{ntn:scriptB} $\mathscr{B} = \{0, 1\}$, and let \label{ntn:scriptBstar} $\mathscr{B}^*$ be the set of all finite binary strings with \label{ntn:epsilon} $\epsilon$ denoting the empty string.  For each string $s \in \mathscr{B}^*$, let \label{ntn:Qrs} $\Qr^s$ \label{ntn:Qs} ($\Q^s$) denote a copy of $\Qr$ ($\Q$) scaled by $2^{-|s|}$ where $|s|$ is the length of the string $s$. Labeling the boundary components of $\Q$ as indicated in Figure~\ref{fig:quadt}, we let \label{ntn:gammais} \label{ntn:sigmais} $\gamma_1^s, \ldots, \gamma_6^s, \sigma_1^s, \ldots, \sigma_6^s$ denote the corresponding boundary components of $\Q^s$.  We note that when corners are removed from $\Q^s$ to produce $\Qr^s$, endpoints of boundary components are also removed, this being important when constructing a \textit{fractal interval exchange transformation} in Section \ref{sec:IETs} and Subsection \ref{subsec:FIET_irrational_direction}.

\def\gammaColor{dashed}
\def\sigmaColor{dashed}

\begin{figure}[h!]
  \centering
  \begin{tikzpicture}[scale=0.66]
    \begin{scope}[thick,xshift=1.25in]
      \draw (0, 0) -- node[scale=0.66,left] {$E$} (0, 2) 
                   -- node[scale=0.66,below] {$F$} (-1, 2)
                   -- node[scale=0.66,left] {$G$} (-1, 3);
      \draw (0, 3) -- node[scale=0.66,above] {$A$} (2, 3);
      \draw (3, 3) -- node[scale=0.66,right] {$B$} (3, 2)
                   -- node[scale=0.66,below] {$C$} (2, 2)
                   -- node[scale=0.66,right] {$D$} (2, 0);

      \draw[\sigmaColor] (-1, 3) -- node[above] {$\sigma_4$} (0, 3);
      \draw[\sigmaColor] (2, 3) -- node[above] {$\sigma_6$} (3, 3);
      \draw[\gammaColor] (2, 0) -- node[below] {$\gamma_5$} (0, 0);
    \end{scope}

    \begin{scope}[thick,xshift=-1.25in]
      \draw (0, 0) -- node[scale=0.66,left] {$D$} (0, 2)
                   -- node[scale=0.66,below] {$J$} (-1, 2)
                   -- node[scale=0.66,left] {$B$} (-1, 3);
      \draw (0, 3) -- node[scale=0.66,above] {$H$} (2, 3);
      \draw (3, 3) -- node[scale=0.66,right] {$G$} (3, 2)
                   -- node[scale=0.66,below] {$I$} (2, 2)
                   -- node[scale=0.66,right] {$E$} (2, 0);

      \draw[\sigmaColor] (-1, 3) -- node[above] {$\sigma_1$}(0, 3);
      \draw[\sigmaColor] (2, 3) -- node[above] {$\sigma_3$} (3, 3);
      \draw[\gammaColor] (2, 0) -- node[below] {$\gamma_2$} (0, 0);
    \end{scope}

    \begin{scope}[thick,xshift=-1.25in,yscale=-1, yshift=0.5in]
      \draw (0, 0) -- node[scale=0.66,left] {$N$} (0, 2)
                   -- node[scale=0.66,above] {$J$} (-1, 2)
                   -- node[scale=0.66,left] {$M$} (-1, 3);
      \draw (0, 3) -- node[scale=0.66,below] {$H$} (2, 3);
      \draw (3, 3) -- node[scale=0.66,right] {$L$} (3, 2)
                   -- node[scale=0.66,above] {$I$} (2, 2)
                   -- node[scale=0.66,right] {$K$} (2, 0);

      \draw[\gammaColor] (-1, 3) -- node[below] {$\gamma_1$} (0, 3);
      \draw[\gammaColor] (2, 3) -- node[below] {$\gamma_3$} (3, 3);
      \draw[\sigmaColor] (2, 0) -- node[above] {$\sigma_2$} (0, 0);
    \end{scope}
    
    \begin{scope}[thick,xshift=1.25in,yscale=-1, yshift=0.5in]
      \draw (0, 0) -- node[scale=0.66,left] {$K$} (0, 2)
                   -- node[scale=0.66,above] {$F$} (-1, 2)
                   -- node[scale=0.66,left] {$L$} (-1, 3);
      \draw (0, 3) -- node[scale=0.66,below] {$A$} (2, 3);
      \draw (3, 3) -- node[scale=0.66,right] {$M$} (3, 2)
                   -- node[scale=0.66,above] {$C$} (2, 2)
                   -- node[scale=0.66,right] {$N$} (2, 0);

      \draw[\gammaColor] (-1, 3) -- node[below] {$\gamma_4$} (0, 3);
      \draw[\gammaColor] (2, 3) -- node[below] {$\gamma_6$} (3, 3);
      \draw[\sigmaColor] (2, 0) -- node[above] {$\sigma_5$} (0, 0);
    \end{scope}
  \end{tikzpicture}
  \caption{The quad-T surface, $\Q$.  Dashed line segments are
    boundary components, and solid line segments with the same label
    are identified by translation as indicated by the letters $A-M$ so
    that the orientation of the flow is preserved.}
  \label{fig:quadt}
\end{figure}

We may build up to  $\tsurf$ by gluing together particular
quad-T surfaces at their boundary components.  More to the point, if
for some $i_1$ and $i_2$, $\gamma_{i_1}^s$ and $\sigma_{i_2}^s$ are
boundary components of $\Qr^s$, then $\gamma_{i_1}^s$ and
$\sigma_{i_2}^s$ are glued to $\sigma_{j_1}^{s'}$ and
$\gamma_{j_2}^{s'}$, respectively, where $|s'| = |s|+1$. Such a
construction preserves the orientation of the flow and, consequently,
gives us a translation surface.  The initial stages of the
construction of $\tsurf$ (or $\tsurfr$ when corners are omitted) are shown in Figure \ref{fig:QepsilongQ0Q1}.

\def\gammaColor{dashed}
\def\sigmaColor{dashed}

\begin{figure}[h!]
  \centering
  \begin{tikzpicture}[scale=0.66]
    \begin{scope}[thick,xshift=1.25in]
      \draw (0, 0) -- node[scale=0.66,left] {$E$} (0, 2) 
                   -- node[scale=0.66,below] {$F$} (-1, 2)
                   -- node[scale=0.66,left] {$G$} (-1, 3);
      \draw (0, 3) -- node[scale=0.66,above] {$A$} (2, 3);
      \draw (3, 3) -- node[scale=0.66,right] {$B$} (3, 2)
                   -- node[scale=0.66,below] {$C$} (2, 2)
                   -- node[scale=0.66,right] {$D$} (2, 0);

      \draw[\sigmaColor] (-1, 3) -- node[above] {$\sigma_4$} (0, 3);
      \draw[\sigmaColor] (2, 3) -- node[above] {$\sigma_6$} (3, 3);
      \draw[\gammaColor] (2, 0) -- node[below] {$\gamma_5$} (0, 0);
    \end{scope}
\begin{scope}[scale=.5,thick,xshift=1.7in,yshift=3.5in]
      \draw (0, 0) --  (0, 2) 
                   --  (-1, 2)
                   -- (-1, 3);
      \draw (0, 3) -- (2, 3);
      \draw (3, 3) --  (3, 2)
                   --  (2, 2)
                   -- (2, 0);

      \draw[\sigmaColor] (-1, 3) -- node[above] {$\sigma_4^1$} (0, 3);
      \draw[\sigmaColor] (2, 3) -- node[above] {$\sigma_6^1$} (3, 3);
      \draw[\gammaColor] (2, 0) -- node[below] {$\gamma_5^1$} (0, 0);
    \end{scope}

\begin{scope}[scale=.5,thick,xshift=4in,yshift=3.5in]
      \draw (0, 0) --  (0, 2) 
                   --  (-1, 2)
                   -- (-1, 3);
      \draw (0, 3) -- (2, 3);
      \draw (3, 3) --  (3, 2)
                   --  (2, 2)
                   -- (2, 0);

      \draw[\sigmaColor] (-1, 3) -- node[above] {$\sigma_4^0$} (0, 3);
      \draw[\sigmaColor] (2, 3) -- node[above] {$\sigma_6^0$} (3, 3);
      \draw[\gammaColor] (2, 0) -- node[below] {$\gamma_5^0$} (0, 0);
    \end{scope}

    \begin{scope}[thick,xshift=-1.25in]
      \draw (0, 0) -- node[scale=0.66,left] {$D$} (0, 2)
                   -- node[scale=0.66,below] {$J$} (-1, 2)
                   -- node[scale=0.66,left] {$B$} (-1, 3);
      \draw (0, 3) -- node[scale=0.66,above] {$H$} (2, 3);
      \draw (3, 3) -- node[scale=0.66,right] {$G$} (3, 2)
                   -- node[scale=0.66,below] {$I$} (2, 2)
                   -- node[scale=0.66,right] {$E$} (2, 0);

      \draw[\sigmaColor] (-1, 3) -- node[above] {$\sigma_1$}(0, 3);
      \draw[\sigmaColor] (2, 3) -- node[above] {$\sigma_3$} (3, 3);
      \draw[\gammaColor] (2, 0) -- node[below] {$\gamma_2$} (0, 0);
    \end{scope}

\begin{scope}[scale=.5,thick,xshift=-.95in,yshift=3.5in]
      \draw (0, 0) --  (0, 2) 
                   --  (-1, 2)
                   -- (-1, 3);
      \draw (0, 3) -- (2, 3);
      \draw (3, 3) --  (3, 2)
                   --  (2, 2)
                   -- (2, 0);

      \draw[\sigmaColor] (-1, 3) -- node[above] {$\sigma_1^1$} (0, 3);
      \draw[\sigmaColor] (2, 3) -- node[above] {$\sigma_3^1$} (3, 3);
      \draw[\gammaColor] (2, 0) -- node[below] {$\gamma_2^1$} (0, 0);
    \end{scope}

\begin{scope}[scale=.5,thick,xshift=-3.3in,yshift=3.5in]
      \draw (0, 0) --  (0, 2) 
                   --  (-1, 2)
                   -- (-1, 3);
      \draw (0, 3) -- (2, 3);
      \draw (3, 3) --  (3, 2)
                   --  (2, 2)
                   -- (2, 0);

      \draw[\sigmaColor] (-1, 3) -- node[above] {$\sigma_1^0$} (0, 3);
      \draw[\sigmaColor] (2, 3) -- node[above] {$\sigma_3^0$} (3, 3);
      \draw[\gammaColor] (2, 0) -- node[below] {$\gamma_2^0$} (0, 0);
    \end{scope}

    \begin{scope}[thick,xshift=-1.25in,yscale=-1, yshift=0.5in]
      \draw (0, 0) -- node[scale=0.66,left] {$N$} (0, 2)
                   -- node[scale=0.66,above] {$J$} (-1, 2)
                   -- node[scale=0.66,left] {$M$} (-1, 3);
      \draw (0, 3) -- node[scale=0.66,below] {$H$} (2, 3);
      \draw (3, 3) -- node[scale=0.66,right] {$L$} (3, 2)
                   -- node[scale=0.66,above] {$I$} (2, 2)
                   -- node[scale=0.66,right] {$K$} (2, 0);

      \draw[\gammaColor] (-1, 3) -- node[below] {$\gamma_1$} (0, 3);
      \draw[\gammaColor] (2, 3) -- node[below] {$\gamma_3$} (3, 3);
      \draw[\sigmaColor] (2, 0) -- node[above] {$\sigma_2$} (0, 0);
    \end{scope}
    
     \begin{scope}[scale=.5, thick,xshift=-.95in,yscale=-1, yshift=4.55in]
      \draw (0, 0) -- (0, 2)
                   --  (-1, 2)
                   --  (-1, 3);
      \draw (0, 3) --  (2, 3);
      \draw (3, 3) --  (3, 2)
                   --  (2, 2)
                   --  (2, 0);

      \draw[\gammaColor] (-1, 3) -- node[below] {$\gamma_1^1$} (0, 3);
      \draw[\gammaColor] (2, 3) -- node[below] {$\gamma_3^1$} (3, 3);
      \draw[\sigmaColor] (2, 0) -- node[above] {$\sigma_2^1$} (0, 0);
    \end{scope}

     \begin{scope}[scale=.5, thick,xshift=-3.3in,yscale=-1, yshift=4.55in]
      \draw (0, 0) --  (0, 2)
                   --  (-1, 2)
                   -- (-1, 3);
      \draw (0, 3) --  (2, 3);
      \draw (3, 3) --  (3, 2)
                   -- (2, 2)
                   -- (2, 0);

      \draw[\gammaColor] (-1, 3) -- node[below] {$\gamma_1^0$} (0, 3);
      \draw[\gammaColor] (2, 3) -- node[below] {$\gamma_3^0$} (3, 3);
      \draw[\sigmaColor] (2, 0) -- node[above] {$\sigma_2^0$} (0, 0);
    \end{scope}

    \begin{scope}[thick,xshift=1.25in,yscale=-1, yshift=0.5in]
      \draw (0, 0) -- node[scale=0.66,left] {$K$} (0, 2)
                   -- node[scale=0.66,above] {$F$} (-1, 2)
                   -- node[scale=0.66,left] {$L$} (-1, 3);
      \draw (0, 3) -- node[scale=0.66,below] {$A$} (2, 3);
      \draw (3, 3) -- node[scale=0.66,right] {$M$} (3, 2)
                   -- node[scale=0.66,above] {$C$} (2, 2)
                   -- node[scale=0.66,right] {$N$} (2, 0);

      \draw[\gammaColor] (-1, 3) -- node[below] {$\gamma_4$} (0, 3);
      \draw[\gammaColor] (2, 3) -- node[below] {$\gamma_6$} (3, 3);
      \draw[\sigmaColor] (2, 0) -- node[above] {$\sigma_5$} (0, 0);
    \end{scope}
    
    \begin{scope}[scale=.5,thick,xshift=1.75in,yscale=-1, yshift=4.55 in]
      \draw (0, 0) -- (0, 2)
                   --  (-1, 2)
                   --  (-1, 3);
      \draw (0, 3) --  (2, 3);
      \draw (3, 3) -- (3, 2)
                   --  (2, 2)
                   --  (2, 0);

      \draw[\gammaColor] (-1, 3) -- node[below] {$\gamma_4^1$} (0, 3);
      \draw[\gammaColor] (2, 3) -- node[below] {$\gamma_6^1$} (3, 3);
      \draw[\sigmaColor] (2, 0) -- node[above] {$\sigma_5^1$} (0, 0);
    \end{scope}

    \begin{scope}[scale=.5,thick,xshift=4in,yscale=-1, yshift=4.55in]
      \draw (0, 0) -- (0, 2)
                   --  (-1, 2)
                   --  (-1, 3);
      \draw (0, 3) --  (2, 3);
      \draw (3, 3) -- (3, 2)
                   --  (2, 2)
                   --  (2, 0);

      \draw[\gammaColor] (-1, 3) -- node[below] {$\gamma_4^0$} (0, 3);
      \draw[\gammaColor] (2, 3) -- node[below] {$\gamma_6^0$} (3, 3);
      \draw[\sigmaColor] (2, 0) -- node[above] {$\sigma_5^0$} (0, 0);
    \end{scope}
  \end{tikzpicture}
  \caption{We show here the quad-T subsurfaces $\Q^\epsilon$, $\Q^0$ and $\Q^1$ as they would appear when embedded in $\tsurf$ (or, how $\Qr^\epsilon$, $\Qr^0$ and $\Qr^1$ would appear when embedded in $\tsurfr$). We also label the boundary components of each scaled copy of $\Q$ so that the reader can better visualize the gluings that give rise to the quad-T construction of $\tsurf$ (or, $\tsurfr$).}
  \label{fig:QepsilongQ0Q1}
\end{figure}

Precisely,
we identify $\gamma_2^\epsilon \sim \sigma_2^\epsilon$ and
$\gamma_5^\epsilon \sim \sigma_5^\epsilon$, and then for each nonempty string
$s$ we perform the following identifications:
\[
\begin{array}{cc}
  \sigma_1^s \sim \gamma_2^{s0}, &
  \sigma_6^s \sim \gamma_5^{s0}, \\
  \gamma_1^s \sim \sigma_2^{s0}, &
  \gamma_6^s \sim \sigma_5^{s0}, \\
  \sigma_3^s \sim \gamma_2^{s1}, &
  \sigma_4^s \sim \gamma_5^{s1}, \\
  \gamma_3^s \sim \sigma_2^{s1}, &
  \gamma_4^s \sim \sigma_5^{s1}.
\end{array}
\]
All identifications are translations between parallel line segments of
equal length, and the above identifications give the surface
$\left(\bigcup_{s\in \mathscr{B}^*}{\Q^s}\right)/\sim$.  Such a surface is equal to four copies of $T_\infty$ properly identified (i.e., by the the action of $Z_2\oplus Z_2$ on $T_\infty$).  As previously mentioned, however, the metric completion of $T_\infty$ acted on by the Klein group is not the metric completion of $\bigcup_{s\in \mathscr{B}^*} Q^s/\sim$.   We may  think of each $\Q^s$ as being
embedded in $\tsurf$ and $\Qr^s$ being embedded in $\tsurfr$; see Figure~\ref{fig:labels}.  We close this section with a useful definition and notation that will be used in the subsequent sections.


\begin{figure}
  \centering
  \begin{tikzpicture}[scale=0.75]
    \begin{scope}[xshift=\xs,yshift=\ys]
      \fractalT{\iterates}
      \labeledT{\iterates}
    \end{scope}

    \begin{scope}[xshift=-\xs,yshift=\ys,xscale=-1]
      \fractalT{\iterates}
      \labeledT{\iterates}
    \end{scope}

    \begin{scope}[xshift=-\xs,yshift=-\ys,yscale=-1,xscale=-1]
      \fractalT{\iterates}
      \labeledT{\iterates}
    \end{scope}

    \begin{scope}[xshift=\xs,yshift=-\ys,yscale=-1]
      \fractalT{\iterates}
      \labeledT{\iterates}
    \end{scope}
  \end{tikzpicture}
  \caption{The scaled quad-T surfaces which appear in $\tsurf$.}
  \label{fig:labels}
\end{figure} 

\begin{definition}[Branch of $\tsurf$ rooted at $\Q^s$]
  For each $s\in \mathscr{B}^*$ we define a \emph{branch of $\tsurf$
    rooted at $\Q^s$}, denoted \label{ntn:branch} $\mathcal{B}^s$, to be the union of all
  quad-T subsurfaces whose indexing string contains $s$ as a
  substring,
  \[
    \mathcal{B}^s = \bigcup_{t \in \mathscr{B}^*} \Q^{st}/\sim.
  \]

\noindent For notational purposes, 
\[
\mathring{\mathcal{B}}^s = \bigcup_{t\in\mathscr{B}^*} \Qr^{st}/\sim,
\]
\noindent and $\mathring{\mathcal{B}}^s$ is referred to as a branch of $\tsurfr$ rooted at $\Qr^s$.  
\end{definition}

As expected, $\mathcal{B}^\epsilon = \bigcup_{t\in\mathscr{B}^*} \Q^{t}/\sim$.  We now define a \textit{truncated branch of $\tsurf$} as follows.

\begin{definition}
	For each $n\in\N$ and $s\in\mathscr{B}^*$, we define a \textit{truncated branch of $\tsurf$ rooted at $\Q^s$}, denoted by $\mathcal{B}^s_n$, as the following finite union of quad-T subsurfaces:
	
	\[
		\mathcal{B}^s_n = \bigcup_{t\in\mathscr{B}^*, |t|\leq n} \Q^{st}/\sim.
	\]
\end{definition}

While $\mathcal{B}^s$ and $\mathring{\mathcal{B}}^s$ are surfaces
without boundary, a truncated branch of T does have boundary, since
$\gamma_i^{st}$, $i= 1, 3, 4, 6$, and $\sigma_i^{st}$,
$i= 1, 3, 4, 6$, of $\Q^{st}$, $|t| = n$ will not be glued to any
other segments, this being a key difference between a branch and a
truncated branch.

\subsection{Elusive singularities}
In order to understand the points added by the metric completion of
$\tsurfr$, we will show that equivalence classes of Cauchy sequences
on $\tsurfr$ which do not converge to a point of
$\mathcal{B}^\epsilon$ may be thought of as increasing sequences of
quad-T subsurfaces.

\begin{definition}
  We define an \emph{elusive
    singularity}\label{ntn:elusiveSingularity} of $\tsurfr$ to be a
  point of $\tsurf$ which is not contained in any quad-T subsurface
  $\Q^s$ and denote the set of elusive singularities $\esurf$ as
  \[
    \esurf \coloneqq \tsurf \setminus \mathcal{B}^\epsilon.
  \]
\end{definition}

Intuitively, elusive singularities of $\tsurfr$ correspond to points
in (the unfolding of) the $T$-fractal billiard table, $\tfractal$, but
which do not appear in (the unfolding of) any finite approximation,
$T_n$.  This analogy does not hold rigorously, as the set of elusive
points will be shown to be a Cantor set in Theorem
\ref{prop:elusive_cantor}.

\begin{definition}[Address of an elusive singularity]
  \label{def:addressOfElusivePoint}
  Consider $x\in \esurf=\tsurf\setminus\mathcal{B}^\epsilon$ and
  $(x_n)_{n\in\N}$ the Cauchy sequence in $\tsurfr$ converging to $x$
  with the property that $|\sigma(x_n)|$ is a strictly increasing
  sequence.  The \textit{address of $x$}, denoted by \label{ntn:alphax}
  $\alpha(x)$ is then given by the bits $b_i$, where for each $i$,
  $b_{i+1}$ is the bit appended to $\sigma(x_i)$ to produce
  $\sigma(x_{i+1})$, meaning that
  $\sigma(x_{i+1}) = \sigma(x_i)b_{i+1}$.
\end{definition}

In order to understand the geodesic rays which approach elusive
singularities, it will be convenient to study some simple dynamical
properties of a special type of \emph{interval exchange
  transformation} associated with the surface.

\section{Interval exchange transformations}
\label{sec:IETs}
In this section we briefly discuss specific special interval exchange
transformations on the T-fractal surface.  We will use the
transformations discussed in this section in Section
\ref{sec:linearApproachesExist} to show that for each address of an
elusive singularity there exists a linear approach to that
singularity.

An \emph{interval exchange transformation}, or IET, is simply a map
from an interval to itself which is an injective piecewise
translation.  That is, the interval is partitioned into subintervals
which are then rearranged by translations in such a way that the
images of the subintervals again give a partition of the original
interval.  IETs naturally arise in the study of translation surfaces
as Poincar\'e sections of geodesic flows.  That is, fixing any
direction on the surface and any transverse geodesic interval, the
first return map to the interval by the geodesic flow in the fixed
direction is an interval exchange.

Typically, IETs are the first return map from a single connected
interval to itself, but we may also consider the first return map to a
collection of disjoint intervals.  This will still be an interval
exchange, possibly moving subintervals between these disjoint
intervals, if the intervals are all parallel.

In our setting, we will consider intervals which come from the
bottom-most horizontal edges of a quad-T subsurface.  In
Figure~\ref{fig:labels}, these are the dashed intervals together with
horizontal intervals in the $\mathcal{Q}^\epsilon$ subsurface which
appear at the bottom of the two T's in the top half of the picture,
and the intervals at the top of the two T's in the bottom half of the
picture.

In order to study this interval exchange we make use of the
self-similarity of the $T$-fractal.\footnote{To be precise, the
  $T$-fractal is not, strictly speaking, self-similar in the sense
  that it is the unique fixed point attractor of an iterated function
  system.  Rather, the base $T$ shape is repeated at smaller and
  smaller scales with increasing frequency, giving the feeling of a
  fractal.}  In particular, we note that since the way points move
from the interval at the base of $\mathcal{Q}^s$ is the same as the
way they move from points on any other $\mathcal{Q}^t$.  To be more
precise, we consider the interval exchange defined on the quad-T
subsurfaces as follows.


We partition the boundaries of the quad-T $\mathcal{Q}$ into two
halves which we call \label{ntn:Gamma} $\Gamma$ and \label{ntn:Sigma}
$\Sigma$, with $\Gamma = \{\gamma_1, \ldots, \gamma_6\}$ and
$\Sigma = \{\sigma_1,\ldots , \sigma_6\}$ using the subintervals
indicated in Figure~\ref{fig:quadt}.  For each direction
$\theta \in (0, \pi)$ we consider the map \label{ntn:FthetaFIET}
$F_\theta : \Gamma \to \Sigma$ defined by following the geodesic ray
emitted from $x \in \Gamma$ in direction $\theta$ until reaching
$F_\theta(x) \in \Sigma$.  Noting that $\Gamma$ and $\Sigma$ each
consists of six intervals of the same sizes (i.e.,
$|\gamma_i| = |\sigma_i|$), we may interpret $F_\theta$ as an interval
exchange on one interval $I$.  Assigning coordinates to $\mathcal{Q}$
in the natural way so that the left-most points (the corners of
$\gamma_1$ and $\sigma_1$) have $x$-coordinate $0$, we identify $I$
with the interval $I = [0, 4]$.  It is clear that the interval
exchange on $I$ can be computed by simply following the geodesics
emitted in directions $\theta$, $\pi+\theta$ from the conical
singularities to $\Gamma \sqcup \Sigma$ to first determine the
discontinuities of $F_\theta$, and the subintervals of $\Gamma$
between two such rays are mapped by translation to $\Sigma$.  See
Figure~\ref{fig:computingIET} for an example in the case of direction
$\theta = \frac{\pi}{4}$.

\begin{figure}
  \centering
  \begin{tikzpicture}
    \begin{scope}[thick,xshift=1in]
      \filldraw[color=red,opacity=0.2] (0, 1) -- (0, 2) -- (1, 3) --
      (2, 3) -- cycle;
      \filldraw[color=red,opacity=0.2] (2, 2) -- (3, 3) -- (3, 2) --
      cycle;

      \filldraw[color=blue,opacity=0.2] (0, 0) -- (2, 2) -- (2, 1) --
      (1, 0) -- cycle;
      \filldraw[color=blue,opacity=0.2] (-1, 2) -- (-1, 3) -- (0, 3)
      -- cycle;

      \filldraw[color=green,opacity=0.2] (1, 0) -- (2, 1) -- (2, 0) --
      cycle;

      \filldraw[color=orange,opacity=0.2] (0, 0) -- (3, 3) -- (2, 3) --
      (0, 1) -- cycle;

      \filldraw[color=green,opacity=0.5] (-1, 2) -- (0, 3) -- (1, 3)
      -- (0, 2) -- cycle;

      \draw (0, 0) -- (0, 2) -- (-1, 2) -- (-1, 3);
      \draw[color=red] (-1, 3) -- node[above] {$\sigma_4$} (0, 3);
      \draw (0, 3) -- (2, 3);
      \draw[color=red] (2, 3) -- node[above] {$\sigma_6$} (3, 3);
      \draw (3, 3) -- (3, 2) -- (2, 2) -- (2, 0);
      \draw[color=green!80!black] (2, 0) -- node[below] {$\gamma_5$}
      (0, 0);
    \end{scope}
    \begin{scope}[thick,xshift=-1in]
      \filldraw[color=blue,opacity=0.2] (0, 1) -- (0, 2) -- (1, 3) --
      (2, 3) -- cycle;
      \filldraw[color=blue,opacity=0.2] (2, 2) -- (3, 3) -- (3, 2) --
      cycle;

      \filldraw[color=red,opacity=0.2] (0, 0) -- (2, 2) -- (2, 1) --
      (1, 0) -- cycle;
      \filldraw[color=red,opacity=0.2] (-1, 2) -- (-1, 3) -- (0, 3)
      -- cycle;

      \filldraw[color=green,opacity=0.2] (0, 0) -- (3, 3) -- (2, 3) --
      (0, 1) -- cycle;
      
      \filldraw[color=orange,opacity=0.2] (1, 0) -- (2, 1) -- (2, 0) --
      cycle;

      \filldraw[color=orange,opacity=0.5] (-1, 2) -- (0, 3) -- (1, 3)
      -- (0, 2) -- cycle;

      \draw (0, 0) -- (0, 2) -- (-1, 2) -- (-1, 3);
      \draw[color=red] (-1, 3) -- node[above] {$\sigma_1$}(0, 3);
      \draw (0, 3) -- (2, 3);
      \draw[color=red] (2, 3) -- node[above] {$\sigma_3$} (3, 3);
      \draw (3, 3) -- (3, 2) -- (2, 2) -- (2, 0);
      \draw[color=green!80!black] (2, 0) -- node[below] {$\gamma_2$}
      (0, 0);
    \end{scope}
    \begin{scope}[thick,xshift=-1in,yscale=-1, yshift=0.5in]
      \filldraw[color=blue,opacity=0.2] (2, 3) -- (3, 2) -- (2, 2) --
      (1, 3) -- cycle;

      \filldraw[color=blue,opacity=0.5] (-1, 3) -- (2, 0) -- (2, 1)
      -- (0, 3) -- cycle;
      \filldraw[color=red,opacity=0.5] (0, 1) -- (1, 0) -- (0, 0) --
      cycle;

      \filldraw[color=orange,opacity=0.5] (-1, 3) -- (-1, 2) -- (0, 2)
      -- cycle;
      \filldraw[color=orange,opacity=0.5] (0, 3) -- (2, 1) -- (2, 2)
      -- (1, 3) -- cycle;

      \filldraw[color=green,opacity=0.5] (2, 3) -- (3, 2) -- (3, 3)
      -- cycle;
      \filldraw[color=green,opacity=0.5] (0, 2) -- (0, 1) -- (1, 0)
      -- (2, 0) -- cycle;

      \draw (0, 0) -- (0, 2) -- (-1, 2) -- (-1, 3);
      \draw[color=green!80!black] (-1, 3) -- node[below] {$\gamma_1$} (0, 3);
      \draw (0, 3) -- (2, 3);
      \draw[color=green!80!black] (2, 3) -- node[below] {$\gamma_3$} (3, 3);
      \draw (3, 3) -- (3, 2) -- (2, 2) -- (2, 0);
      \draw[color=red] (2, 0) -- node[above] {$\sigma_2$} (0, 0);
    \end{scope}
    \begin{scope}[thick,xshift=1in,yscale=-1, yshift=0.5in]
      \filldraw[color=red,opacity=0.2] (2, 3) -- (3, 2) -- (2, 2) --
      (1, 3) -- cycle;

      \filldraw[color=blue,opacity=0.5]  (0, 1) -- (1, 0) -- (0, 0) -- cycle;
      \filldraw[color=red,opacity=0.5] (-1, 3) -- (2, 0) -- (2, 1)
      -- (0, 3) -- cycle;
      \filldraw[color=green,opacity=0.5] (-1, 3) -- (-1, 2) -- (0, 2)
      -- cycle;
      \filldraw[color=green,opacity=0.5] (0, 3) -- (2, 1) -- (2, 2)
      -- (1, 3) -- cycle;
      
      \filldraw[color=orange,opacity=0.5] (2, 3) -- (3, 2) -- (3, 3)
      -- cycle;
      \filldraw[color=orange,opacity=0.5] (0, 2) -- (0, 1) -- (1, 0)
      -- (2, 0) -- cycle;

      \draw (0, 0) -- (0, 2) -- (-1, 2) -- (-1, 3);
      \draw[color=green!80!black] (-1, 3) -- node[below] {$\gamma_4$} (0, 3);
      \draw (0, 3) -- (2, 3);
      \draw[color=green!80!black] (2, 3) -- node[below] {$\gamma_6$} (3, 3);
      \draw (3, 3) -- (3, 2) -- (2, 2) -- (2, 0);
      \draw[color=red] (2, 0) -- node[above] {$\sigma_5$} (0, 0);
    \end{scope}
  \end{tikzpicture}
  \caption{Computation of $F_{\sfrac{\pi}{4}}$.}
  \label{fig:computingIET}
\end{figure}

From Figure~\ref{fig:computingIET} together with our coordinatization
described above, we see that $F_{\sfrac{\pi}{4}}$ is the following
piecewise map.
\[
F_{\sfrac{\pi}{4}}(x) = \begin{cases}
  x + \sfrac{5}{2} & \text{ if } x \in (0, \sfrac{1}{2}) \\
  x - \sfrac{1}{2} & \text{ if } x \in (\sfrac{1}{2}, 1) \\
  x + \sfrac{5}{2} & \text{ if } x \in (1, \sfrac{3}{2}) \\
  x - \sfrac{1}{2} & \text{ if } x \in (\sfrac{3}{2}, 2) \\
  x - \sfrac{3}{2} & \text{ if } x \in (2, \sfrac{5}{2}) \\
  x - \sfrac{1}{2} & \text{ if } x \in (\sfrac{5}{2}, 3) \\
  x - \sfrac{3}{2}  & \text{ if } x \in (3, \sfrac{7}{2}) \\
  x - \sfrac{1}{2} & \text{ if } x \in (\sfrac{7}{2}, 4)
\end{cases}
\]

Represented as a diagram showing how the intervals are permuted, the
IET is also presented in Figure~\ref{fig:IETdiagram}.

\begin{figure}
  \centering
  \begin{tikzpicture}
    \draw[very thick, color=red] (0, 0) -- (1.25, 0);
    \draw (0, -0.25) -- (0, 0.25);
    \draw (1.25, -0.25) -- (1.25, 0.25);
    \node at (0.625, 0.5) {A};
    \draw[very thick, color=blue] (1.25, 0) -- (2.5, 0);
    \draw (1.25, -0.25) -- (1.25, 0.25);
    \draw (2.5, -0.25) -- (2.5, 0.25);
    \node at (1.875, 0.5) {B};
    \draw[very thick, color=green] (2.5, 0) -- (3.75, 0);
    \draw (2.5, -0.25) -- (2.5, 0.25);
    \draw (3.75, -0.25) -- (3.75, 0.25);
    \node at (3.125, 0.5) {C};
    \draw[very thick, color=orange] (3.75, 0) -- (5.0, 0);
    \draw (3.75, -0.25) -- (3.75, 0.25);
    \draw (5.0, -0.25) -- (5.0, 0.25);
    \node at (4.375, 0.5) {D};
    \draw[very thick, color=purple] (5.0, 0) -- (6.25, 0);
    \draw (5.0, -0.25) -- (5.0, 0.25);
    \draw (6.25, -0.25) -- (6.25, 0.25);
    \node at (5.625, 0.5) {E};
    \draw[very thick, color=black] (6.25, 0) -- (7.5, 0);
    \draw (6.25, -0.25) -- (6.25, 0.25);
    \draw (7.5, -0.25) -- (7.5, 0.25);
    \node at (6.875, 0.5) {F};
    \draw[very thick, color=cyan] (7.5, 0) -- (8.75, 0);
    \draw (7.5, -0.25) -- (7.5, 0.25);
    \draw (8.75, -0.25) -- (8.75, 0.25);
    \node at (8.125, 0.5) {G};
    \draw[very thick, color=yellow] (8.75, 0) -- (10.0, 0);
    \draw (8.75, -0.25) -- (8.75, 0.25);
    \draw (10.0, -0.25) -- (10.0, 0.25);
    \node at (9.375, 0.5) {H};
    \draw[very thick, color=blue] (0, -3) -- (1.25, -3);
    \draw (0, -3.25) -- (0, -2.75);
    \draw (1.25, -3.25) -- (1.25, -2.75);
    \node at (0.625, -3.5) {B};
    \draw[very thick, color=purple] (1.25, -3) -- (2.5, -3);
    \draw (1.25, -3.25) -- (1.25, -2.75);
    \draw (2.5, -3.25) -- (2.5, -2.75);
    \node at (1.875, -3.5) {E};
    \draw[very thick, color=orange] (2.5, -3) -- (3.75, -3);
    \draw (2.5, -3.25) -- (2.5, -2.75);
    \draw (3.75, -3.25) -- (3.75, -2.75);
    \node at (3.125, -3.5) {D};
    \draw[very thick, color=cyan] (3.75, -3) -- (5.0, -3);
    \draw (3.75, -3.25) -- (3.75, -2.75);
    \draw (5.0, -3.25) -- (5.0, -2.75);
    \node at (4.375, -3.5) {G};
    \draw[very thick, color=black] (5.0, -3) -- (6.25, -3);
    \draw (5.0, -3.25) -- (5.0, -2.75);
    \draw (6.25, -3.25) -- (6.25, -2.75);
    \node at (5.625, -3.5) {F};
    \draw[very thick, color=red] (6.25, -3) -- (7.5, -3);
    \draw (6.25, -3.25) -- (6.25, -2.75);
    \draw (7.5, -3.25) -- (7.5, -2.75);
    \node at (6.875, -3.5) {A};
    \draw[very thick, color=yellow] (7.5, -3) -- (8.75, -3);
    \draw (7.5, -3.25) -- (7.5, -2.75);
    \draw (8.75, -3.25) -- (8.75, -2.75);
    \node at (8.125, -3.5) {H};
    \draw[very thick, color=green] (8.75, -3) -- (10.0, -3);
    \draw (8.75, -3.25) -- (8.75, -2.75);
    \draw (10.0, -3.25) -- (10.0, -2.75);
    \node at (9.375, -3.5) {C};
    \draw[->,thick,color=red] (0.625, -0.25) .. controls (0.625, -1) and (6.875, -2) .. (6.875, -2.75);
    \draw[->,thick,color=green] (3.125, -0.25) .. controls (3.125, -1) and (9.375, -2) .. (9.375, -2.75);
    \draw[->,thick,color=blue] (1.875, -0.25) .. controls (1.875, -1) and (0.625, -2) .. (0.625, -2.75);
    \draw[->,thick,color=purple] (5.625, -0.25) .. controls (5.625, -1) and (1.875, -2) .. (1.875, -2.75);
    \draw[->,thick,color=orange] (4.375, -0.25) .. controls (4.375, -1) and (3.125, -2) .. (3.125, -2.75);
    \draw[->,thick,color=cyan] (8.125, -0.25) .. controls (8.125, -1) and (4.375, -2) .. (4.375, -2.75);
    \draw[->,thick,color=black] (6.875, -0.25) .. controls (6.875, -1) and (5.625, -2) .. (5.625, -2.75);
    \draw[->,thick,color=yellow] (9.375, -0.25) .. controls (9.375, -1) and (8.125, -2) .. (8.125, -2.75);
  \end{tikzpicture}
  \caption{The interval exchange $F_{\frac{\pi}{4}}$. }
  \label{fig:IETdiagram}
\end{figure}

Notice that when $F_\theta(x)$ is applied to a point in one of the
$\gamma_i$ intervals at the base of some quad-T subsurface
$\mathscr{Q}^s$, the corresponding image in $\sigma_j$ of
$\mathscr{Q}^s$ is identified with some $\gamma_k$ of
$\mathscr{Q}^{s'}$ where $s'$ is $s$ with either one bit appended or
the last bit deleted.  In particular, the corresponding $\gamma_k$ in
$\mathscr{Q}^{s'}$ interval may have half or twice the length of the
$\gamma_k$ in $\mathscr{Q}^s$, depending on whether a bit was appended
or removed.  In order to iterate the IET, we must compose the map
$F_\theta$ with another map \label{ntn:PhiRenormalizationMap} $\Phi$ to normalize the coordinates.
Identifying $\Gamma$ and $\Sigma$ with one interval $I$ as before,
$\Phi$ is then a map from $I \times \mathscr{B}^* \to I \times
\mathscr{B}^*$.  We introduce some notation to make the map $\Phi$
easier to describe.  For a binary string $s \in \mathscr{B}^*$ we of
course let $s0$ and $s1$ mean the string with one bit appended to the
end of $s$; \label{ntn:nablas} $\nabla s$ means $s$ with its right-most bit
deleted; and \label{ntn:lambdas} $\lambda(s)$ is the right-most bit of $s$. 
This renormalization map $\Phi$ is independent of the
direction chosen and is easily seen to be the following.

\[
\Phi(x, s) = \begin{cases}
  (2x + \frac{1}{2} , s0) &\text{ if } x \in (0, \sfrac{1}{2})  \\
  (\frac{x}{2} + \frac{5}{4}, \nabla s) & \text{ if } x \in
  (\frac{1}{2}, \frac{3}{2}) \text{ and } \lambda(s) = 1 \\
  (\frac{x}{2} - \frac{1}{4}, \nabla s) & \text{ if } x \in
  (\frac{1}{2}, \frac{3}{2}) \text{ and } \lambda(s) = 0 \\
  (2x - \frac{5}{2}, s1) & \text{ if } x \in (\sfrac{3}{2}, 2) \\
  (2x - \frac{3}{2}, s1) & \text{ if } x \in (2, \sfrac{5}{2}) \\
  (\frac{x}{2} + \frac{3}{4}, \nabla s) & \text{ if } x \in
  (\sfrac{5}{2}, \sfrac{7}{2})  \text{ and } \lambda(s) = 1 \\
  (\frac{x}{2} + \frac{9}{4}, \nabla s) & \text{ if } x \in
  (\sfrac{5}{2}, \sfrac{7}{2})  \text{ and } \lambda(s) = 0 \\
  (2x - \frac{9}{2}, s0) & \text{ if } x \in (\sfrac{7}{2}, 4) \\
  (x, \epsilon) & \text{ if } x \in (\sfrac{1}{2}, \sfrac{3}{2}) \cup
  (\sfrac{5}{2}, \sfrac{7}{2}) \text{ and } s = \epsilon
\end{cases}
\]

Given a starting point $x_0 \in \Gamma^{s_0}$, the geodesic ray in
direction $\theta$ crosses the quad-T subsurfaces in the sequence of
points given by
\[
  (x_{n+1}, s_{n+1}) = \Phi(F_\theta(x_n), s_n)
\]
where $x_n \in \Gamma^{s_n}$.

\section{Existence of Linear Approaches}
\label{sec:linearApproachesExist}
We now build on the content introduced in
Section~\ref{sec:IETs} to show that each elusive singularity has a linear approach.  Given an elusive
singularity $x$ with address $\alpha(x) = e_1 e_2 e_3 \ldots$, we consider several cases:  
\begin{enumerate}
\item there exists $N$ such that $e_{n} = 0$ for all $n\geq N$ or $e_{n} = 1$  for all $n\geq N$ (i.e., $\alpha(x)$  eventually consists of all zeros or all ones),
\item  there exists $N$ such that $e_{2n} = 0$ and $e_{2n+1} = 1$ for all $n\geq N$ (similarly, $e_{2n} = 1$ and $e_{2n+1} = 0$ for all $n\geq N$).  In other words, $\alpha(x)$ ends in a repeating pattern $01010\ldots $ (or $10101\ldots$), and
\item all other cases.
\end{enumerate}

\subsection{The address ends in all zeros or all ones}
We first consider the case of the address consisting entirely of
zeros.  Once this case is understood, the case of any other address ending in all
zeros or all ones will be easy to prove.

Consider the direction $\theta = \tan^{-1}\left(\frac{12}{19}\right)$.
A simple, though tedious, calculation shows the geodesic emitted from
the point $\frac{11}{4} \in \Gamma^\epsilon$ in the direction $\theta$ next intersects the point
$\frac{11}{4} \in \Gamma^0$.  Or, in terms of the interval exchanges,
$F_{\tan^{-1}\left(\sfrac{12}{19}\right)}\left(\frac{11}{4}\right) =
\frac{17}{8}$ and
$\Phi\left(\frac{17}{8}, \epsilon\right) = \left(\frac{11}{4},
  0\right)$.  Alternatively, the billiard in the original $T$-shaped
polygon which starts at the point $\left(\frac{1}{4}, 0\right)$, with
the bottom-most left-corner placed at $(0, 0)$, first reaches the top
of the $T$ at the point $\left(-\frac{3}{8}, \frac{3}{2}\right)$.  See
Figure~\ref{fig:all_zero_trajectory} for the billiard interpretation.

\begin{figure}[h!]
  \centering
  \begin{tikzpicture}[scale=3]
    \draw (0, 0) -- (1, 0) -- (1, 1) -- (1.5, 1) -- (1.5, 1.5);
    \draw[dashed] (1.5, 1.5) -- (1, 1.5);
    \draw (1, 1.5) -- (0, 1.5);
    \draw[dashed] (0, 1.5) -- (-0.5, 1.5);
    \draw (-0.5, 1.5) -- (-0.5, 1) -- (0, 1) -- (0, 0);

    \draw[red] (0.25, 0) -- (1, 18/38) -- (-0.5, 54/38) -- (-0.375, 1.5);
  \end{tikzpicture}
  \caption{The beginning of a linear approach to $0000\ldots$ in the billiard.}
  \label{fig:all_zero_trajectory}
\end{figure}

Applying the interval exchange again (equivalently, allowing the
billiard to bounce up to the next level where a scaled $T$ is attached) gives
the point $\frac{11}{4} \in \Gamma^{00}$.  In general, this ray always
passes from the quad-T with address $\underbrace{000\ldots0}_{n \text{ zeros}}$ to the same point in the quad-T whose address has one
more zero.  This corresponds to a linear approach to the elusive
singularity with address $0000\ldots$ in the T-fractal surface.  A simple
horizontal reflection thus produces an approach to the elusive
singularity with address $1111\ldots$.  Repeating this argument, but
using the ray emanated from the corresponding point in the quad-T with
address $s$ gives a linear approach to the elusive singularity with
address $s0000\ldots$ or $s1111\ldots$.

\subsection{The address ends in $01$ repeating}
\label{subsec:FIET_irrational_direction}
It is shown in \cite{LapMilNie} that a billiard path in the
$T$-fractal billiard beginning on the base\footnote{Recall that the
  base of $\tfractal$ is the interval $[0,1]$, this being different
  from the coordinates on $\tsurf$.} at
\[x_0 = 1700\sqrt{2}/2 -1202 \]
\noindent with an initial direction
$\theta = \pi - \arctan(\sqrt{2}/34)$ will reach an elusive point of
the $T$-fractal billiard without intersecting any singularities of the
billiard table (i.e., corners). In $\tsurf$, such an elusive point
corresponds to an elusive singularity with an address of
$01010\ldots$.  For the convenience of the reader, we provide an
alternative, simpler description of a geodesic which reaches the
elusive singularity with address $010101...$.  Once this has been
established we can, as in the case of an
address ending in all zeros, we can obtain a geodesic ray reaching an
elusive singularity with any address ending in alternative $010101...$
by translating the initial point of the ray to the quad-T addressed by
the desired prefix.

Simply consider the billiard emanating from the midpoint of the base
of the $T$-fractal with initial slope $6/7$.  A simple calculation
shows that this billiard will first hit the right-hand side of the $T$
at $3/7$ above the base, then reflect and travel in a straight line
until reaching the midpoint of the left-hand arm of the $T$.  See
Figure~\ref{fig:alternating}.

\begin{figure}[h!]
    \begin{tikzpicture}[scale=3]
    \draw (0, 0) -- (1, 0) -- (1, 1) -- (1.5, 1) -- (1.5, 1.5);
    \draw[dashed] (1.5, 1.5) -- (1, 1.5);
    \draw (1, 1.5) -- (0, 1.5);
    \draw[dashed] (0, 1.5) -- (-0.5, 1.5);
    \draw (-0.5, 1.5) -- (-0.5, 1) -- (0, 1) -- (0, 0);

    \draw[red] (0.5, 0) -- (1, 3/7) -- (-0.25, 3/2);
  \end{tikzpicture}
  \caption{A billiard which reaches the elusive singularity with
    address $010101...$.}
  \label{fig:alternating}
\end{figure}

As the next $T$ is a scaled copy of the base, but the slope is the
negative of our initial slope, the billiard will enter the right-hand
arm of the next $T$ at the midpoint, but now with the original slope.
Continuing in this way, the billiard moves all the way up the
$T$-fractal moving left, then right, then left, then right, and so on,
as it makes its way up the $T$-fractal.

\subsection{All other cases}
Now suppose that $\alpha(x) = e_1e_2e_3 \ldots$ is the address of any elusive
singularity not already discussed in the previous two cases.  We will
use the IET corresponding to direction $\theta = \sfrac{\pi}{4}$
calculated in Section~\ref{sec:IETs} to determine a starting point in
$\mathcal{Q}^{\epsilon}$ for a ray which in direction $\theta$ gives a
linear approach to the elusive singularity.

We first make an observation about the sequence $(x_{n+1}, s_{n+1}) =
\Phi(F(x_n), s_n)$.  Identifying the $\Gamma$ interval with $[0, 4]$
as above, as was the case, we have that $\gamma_2 = [\sfrac{1}{2}, \sfrac{3}{2}]$ and $\gamma_5 =
[\sfrac{5}{2}, \sfrac{7}{2}]$.  Then, from the definition of
$F_{\frac{\pi}{4}}$ and $\Phi$, we see that 
\begin{equation}
  \label{eqn:nexts}
s_{n+1} = \begin{cases}
  s_n0 & \text{ if } x_n \in \left(\frac{1}{2}, \frac{3}{2}\right)
  \setminus \{1\} \\
  s_n1 & \text{ if } x_n \in \left(\frac{5}{2}, \frac{7}{2}\right)
  \setminus \{3\}
\end{cases}
\end{equation}
Furthermore,
\begin{equation}
  \label{eqn:nextx}
x_{n+1} \in \begin{cases}
  \left(\frac{1}{2}, \frac{3}{2}\right) & \text{ if } x_n \in
  \left(\frac{1}{2}, 1\right) \\
  \left(\frac{5}{2}, \frac{7}{2}\right) & \text{ if } x_n \in \left(1,
  \frac{3}{2}\right) \\
  \left(\frac{1}{2}, \frac{3}{2}\right) & \text{ if } x_n \in
  \left(\frac{5}{2}, 3\right) \\
  \left(\frac{5}{2}, \frac{7}{2}\right) & \text{ if } x_n \in
  \left(3, \frac{7}{2}\right) \\
\end{cases}
\end{equation}
This means any starting point $x_0$ chosen to be in
$\left(\frac{1}{2}, \frac{3}{2}\right) \cup \left(\frac{5}{2},
  \frac{7}{2}\right)$ will give a sequence of $s_n$'s which always has
a single bit appended at each iteration and so there is a well-defined
limiting infinite binary string, $s_\infty = (s_n)_{n\in\mathbb{N}}$.
Thus our goal is to show that given an infinite binary string
$\alpha(x) = e_1 e_2 e_3 \ldots$, the string $s_\infty$ generated from
$(x_0, \epsilon)$ is equal to the given $e$.

To accomplish this we will construct a function
$\kappa : \mathscr{B}^\mathbb{N} \to [0, 4]$ such that the sequence of ordered pairs
$(x_{n+1}, s_{n+1}) = \Phi(F(x_n), s_n)$ generated from
$(x_0, \epsilon) = (\kappa(\alpha(x)), \epsilon)$ satisfies
$x_n = \kappa(\delta^n(\alpha(x)))$ and $s_n = s_{n-1}e_n$, where
$\delta : \mathscr{B}^{\mathbb{N}} \to \mathscr{B}^{\mathbb{N}}$ is
the shift map, $\delta(b_1 b_2 b_3 \ldots) = b_2 b_3 b_4 \ldots$.  To
define $\kappa$, we suppose the address $\alpha(x)$ is written in terms of
blocks of zeros and ones as
\[
\alpha(x) = 0^{n_1}1^{n_2}0^{n_3} \ldots \text{ or } \alpha(x) = 1^{n_1}0^{n_2}1^{n_3}\ldots,
\]
\noindent where $n_i>0$ for $i\geq 1$.   We then define $\eta(\alpha(x))$ as
\[
\eta(\alpha(x)) = \sum_{i=1}^\infty \left(\frac{1}{2}\right)^{\sum_{j=1}^i n_j}.
\]
Notice that $\eta(\alpha(x)) \in \left(\frac{1}{2}, 1\right)$ if $n_1 = 1$ and that
$\eta(\alpha(x)) \in \left(0, \frac{1}{2}\right)$ if $n_1 > 1$.  Now we define
\[
\kappa(\alpha(x)) = \begin{cases}
  \frac{1}{2} + \eta(\alpha(x)) & \text{ if } e_1 = 0 \\
  \frac{5}{2} + \eta(\alpha(x)) & \text{ if } e_1 = 1.
\end{cases}
\]

\begin{proposition}
  \label{prop:linear_approaches}
  If $\alpha(x) = e_1 e_2 e_3 \ldots$ is any address of an elusive singularity
  not ending in all $0$, all $1$, or in $01$-repeating, then the  
  geodesic ray emitted from $\kappa(\alpha(x))$ in the direction $\theta =
  \frac{\pi}{4}$ will be a linear approach to the elusive singularity with address $e$.
\end{proposition}
\begin{proof}
  Let $x_0 = \kappa(\alpha(x))$ and $s_0 = \epsilon$.  Let $(x_n, s_n)$ be the
  sequence of pairs of points and finite binary strings defined by
  $(x_{n+1}, s_{n+1}) = \Phi(F(x_n), s_n)$.  We first make a few
  simple observations about $x_0$, $x_1$, $s_1$, and $s_2$ based on
  the first two bits of $e$.  In particular, we consider four cases
  corresponding to the four possibilities of the first two bits of
  $e$.  The computations we perform in each case are all very
  similar to one another, so we only explicitly calculate the $e_1e_2
  = 01$ and $e_1e_2 = 00$ cases, and leave it to the reader to make
  the corresponding changes for the two remaining cases.  We then prove via induction that the elusive singularity with address $\alpha(x)$ is reached by the geodesic ray emitted from $\kappa(\alpha(x))$ in the direction of $\theta=\pi/4$.  

  Suppose $e_1e_2 = 01$ and notice this means $n_1 = 1$.  We then compute $x_0 = \kappa(
 \alpha(x))$, $F_{\pi/4}(x_0)$ and $\Phi(F_{\pi/4}(x_0), \epsilon)$ as follows:
  \begin{align*}
  	x_0 &= \kappa(\alpha(x)) \\
  			&=\frac{1}{2} + \eta(\alpha(x)) \\
	 		&= \frac{1}{2} + \sum_{i=1}^\infty  \left(\frac{1}{2}\right)^{\sum_{j=1}^i n_j}\\
	 		&= \frac{1}{2}+\frac{1}{2} +\sum_{i=2}^\infty \left(\frac{1}{2}\right)^{\sum_{j=1}^i n_j},
	\end{align*}
	\noindent giving us that $x_0\in (1,\sfrac{3}{2})$.  Then,
	\begin{align*}
		F_{\pi/4}(x_0) &= x_0+\frac{5}{2}\\
								&= \frac{7}{2} + \sum_{i=2}^\infty \left(\frac{1}{2}\right)^{\sum_{j=1}^i n_j},
	\end{align*}
	\noindent giving us that $F_{\pi/4} \in (\sfrac{7}{2},4)$.  Consequently, $\Phi(F_{\pi/4}(x_0),\epsilon) = (2F_{\pi/4}(x_0)-\sfrac{9}{2}, 0)$.  That is,
	\begin{align*}
		x_1 &= 2 F_{\pi/4}(x_0) - \frac{9}{2}\\
				&=7+2\sum_{i=2}^\infty \left(\frac{1}{2}\right)^{\sum_{j=1}^i n_j} -\frac{9}{2}\\
   		&= \frac{5}{2} + \sum_{i=2}^\infty \left(\frac{1}{2}\right)^{-1 + \sum_{j=1}^i n_j}\\
    	&= \frac{5}{2} + \sum_{i=2}^\infty 	\left(\frac{1}{2}\right)^{\sum_{j=2}^i n_j},
  	\end{align*}
 \noindent giving us that $x_1\in (\sfrac{5}{2},\sfrac{7}{2})$.  Now, further computation shows that 
 	\[ x_2 \in \begin{cases}
 		(\sfrac{3}{2},2) & \text{if } x_1 \in (3,\sfrac{7}{2})\\
 		(2,\sfrac{5}{2}) & \text{if } x_1 \in (\sfrac{5}{2}, 3). 		
 	\end{cases}
 	\]
 In either case, $s_2 = 01 = e_1e_2$.  Additionally, we see from the definition of $\kappa(\alpha(x))$ that $x_1 = \kappa(\delta(\alpha(x)))$.  

Now suppose that $e_1e_2 = 00$.  Then, $n_1 > 1$, meaning that $\eta(\alpha(x))\in (0,\sfrac{1}{2})$.  Under such an assumption, we compute $x_0 = \kappa(\alpha(x))$, $F_{\pi/4}(x_0)$ and $\Phi(F_{\pi/4}(x_0), \epsilon)$ as follows.  Since $n_1>1$, $\eta(\alpha(x))$ does not simplify as before:
  \begin{align*}
    x_0 &= \kappa(\alpha(x))\\
    		&=\frac{1}{2} +\eta(\alpha(x)).
   \end{align*}
   \noindent Since $\eta(\alpha(x)) \in (0,\sfrac{1}{2})$, we have that $x_0\in (\sfrac{1}{2},1)$, meaning that $F_{\pi/4}(x_0) = \eta(\alpha(x))$ and $\Phi(F_{\pi/4}(x_0),\epsilon) = (2F_{\pi/4}(x_0) +\sfrac{1}{2},0)$. That is,
   \begin{align*}
    x_1 &= 2 F_{\pi/4}(x_0)+\frac{1}{2}\\
    		&= 2\eta(\alpha(x))+\frac{1}{2}\\
    		&= \sum_{i=1}^\infty \left(\frac{1}{2}\right)^{-1+n_1+\sum_{j=2}^i n_j}+\frac{1}{2},
   \end{align*} 
 \noindent giving us that $x_1\in (\sfrac{1}{2},\sfrac{3}{2})$.  From the definition of $\kappa$, we see again that $x_1= \kappa(\delta(\alpha(x)))$.  Further computation shows that $x_2\in (\sfrac{1}{2},\sfrac{3}{2})$ and $s_2 = 00$, meaning that $s_2 = e_1e_2$, as claimed.  
  
  A similar set of computations---left to the reader---also shows that if $e_1e_2$ equals
  $10$ or $11$, then $s_2 = e_1e_2$ and $x_1 = \kappa(\delta(\alpha(x)))$.

Suppose now that there exists $N\in \mathbb{N}$ such that for every natural number $m\leq N$,   $x_m = \kappa(\delta^m(\alpha(x)))$ and $s_m = e_1\ldots e_m$.  Then, 
\begin{align*}
(x_{N+1},s_{N+1}) 	&= \Phi(F_{\pi/4}(x_N), s_N)\\
								&= \Phi(F_{\pi/4}(\kappa(\delta^N(\alpha(x))), s_N)\\
								&= \begin{cases}
									\Phi(F_{\pi/4}(\sfrac{1}{2} +\eta(\delta^N(\alpha(x)))), s_N) &\text{if } e_{N+1} = 0\\
									\Phi(F_{\pi/4}(\sfrac{5}{2} +\eta(\delta^N(\alpha(x)))), s_N) &\text{if } e_{N+1} = 1
								\end{cases}
\end{align*}

Suppose $e_{N+1} = 0$.  Then, 
\[F_{\pi/4}(\sfrac{1}{2} +\eta(\delta^N(\alpha(x))))
	=	\begin{cases}
		 \sfrac{1}{2}+\eta(\delta^N(\alpha(x)))-\sfrac{1}{2} &\text{if } \eta(\delta^N(\alpha(x)))\in (1,\sfrac{1}{2}) \\
		 \sfrac{1}{2}+\eta(\delta^N(\alpha(x)))+\sfrac{5}{2} &\text{if } \eta(\delta^N(\alpha(x)))\in (\sfrac{1}{2},1) 
		\end{cases}
\]	

Suppose that $\eta(\delta^N(\alpha(x)))\in (0,\sfrac{1}{2})$.  Then,
\[
	\Phi(\eta(\delta^N(\alpha(x))),s_N) = (2\eta(\delta^N(\alpha(x)))+\sfrac{1}{2}, s_N0)
\]
\noindent and
\begin{align*}
x_{N+1} &= 2\eta(\delta^N(\alpha(x)))+\sfrac{1}{2}\\
		&= \eta(\delta(\delta^N(\alpha(x))))+\sfrac{1}{2}\\
		&= \eta(\delta^{N+1})+\sfrac{1}{2}\\
		&= \kappa(\delta^{N+1}(\alpha(x))),
\end{align*}

\noindent as claimed.  Now suppose that $\eta(\delta^N(\alpha(x)))\in (\sfrac{1}{2},1)$.  Then, 

\[
F_{\pi/4}(\sfrac{1}{2}+\eta(\delta^N(\alpha(x))))=3+\eta(\delta^N(\alpha(x)))
\]
\noindent  and

\begin{align*}
\Phi(3+\eta(\delta^N(\alpha(x))), s_N) &= (\sfrac{3}{2} + 2\eta(\delta^N(\alpha(x))), s_N0),
\end{align*}

\noindent giving us that 

\begin{align*}
x_{N+1} 	&= \sfrac{3}{2}+\eta(\delta(\delta^N(\alpha(x))))+1\\
				&= \sfrac{5}{2}+\eta(\delta^{N+1}(\alpha(x)))\\
				&=\kappa(\delta^{N+1}(\alpha(x))).			
\end{align*}

\noindent In both cases, $s_{N+1} = s_{N}0 = e_1e_2\ldots e_Ne_{N+1}$.

Similar calculations show that when $e_{N+1} = 1$, $x_{N+1} = \kappa(\delta^{N+1}(\alpha(x)))$ and $S_{N+1}=e_1\ldots e_{N+1}$.  Therefore, by induction, $x_n = \kappa(\delta^n(\alpha(x)))$ and $s_n = e_1\ldots e_n$ for all $n$.  

Therefore, $\lim_{n\to\infty} s_n = e$ and the geodesic beginning at $x_0=\kappa(\alpha(x))$ in the direction of $\theta = \sfrac{\pi}{4}$ will reach the elusive singularity with address $e$ in the limit.

%
\end{proof}

We remark that the proof of Proposition~\ref{prop:linear_approaches}
does not extend to the two cases discussed previously, when $e$ ends
in repeating $0$, $1$, or $01$.  In those cases, the series will converge to a dyadic rational (adopting the convention
that in the all zero or all ones cases  $\sum n_i = \infty$ and the
corresponding term $\left(\frac{1}{2}\right)^{\sum n_j}$ equals
zero).  In direction $\theta = \frac{1}{4}$, however, any ray emitted
from a point $x_0$ which is a dyadic rational is easily seen to reach
a finite angle conical singularity in the T-fractal surface.

Combining the three cases above together we have thus proven the
following.

\begin{theorem}
\label{thm:every_elusive_linear_approach}
  For each elusive singularity of the T-fractal surface there exists a
  linear approach to that singularity.
\end{theorem}

\section{The metric geometry of $\tsurf$}
\label{sec:theMetricCompletionOfTsurf}



We now record some observations about the metric geometry of $\tsurf$
which will be used in
Section~\ref{sec:elusiveSingularitiesWildSingularities} to answer some
basic questions about the elusive singularities.
  
To simplify the language in some of the arguments to come, we
introduce an operation on pairs of binary strings.  Given
$s, t \in \mathscr{B}^*$, let \label{ntn:wedge} $s \wedge t$ denote
the longest substring of both $s$ and $t$ such that $s$ (resp., $t$)
is obtained by appending a string to the right-hand end of $s$ (resp.,
$t$).  That is, there exist strings $s'$ and $t'$ such that
$s = (s \wedge t) s'$ and $t = (s \wedge t) t'$.  For example, if
$s = 1001101$ and $t = 1001001$, then $s \wedge t = 1001$.  Notice
that $s \wedge t$ will be the empty string if the first (left-most)
character of $s$ and $t$ disagree; e.g., $1 \wedge 0 = \epsilon$.

Let $\Q^s$ and $\Q^t$ be two quad-T surfaces in $\tsurf$.  If
$|\mu|$ represents the length of a geodesic with one endpoint in
$\Q^s$ and one endpoint in $\Q^t$, then we define the distance
between $\Q^s$ and $\Q^t$ to be
\[
  \dist(\Q^s,\Q^t)\coloneqq \inf\{|\mu|:\mu \text{ is a broken geodesic from
  } \Q^s \text{ to } \Q^t\}.
\]

We want to emphasize that the distance between $\Q^s$ and $\Q^t$
\emph{is not} a metric on the set of quad-T subsurfaces of
$\tsurf$.  

\begin{lemma}
  \label{lem:connecting_quadts}
  Given any distinct binary strings $s$ and $t$, there exists a broken
  geodesic connecting $\Q^s$ and $\Q^t$ which passes through each
  intermediate quad-T subsurface exactly once.  That is, if this
  broken geodesic is parametrized as $\gamma : [0, \ell] \to \tsurfr$,
  then for each binary string $u$ which has $s \wedge t$ as a prefix,
  $\gamma^{-1}(\Q^u)$ is connected.  Furthermore, the length of the
  portion of $\gamma$ in $\Q^u$ is less than the diameter of $\Q^u$.
\end{lemma}
\begin{proof}
  We note that a broken geodesic connecting distinct boundary
  components of $\Q$ is easily constructed, as in
  Figure~\ref{fig:broken_boundary}.  That there exists a broken
  geodesic with length less than the diameter of $\Q$ follows from the
  definition of the diameter.
  
  \begin{figure}[h!]
    \centering
    \begin{tikzpicture}[scale=0.66]
      \begin{scope}[thick,xshift=1.25in]
        \draw (0, 0) -- node[scale=0.66,left] {$E$} (0, 2) 
        -- node[scale=0.66,below] {$F$} (-1, 2)
        -- node[scale=0.66,left] {$G$} (-1, 3);
        \draw (0, 3) -- node[scale=0.66,above] {$A$} (2, 3);
        \draw (3, 3) -- node[scale=0.66,right] {$B$} (3, 2)
        -- node[scale=0.66,below] {$C$} (2, 2)
        -- node[scale=0.66,right] {$D$} (2, 0);

        \draw[\sigmaColor] (-1, 3) -- (0, 3);
        \draw[\sigmaColor] (2, 3) -- (3, 3);
        \draw[\gammaColor] (2, 0) -- (0, 0);

      \end{scope}

      \begin{scope}[thick,xshift=-1.25in]
        \draw (0, 0) -- node[scale=0.66,left] {$D$} (0, 2)
        -- node[scale=0.66,below] {$J$} (-1, 2)
        -- node[scale=0.66,left] {$B$} (-1, 3);
        \draw (0, 3) -- node[scale=0.66,above] {$H$} (2, 3);
        \draw (3, 3) -- node[scale=0.66,right] {$G$} (3, 2)
        -- node[scale=0.66,below] {$I$} (2, 2)
        -- node[scale=0.66,right] {$E$} (2, 0);

        \draw[\sigmaColor] (-1, 3) -- (0, 3);
        \draw[\sigmaColor] (2, 3) -- (3, 3);
        \draw[\gammaColor] (2, 0) -- (0, 0);
        
        \filldraw[red] (-0.5, 3) circle (0.025in);
        \draw[red] (-0.5, 3) -- (-0.5, 2);
      \end{scope}

      \begin{scope}[thick,xshift=-1.25in,yscale=-1, yshift=0.5in]
        \draw (0, 0) -- node[scale=0.66,left] {$N$} (0, 2)
        -- node[scale=0.66,above] {$J$} (-1, 2)
        -- node[scale=0.66,left] {$M$} (-1, 3);
        \draw (0, 3) -- node[scale=0.66,below] {$H$} (2, 3);
        \draw (3, 3) -- node[scale=0.66,right] {$L$} (3, 2)
        -- node[scale=0.66,above] {$I$} (2, 2)
        -- node[scale=0.66,right] {$K$} (2, 0);

        \draw[\gammaColor] (-1, 3) -- (0, 3);
        \draw[\gammaColor] (2, 3) -- (3, 3);
        \draw[\sigmaColor] (2, 0) -- (0, 0);

        \draw[red] (-0.5, 2) -- (-0.5, 2.5) -- (3, 2.5);
      \end{scope}
      
      \begin{scope}[thick,xshift=1.25in,yscale=-1, yshift=0.5in]
        \draw (0, 0) -- node[scale=0.66,left] {$K$} (0, 2)
        -- node[scale=0.66,above] {$F$} (-1, 2)
        -- node[scale=0.66,left] {$L$} (-1, 3);
        \draw (0, 3) -- node[scale=0.66,below] {$A$} (2, 3);
        \draw (3, 3) -- node[scale=0.66,right] {$M$} (3, 2)
        -- node[scale=0.66,above] {$C$} (2, 2)
        -- node[scale=0.66,right] {$N$} (2, 0);

        \draw[\gammaColor] (-1, 3) --  (0, 3);
        \draw[\gammaColor] (2, 3) --  (3, 3);
        \draw[\sigmaColor] (2, 0) --  (0, 0);

        \draw[red] (-1, 2.5) -- (-0.5, 2.5) -- (-0.5, 3);
        \filldraw[red] (-0.5, 3) circle (0.025in);
      \end{scope}
    \end{tikzpicture}
    \caption{A broken geodesic connecting two boundary components of $\Q$.}
    \label{fig:broken_boundary}
  \end{figure}

  We now simply concatenate broken geodesics joining midpoints of
  boundary commponents of the quad-T subsurfaces between $\Q^s$ to
  $\Q^{s \wedge t}$, and then join the midpoints of boundary
  components connecting $\Q^{s \wedge t}$ to $\Q^t$, joining the
  midpoints of the boundary components in these two paths leading from
  $\Q^{s \wedge t}$ to one another by another broken geodesic.
\end{proof}

\begin{lemma}
  \label{lem:lower_bound_on_dist_Qs_Qsb2}
  Let $s \in \mathscr{B}^*$ with $|s|\geq 0$. Then
  $\dist(\Q^s, \Q^{sb_1b_2})$ is bounded below by
  $\frac{1}{2^{1+|s|}}$.
\end{lemma}

\begin{proof}
  Consider $s\in \mathscr{B}^*$.  Then $\dist(\Q^s,\Q^{sb_1}) = 0$,
  since they share a boundary component.  Since $\Q^s$ is $\Q$ scaled
  by $\frac{1}{2^{|s|}}$, it follows that any geodesic passing through
  $\Q^s$ must have a length greater than $\frac{1}{2^{1+|s|}}$.
\end{proof}

\begin{lemma}
  \label{lemma:qt_lowerbound}
  Let $s, t \in \mathscr{B}^*$ be distinct binary strings.  Then
  $\dist(\Q^s,\Q^t)$ will be zero if and only if $t$ equals $s$ with
  one more bit appended to the right \emph{(}or $s$ equals $t$ with
  one more bit appended to the right\emph{)}.  In all other cases the
  distance is bounded below by a positive constant depending on $|s|$
  and $|t|$.
\end{lemma}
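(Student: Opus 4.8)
The plan is to exploit the binary-tree adjacency structure of the quad-T subsurfaces that is encoded by the gluing rules. From those rules, two copies $\Q^s$ and $\Q^t$ share a boundary curve in $\tsurf$ precisely when one string is obtained from the other by appending a single bit: the identifications glue each $\Q^r$ only to its two ``children'' $\Q^{r0}, \Q^{r1}$ and to its ``parent'', so the incidence graph of the subsurfaces is the infinite rooted binary tree, with the root's downward boundaries $\gamma_2^\epsilon, \sigma_2^\epsilon, \gamma_5^\epsilon, \sigma_5^\epsilon$ glued to one another. I would first record this adjacency characterization carefully from Figure~\ref{fig:quadt}, since both directions of the lemma reduce to it.

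For the ``distance zero'' direction, suppose $t = sb$ for a bit $b$ (or symmetrically). Then one of the listed gluings identifies a boundary component of $\Q^s$ with a boundary component of $\Q^t$, and after passing to metric completions this identified curve lies in both $\Q^s$ and $\Q^t$. Near such a curve the surface $\tsurf$ is locally isometric to a disc in the plane, the curve being interior to $\tsurf$; hence a short straight segment crossing it is a geodesic with one endpoint in $\Q^s$ and one in $\Q^t$, and letting its length shrink to $0$ shows $\dist(\Q^s,\Q^t)=0$.

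For the converse and the lower bound, suppose $s \neq t$ and neither is the other with a single appended bit. I would exhibit a single quad-T subsurface $\Q^r$ that \emph{separates} $\Q^s$ from $\Q^t$ in the tree: if neither string is a prefix of the other, take $r = s \wedge t$, whose two child-boundary families face the subtrees containing $\Q^s$ and $\Q^t$; if instead $s$ is a proper prefix of $t$ with $|t| \ge |s|+2$, take $r$ to be the child of $s$ lying on the tree-path toward $t$, so that $\Q^s$ faces the parent-boundary family of $\Q^r$ while $\Q^t$ lies beyond one of its child-boundary families. In either case $\Q^s$ and $\Q^t$ lie in different components of the complement of the interior of $\Q^r$, so any path in $\tsurf$ joining them must cross $\Q^r$ from one of these disjoint boundary families $B_1$ to another $B_2$. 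Since $\Q^r$ is compact and $B_1, B_2$ are disjoint closed sets, $\delta := \dist_{\Q^r}(B_1,B_2) > 0$; and since $\Q^r$ is the copy of $\Q$ scaled by $2^{-|r|}$, we have $\delta = 2^{-|r|}\delta_0$ with $\delta_0$ the corresponding positive distance in the unit-scale quad-T surface. As the infimum over geodesics dominates the infimum over all connecting paths, this gives $\dist(\Q^s,\Q^t) \ge 2^{-|r|}\delta_0 > 0$. Because $|r| \le \min(|s|,|t|)-1$ in the first case and $|r| = |s|+1 \le |t|-1$ in the second, this yields a uniform positive lower bound of the form $c\,2^{-\min(|s|,|t|)}$ depending only on the lengths $|s|$ and $|t|$.

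The main obstacle is the separation step: justifying that a connecting path must contribute the full intra-surface distance $\dist_{\Q^r}(B_1,B_2)$, even though a geodesic of $\tsurf$ need not remain inside $\Q^r$ and could in principle oscillate across the separating boundaries. The key point is topological: cutting $\tsurf$ along all glued boundary curves recovers the tree whose vertices are the subsurfaces, so $\Q^r$ genuinely disconnects $\Q^s$ from $\Q^t$, and the portion of any connecting path lying in $\Q^r$ must join $B_1$ to $B_2$ and hence has length at least $\delta$. The remaining quantitative input—bounding below the fixed constant $\delta_0$ between the relevant boundary families of $\Q$—is a finite, elementary computation on the explicit polygonal picture of Figure~\ref{fig:quadt}.
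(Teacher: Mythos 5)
Your proposal is correct and rests on the same two structural facts as the paper's proof---the tree adjacency of the quad-T subsurfaces and a positive crossing cost, scaled by $2^{-\mathrm{depth}}$, for traversing any one of them---but it extracts the lower bound by a genuinely different route. The paper sums the crossing costs of \emph{every} intermediate quad-T along the tree path from $\Q^s$ to $\Q^t$, using the asserted fact that distinct boundary components of the unit-scale quad-T lie at distance at least $1$; this produces the explicit bounds (\ref{eqn:qst_substring}) and (\ref{eqn:qst_general}), whose leading term $2^{-|s \wedge t|}$ with constant $1$ is exactly what Theorem~\ref{prop:elusive_cantor} later quotes to get the clean comparison $d_2 \leq d \leq 3M \, d_2$. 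You instead charge for a \emph{single} crossing of one separating subsurface $\Q^r$, obtaining positivity of the unit-scale crossing cost $\delta_0$ by compactness rather than computation. This is simpler and fully suffices for the lemma as stated, since your bound $c \, 2^{-\min(|s|,|t|)}$ is a positive constant depending only on the lengths; the price is a non-explicit constant, so downstream the comparison of metrics would read $c \, d_2 \leq d$ rather than $d_2 \leq d$---still bi-Lipschitz, hence still enough for the Hausdorff dimension argument, with adjusted constants. One small refinement your separation step needs (and the paper's proof glosses the identical point when it asserts a geodesic ``must pass through every quad-T subsurface between $\Q^s$ and $\Q^t$''): the portion of a connecting path inside $\Q^r$ need not be a single arc from the family $B_1$ to the family $B_2$, because the path can make excursions through the third boundary family into the other child's branch and back. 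The repair is one line: take $\delta_0$ to be the minimum of the pairwise distances among the three mutually disjoint boundary families of the unit quad-T; the intra-$\Q^r$ subarcs then necessarily include arcs joining distinct families, contributing at least $\delta_0 \cdot 2^{-|r|}$ in total, so your stated bound survives. With that emendation your argument is a complete and correct, if less quantitative, alternative to the paper's proof.
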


\begin{proof}
  Suppose, without loss of generality, that $|s|\leq |t|$.  Further
  suppose that $s\wedge t = s$, where $t=st'$, such that $|t'| = 1$,
  $t'\in\mathscr{B}^*$.  Then $\Q^s$ and $\Q^t$ connect at a boundary
  component, and so $\dist(\Q^s,\Q^t) = 0$.
  
  Conversely, suppose that $s\wedge t = g$ such that either 
  
  \begin{enumerate}
  \item $g=s$, $t=st'$, $t'\in\mathscr{B}^*$ such that $|t'|\geq 2$ or
  \item $|g|<|s|\leq |t|$.
  \end{enumerate}

  Consider Case $(1)$: $g=s$, $t=st'$ such that $|t'|\geq 2$.   Let $t'=b_1\ldots b_n$. By Lemma  \ref{lem:lower_bound_on_dist_Qs_Qsb2}, 
  \begin{align}
    \notag \dist(\Q^{sb_0\ldots b_i},\Q^{sb_0\ldots b_{i+2}}) &\geq \frac{1}{2^{i+1+|s|}}
  \end{align}
  \noindent for all $0\leq i \leq n-2$ (where $b_0 = \epsilon$). Therefore,
  \begin{align}
    \notag \dist(\Q^s,\Q^t) & \geq \sum_{i=0}^{n-2} \dist(\Q^{sb_0\ldots b_i}, \Q^{sb_0\ldots b_{i+2}}),	
  \end{align}

  \noindent giving us  that 

  \begin{align}
    \notag \dist(\Q^s,\Q^t) & \geq \sum_{i=0}^{n-2} \frac{1}{2^{i+1+|s|}}\\
    \notag	&=\frac{1}{2^{|s|}}\sum_{i=1}^{n-1} \frac{1}{2^i}\\
    \notag	&=\frac{1}{2^{|s|}}\frac{2^{n-1}-1}{2^{n-1}}.
  \end{align}

  Since $|t|-|s| = n$, we have that 

  \begin{align}
    \notag \dist(\Q^s,\Q^t) & \geq \frac{1}{2^{|s|}}\frac{2^{|t|-|s|-1}-1}{2^{|t|-|s|-1}}\\
    \notag	&= \frac{2^{|t|-|s| - 1}-1}{2^{|t|-1}}.
  \end{align}


  
  Now, if $|s| = |t| = 1$ with $s \neq t$ (i.e., $s=0$ and $t=1$ or
  vice versa), then $\dist(\Q^s,\Q^t) \geq 1$.  This follows from
  Lemma \ref{lem:lower_bound_on_dist_Qs_Qsb2}, since any geodesic
  traversing both $\Q^s$ and $\Q^t$ necessarily passes through
  $\Q^\epsilon$.  In general, for $s\neq t$, $|s|=|t|$ and
  $|g|=|s|-1=|t|-1$, we have that
  $\dist(\Q^s,Q^t)\geq 2^{-|s \wedge t|}$, since any geodesic
  traversing $\Q^s$ and $\Q^t$ must necessarily traverse $\Q^g$.
   
  Proceeding now under the assumption that $|g|<|s|\leq |t|$, by Case 1, 
  \begin{align}
    \notag \dist(\Q^g,\Q^t) & = \frac{2^{|t|-|g| - 1}-1}{2^{|t|-1}},
  \end{align}

  \noindent and

  \begin{align}
    \notag \dist(\Q^g,\Q^s) &= \frac{2^{|s|-|g| - 1}-1}{2^{|s|-1}}.
  \end{align}

  Since the geodesic $\mu$ connecting $\Q^s$ and $Q^t$ must necessarily pass through $\Q^g$, we have that 

  \begin{align}
    \notag \dist(\Q^s,\Q^t) &\geq 	\frac{2^{|s|-|g| - 1}-1}{2^{|s|-1}} + \frac{2^{|t|-|g| - 1}-1}{2^{|t|-1}} + \frac{1}{2^{|g|}}.
  \end{align}
  %
  %
  %
\end{proof}

Similarly, there is an upper bound on how close two points contained
in a quad-T subsurface may be from one another. 

\begin{lemma}
  \label{lemma:qt_upperbound}
  For each $s \in \mathscr{B}^*$, the diameter of the quad-T
  subsurface $\Q^s \subseteq \tsurf$ is bounded above by
  $5\cdot 2^{|s|}$.
\end{lemma}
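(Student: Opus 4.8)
The plan is to reduce the statement to a single fact about the fixed surface $\Q$ and then exploit self-similar scaling, so that a single constant works uniformly for all strings $s$. First I would observe that the completed quad-T surface $\Q$ is compact. By construction it is obtained from four closed (hence compact) copies of the T-shaped polygon by gluing along finitely many edges; its metric completion is therefore the quotient of the disjoint union of these four compact polygons by the edge identifications, and as a continuous image of a compact space it is compact. In particular $\Q$ has finite diameter, and I would \emph{define} the constant appearing in the statement to be
\[
  M := \operatorname{diam}(\Q),
\]
the diameter of $\Q$ measured in its own intrinsic path metric.

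Next I would invoke the fact that $\Q^s$ is, by definition, a copy of $\Q$ scaled by the factor $2^{-|s|}$. Scaling a translation surface by $\lambda > 0$ multiplies the length of every rectifiable path by $\lambda$, hence multiplies the intrinsic distance between any two points—and therefore the intrinsic diameter—by $\lambda$. Thus the intrinsic diameter of $\Q^s$ equals $M \cdot 2^{-|s|}$, \emph{uniformly} in $s$; this uniformity is precisely what lets one use the single constant $M$ for every binary string.

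The one subtlety requiring care is that the diameter in the statement is measured in the ambient surface $\tsurfr$, not in the intrinsic path metric of $\Q^s$. But since each $\Q^s$ is embedded in $\tsurfr$, any path lying inside $\Q^s$ is also a path in $\tsurfr$, so for points $p, q \in \Q^s$ the ambient distance is an infimum over a \emph{larger} family of competing paths (those permitted to leave $\Q^s$ and re-enter through neighboring subsurfaces) and can only be smaller:
\[
  \dist_{\tsurfr}(p,q) \le \dist_{\Q^s}(p,q) \le \operatorname{diam}(\Q^s) = M \cdot 2^{-|s|}.
\]
Taking the supremum over $p, q \in \Q^s$ yields the claimed bound on the diameter of $\Q^s$ as a subset of $\tsurfr$. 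I expect the main (and only mild) obstacle to be bookkeeping rather than estimation: one must keep the intrinsic metric of $\Q^s$ and the ambient metric of $\tsurfr$ distinct, and observe that passing to the ambient metric can only decrease distances. Once that is noted, the compactness of $\Q$ and the scaling relation close the argument immediately.
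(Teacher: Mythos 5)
Your proof is correct and follows essentially the same route as the paper's: compactness of $\Q$ gives a finite diameter $M$, and the $2^{-|s|}$ rescaling of $\Q^s$ yields the bound $M \cdot 2^{-|s|}$. Your explicit distinction between the intrinsic metric of $\Q^s$ and the ambient metric of $\tsurfr$ (noting the embedding can only decrease distances) is a small refinement of a point the paper leaves implicit, not a different approach.
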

\begin{proof}
  We note that $T$-shaped polygons defining the quad-T surface $\Q$
  can be fit inside a $4 \times 3$ Euclidean rectangle.  The distance
  between any two points on $\Q$ is then less than the maximum
  distance between two points inside this rectangle which is simply
  $5$.  (The distance between two points in $\Q$ will often be
  strictly less than the distance between the corresponding points in
  the Euclidean rectangle because of the edge identifications.)  We
  now simply note that each $\Q^s$ subsurface is a rescaling of $\Q$
  by $2^{-|s|}$.
\end{proof}


\begin{lemma}
  \label{lemma:branch_diameter}
  The diameter of each branch $\mathcal{B}^s$ of $\tsurf$ is bounded
  above by $15 \cdot 2^{-|s|}$.
\end{lemma}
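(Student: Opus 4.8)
The plan is to bound the distance between two arbitrary points of the branch by exhibiting an explicit connecting path and estimating its length via Lemma~\ref{lemma:qt_upperbound}. Fix $x, y \in \mathcal{B}^s$; by the definition of the branch we may write $x \in \Q^{st_1}$ and $y \in \Q^{st_2}$ for some $t_1, t_2 \in \mathscr{B}^*$. The subsurfaces $\{\Q^{st} : t \in \mathscr{B}^*\}$ are organized as a binary tree rooted at $\Q^s$, in which the two children of $\Q^{sw}$ are $\Q^{sw0}$ and $\Q^{sw1}$, each glued to its parent along a boundary component by the identifications listed just before Lemma~\ref{lemma:qt_lowerbound}. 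The least common ancestor of $\Q^{st_1}$ and $\Q^{st_2}$ in this tree is $\Q^{su}$, where $u = t_1 \wedge t_2$, so that $st_1 \wedge st_2 = su$. I would route a path from $x$ to $y$ by ascending the tree from $\Q^{st_1}$ to $\Q^{su}$, crossing $\Q^{su}$, and then descending from $\Q^{su}$ to $\Q^{st_2}$. Since the surface distance $d(x,y)$ is the infimum of lengths of such connecting paths, the length of this one path furnishes the desired upper bound on $d(x,y)$.

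Writing $t_1 = u c_1 \cdots c_m$, the ascent from $x$ to the boundary of $\Q^{su}$ passes, in order, through $\Q^{su c_1 \cdots c_m} = \Q^{st_1}$, $\Q^{su c_1 \cdots c_{m-1}}$, \dots, $\Q^{su c_1}$, with each adjacent pair sharing a boundary component. I would bound the length contributed within each of these subsurfaces by its diameter, so that by Lemma~\ref{lemma:qt_upperbound} the total ascent length is at most
\[
  \sum_{j=1}^{m} M\,2^{-(|s|+|u|+j)}
  = M\,2^{-(|s|+|u|)}\bigl(1 - 2^{-m}\bigr)
  < M\,2^{-(|s|+|u|)}.
\]
The descent from the boundary of $\Q^{su}$ to $y$ is estimated in exactly the same way and is likewise strictly less than $M\,2^{-(|s|+|u|)}$, while crossing $\Q^{su}$ itself contributes at most its own diameter $M\,2^{-(|s|+|u|)}$.

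Adding the three contributions gives
\[
  d(x, y) < 3\,M\,2^{-(|s|+|u|)} \le 3\,M\,2^{-|s|},
\]
since $|u| \ge 0$. As $x$ and $y$ were arbitrary, the diameter of $\mathcal{B}^s$ is at most $3M\,2^{-|s|}$, as claimed; the edge cases in which one of $t_1, t_2$ is a prefix of the other (so that one of the two chains is empty) are subsumed by allowing $m = 0$. I expect the only genuine obstacle to be the bookkeeping in the first paragraph: one must read off from the explicit gluing identifications that consecutive subsurfaces in each chain really do share a boundary component, so that the concatenated path is connected and its length is at most the sum of the diameters of the subsurfaces it traverses. Once this adjacency is confirmed, the three geometric series collapse immediately to the factor of three, and \emph{no} new estimate beyond Lemma~\ref{lemma:qt_upperbound} is required.
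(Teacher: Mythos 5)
Your proof is correct and takes essentially the same route as the paper's: an ascend--cross--descend path through the tree of quad-T subsurfaces, with the length in each subsurface bounded by its diameter via Lemma~\ref{lemma:qt_upperbound}, so that two geometric series plus one crossing yield the factor $3$. The only difference is that you route through the least common ancestor $\Q^{s(t_1 \wedge t_2)}$ instead of descending all the way to $\Q^s$, which lets you bypass the paper's normalization to $|t_1| = |t_2|$ and the limit $n \to \infty$, and in fact gives the marginally sharper bound $3M \cdot 2^{-(|s| + |t_1 \wedge t_2|)}$.
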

\begin{proof}
  Let $x, y \in \mathcal{B}^s$ and suppose $x \in \Q^{st_1}$ and
  $y \in \Q^{st_2}$.  As we are trying to obtain an upper bound on
  $d(x, y)$, we may replace $x$ and $y$ with other points in
  $\Q^{st_1}$ and $\Q^{st_2}$ that are further away than the
  originally chosen $x$ and $y$ if necessary.  In particular, since
  $\dist(\Q^{st_1}, \Q^{st_2}) \leq \dist(\Q^{st_1}, \Q^{st_2\tau})$
  for any $\tau \in \mathscr{B}^*$, we may suppose that
  $|t_1| = |t_2|$.

  We get an upper bound on $d(x, y)$ by finding a geodesic from $x$
  down to a point in $\Q^s$, and then from this point back up to $y$.
  The length of this geodesic in each of the intermediate quad-T
  subsurfaces has length no greater than the diameter of the quad-T.
  Hence, we sum these diameters to obtain the following, supposing
  $|t_1| = |t_2| = n$:
  \begin{align*}
    d(x, y) \leq& \frac{5}{2^{|s|}} + 2 \sum_{k=1}^n \frac{5}{2^{|s| +
                  k}} \\
    =& \frac{5}{2^{|s|}} \left(1 + \sum_{k=0}^{n-1}
       \frac{1}{2^{k}}\right) \\
    =& \frac{5}{2^{|s|}} \left(1 + 2\left(1 - 2^{-n}\right)\right).
  \end{align*}
  This diameter increases as $n$ increases (i.e., as the points move
  further up the branch $\mathcal{B}^s$), and taking the limit as
  $n \to \infty$ gives the inequality.
\end{proof}

To describe points of $\esurf$ we need to consider Cauchy sequences of
points of $\tsurfr$ which do not converge in $\mathcal{B}^\epsilon$.
We will show that equivalence classes of these Cauchy sequences, and
hence, points of $\esurf$, can be thought of as infinite binary strings
where the bits of the string tell us how to climb from the branch
$\mathcal{B}^\epsilon$ up to an elusive point.  The proof of this fact
is broken down into several steps presented as lemmas below.

To ease the language of some of the arguments to come, we introduce a
map \label{ntn:sigmax}
$\sigma : \mathcal{B}^\epsilon \to \mathscr{B}^*$ by setting
$\sigma(x) = s$ if $x \in \Q^s$.  We adopt the convention that if $x$
is on a boundary component of a quad-T, and so belongs to two
different quad-T's, $\sigma(x)$ gives the shorter label.  For example,
if $x \in \Q^{101} \cap \Q^{1011}$, then $\sigma(x) = 101$.

\begin{lemma}
  \label{lemma:increasing_sigmas}
  If $\left( x_n \right)_{n \in \mathbb{N}}$ is a Cauchy sequence in
  $\tsurf$ which does not converge to a point of $\mathcal{B}^\epsilon$, then
  $|\sigma(x_n)| \to \infty$ as $n \to \infty$.  Passing to a
  subsequence, we may assume that $|\sigma(x_n)|$ is strictly
  increasing.
\end{lemma}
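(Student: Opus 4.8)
The plan is to argue by contraposition, using compactness of the quad-T subsurfaces as the essential tool. Suppose that $|\sigma(x_n)|$ does not tend to infinity. Then there is a constant $N$ and a subsequence along which $|\sigma(x_{n_k})| \le N$. Since $\mathscr{B}$ has only two elements, there are only finitely many strings of length at most $N$, so the pigeonhole principle supplies a single string $s$ with $|s| \le N$ for which $x_{n_k} \in \Q^s$ for infinitely many $k$. Thus some subsequence of $(x_n)$ lies entirely in the fixed quad-T subsurface $\Q^s$.

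Now I invoke compactness. As recorded in the proof of Lemma~\ref{lemma:qt_upperbound}, the quad-T surface $\Q$, taken with its metric completion, is compact, and hence so is each $\Q^s$. The subsequence lying in $\Q^s$ therefore has a further subsequence converging to some point $p \in \Q^s$. Because $(x_n)$ is Cauchy and possesses a subsequence converging to $p$, the whole sequence converges to $p$. But $p \in \Q^s \subseteq \bigcup_{t \in \mathscr{B}^*} \Q^t = \mathcal{B}^\epsilon$, so $p$ is not an elusive singularity; the sequence therefore converges to a point of $\tsurfr$, contradicting the hypothesis. This forces $|\sigma(x_n)| \to \infty$.

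For the final assertion, once $|\sigma(x_n)| \to \infty$ the sequence of non-negative integers $|\sigma(x_n)|$ is unbounded, so I may greedily choose indices $n_1 < n_2 < \cdots$ with $|\sigma(x_{n_{k+1}})| > |\sigma(x_{n_k})|$ at each stage, yielding a subsequence along which $|\sigma|$ is strictly increasing.

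I expect the main obstacle to be the interaction of the compactness step with the bookkeeping of $\sigma$ on shared boundary components, and with the status of the limit point $p$. The delicate point is to confirm that the limit genuinely lies in a quad-T subsurface (equivalently, $p \notin \esurf$) and so is a point of $\tsurfr$ rather than a point produced only in passing to the completion $\tsurf$. Since each completed $\Q^s$ is closed and the ``shorter label'' convention assigns every boundary point to a quad-T of index no longer than its neighbors, a limit of points of $\Q^s$ stays in $\Q^s$; the compactness of the completed quad-T is exactly what guarantees that the limit is an honest non-elusive point, so that the contradiction with the hypothesis is valid.
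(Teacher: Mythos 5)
Your strategy is, at its core, the same as the paper's: both arguments rest on compactness of finite-level pieces of the surface. The paper forms $\mathcal{P}_n$, the closure of the union of all $\Q^s$ with $|s| \leq n$, regards it as a closed (hence compact, hence complete) subset of the compact surface $\tsurf_m$ obtained by unfolding a finite approximation $T_m$ with $m > n$, and concludes that a non-convergent Cauchy sequence cannot be contained in any $\mathcal{P}_n$. Your variant --- pigeonhole down to a single completed quad-T $\Q^s$, extract a convergent subsequence, and use that a Cauchy sequence with a convergent subsequence converges --- is a slightly more elementary packaging of the same idea, since it avoids introducing the finite unfoldings $\tsurf_m$ altogether; and your greedy extraction of the strictly increasing subsequence at the end is fine.

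There is, however, a genuine gap at exactly the point you flagged as delicate, and your proposed resolution does not close it. From $p \in \Q^s \subseteq \mathcal{B}^\epsilon$ you infer ``$p$ is not an elusive singularity,'' which is correct, but then conclude ``the sequence therefore converges to a point of $\tsurfr$,'' which does not follow: $\tsurf \setminus \tsurfr$ contains not only $\esurf$ but also the countably many conical singularities of cone angle $6\pi$, and these \emph{do} lie in the completed quad-T subsurfaces (the paper explicitly declares that $\Q^s$ denotes the metric completion, which includes the cone points). A Cauchy sequence in $\tsurfr$ converging to such a cone point satisfies the lemma's hypothesis --- it does not converge to a point of $\tsurfr$ --- yet remains in a single $\Q^s$ with $|\sigma(x_n)|$ bounded, so your contradiction evaporates in that case; indeed, such a sequence is a counterexample to the lemma as literally stated. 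To be fair, the paper's own proof has the identical blind spot (``it can not be contained in any $\mathcal{P}_n$'' fails for the same reason), and the statement is evidently intended with the hypothesis under which it is actually applied in Lemma~\ref{lemma:cauchy_strings}, namely non-convergence to a point of $\mathcal{B}^\epsilon$. Under that hypothesis your argument is airtight and needs no repair: the conclusion $p \in \Q^s \subseteq \mathcal{B}^\epsilon$ is itself the contradiction, and the question of whether $p$ lies in $\tsurfr$ or is a cone point never needs to be decided. You should state that corrected hypothesis explicitly rather than lean on the equivalence ``non-elusive $\Leftrightarrow$ in $\tsurfr$,'' which is false here.
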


\begin{proof}

  If $\left( x_n \right)_{n \in \mathbb{N}}$ is a Cauchy sequence in
  $\tsurf$ which does not converge in $\mathcal{B}^\epsilon$, it
  cannot be contained in any $\mathcal{B}^\epsilon_n$ (since
  $\mathcal{B}^\epsilon$ is complete) and so
  $\sup |\sigma(x_n)| = \infty$.  As the sequence
  $(x_n)_{n\in \mathbb{N}}$ is Cauchy, we must have
  $|\sigma(x_n)| \to \infty$.
\end{proof}  

In Proposition \ref{prop:elusive_binary}, we will show that there is a
1--1 correspondence between
$\esurf=\tsurf\setminus\mathcal{B}^\epsilon$ and the set of all
infinite binary strings.  Lemma \ref{lemma:increasing_sigmas} showed
that every Cauchy sequence $(x_n)_{n\in\N}$ not convergent in
$\mathcal{B}^\epsilon$, $(|\sigma(x_n)|)_{n\in\N}$ can be thought of
as a strictly increasing sequence, but did not necessarily conclude
that $|\sigma(x_{n+1})|= |\sigma(x_{n})|+1$, for all $n\in\N$.

In order for us to show the desired correspondence in Proposition \ref{prop:elusive_binary}, we must show that there exists a Cauchy sequence $(y_n)_{n\in\N}$ in $\tsurfr$ equivalent to a Cauchy sequence $(x_n)_{n\in\N}$ guaranteed by Lemma \ref{lemma:increasing_sigmas} and $|\sigma(y_n+1)| = |\sigma(y_n)|+1$, for every $n\in \N$.  We state this as Lemma \ref{lemma:cauchy_strings}.

\begin{lemma}
  \label{lemma:cauchy_strings}
  If $\left( x_n \right)_{n \in \mathbb{N}}$ is a Cauchy sequence in
  $\tsurf$ which does not converge to a point of
  $\mathcal{B}^\epsilon$, then there exists an equivalent Cauchy
  sequence $\left( y_n \right)_{n \in \mathbb{N}_0}$ of $\tsurf$
  where for each $n$, $|\sigma(y_n)| = n$ and
  $\sigma(y_n) \wedge \sigma(y_{n+1}) = \sigma(y_n)$.
\end{lemma}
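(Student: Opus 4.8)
The plan is to read off from the non-convergent Cauchy sequence a canonical \emph{address}: an infinite binary string whose length-$n$ prefixes $w_n$ record the level-$n$ quad-T through which the tail of the sequence eventually passes. I will then realize $(y_n)$ by placing each $y_n$ at an interior point of $\Q^{w_n}$.

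First I would invoke Lemma~\ref{lemma:increasing_sigmas}. Since the sequence does not converge to a point of $\mathcal{B}^\epsilon \supseteq \tsurfr$, it in particular does not converge to a point of $\tsurfr$, so $|\sigma(x_n)| \to \infty$; passing to a subsequence (which does not change the equivalence class of the Cauchy sequence) I may assume $|\sigma(x_n)|$ is strictly increasing. The heart of the proof is a \emph{stabilization claim}: for each $n \geq 0$ there is an index $N$ and a string $w_n$ with $|w_n| = n$ such that $w_n$ is a prefix of $\sigma(x_k)$ for every $k \geq N$. To prove it I fix $n$, use the Cauchy condition together with $|\sigma(x_k)| \to \infty$ to find $N$ with $d(x_j, x_k) < 2^{-n}$ and $|\sigma(x_k)| > n$ for all $j, k \geq N$, and then split into cases according to Lemma~\ref{lemma:qt_lowerbound}. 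If neither of $\sigma(x_j),\sigma(x_k)$ is a prefix of the other, the term $2^{-|\sigma(x_j)\wedge\sigma(x_k)|}$ appearing in \eqref{eqn:qst_general} yields $d(x_j,x_k) \geq \dist(\Q^{\sigma(x_j)},\Q^{\sigma(x_k)}) \geq 2^{-|\sigma(x_j)\wedge\sigma(x_k)|}$, forcing $|\sigma(x_j)\wedge\sigma(x_k)| > n$; if one is a prefix of the other, then $|\sigma(x_j)\wedge\sigma(x_k)|$ is the length of the shorter string, which already exceeds $n$. In either case the first $n$ bits of all $\sigma(x_k)$ with $k \geq N$ coincide, and I define $w_n$ to be this common prefix.

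I would then check consistency of the $w_n$: choosing a single $k$ large enough to certify the claim for both $n$ and $n+1$ exhibits $w_n$ and $w_{n+1}$ as the length-$n$ and length-$(n+1)$ prefixes of the same string $\sigma(x_k)$, so $w_n$ is a prefix of $w_{n+1}$. Taking $w_0 = \epsilon$ and choosing each $y_n$ to be an interior point of $\Q^{w_n}$ (so that $y_n \in \tsurfr$ and $\sigma(y_n) = w_n$), the two required properties are immediate: $|\sigma(y_n)| = n$, and $\sigma(y_n)\wedge\sigma(y_{n+1}) = w_n \wedge w_{n+1} = w_n = \sigma(y_n)$.

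It remains to see that $(y_n)$ is Cauchy and equivalent to $(x_n)$, and here the diameter bounds do the work. For $m \geq n$ both $y_n$ and $y_m$ lie in the branch $\mathcal{B}^{w_n}$, so Lemma~\ref{lemma:branch_diameter} gives $d(y_n, y_m) \leq 3M \cdot 2^{-n}$, proving that $(y_n)$ is Cauchy. For equivalence, given $\varepsilon > 0$ I choose $n$ with $3M \cdot 2^{-n} < \varepsilon$ and then, by the stabilization claim, an $N$ so large that $x_k \in \mathcal{B}^{w_n}$ for all $k \geq N$; since $y_m \in \mathcal{B}^{w_n}$ for all $m \geq n$ as well, the tails of both sequences lie in a set of diameter at most $3M \cdot 2^{-n} < \varepsilon$, so the two Cauchy sequences determine the same point of $\tsurf$. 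The main obstacle is the case analysis in the stabilization claim: Lemma~\ref{lemma:qt_lowerbound} supplies a clean lower bound of the form $2^{-|\sigma(x_j)\wedge\sigma(x_k)|}$ only when neither string is a prefix of the other, so the prefix case must be handled separately using $|\sigma(x_n)| \to \infty$; once stabilization is in hand, the Cauchy and equivalence verifications are routine applications of Lemma~\ref{lemma:branch_diameter}.
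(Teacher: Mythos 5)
Your proposal is correct and follows essentially the same route as the paper's proof: stabilize the length-$n$ prefixes of $\sigma(x_k)$ using the Cauchy condition together with the lower bound of Lemma~\ref{lemma:qt_lowerbound}, place each $y_n$ in $\Q^{w_n}$, and conclude equivalence via the branch-diameter bound of Lemma~\ref{lemma:branch_diameter}. The only difference is one of detail, not substance: you spell out the prefix/non-prefix case analysis behind the implication $d(x_j,x_k)<2^{-n} \Rightarrow |\sigma(x_j)\wedge\sigma(x_k)|>n$, and the interior-point choice ensuring $\sigma(y_n)=w_n$ under the labeling convention, both of which the paper asserts without elaboration.
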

\begin{proof}
  By Lemma~\ref{lemma:increasing_sigmas} we may assume that
  $\left(|\sigma(x_n)|\right)_{n\in\mathbb{N}}$ is a strictly
  increasing sequence.  Let $j \in \mathbb{N}_0$.  Since
  $(x_n)_{n\in\mathbb{N}}$ is a Cauchy sequence in $\tsurf$, there
  exists $N_j \in \mathbb{N}$ such that for all $m, n > N_j$ we have
  $d(x_m, x_n) < 2^{-j}$.  We must then have that for each
  $m, n > N_j$,
  \[
    \left| \sigma(x_m) \wedge \sigma(x_n) \right| > j
  \]
  and so the first $j$ characters of $\sigma(x_m)$ and $\sigma(x_n)$
  must agree.  
  
  Now for each $j$ let $\sigma_j$ be a string of $j$ bits
  agreeing with $\sigma(x_m) \wedge \sigma(x_n)$ for $m, n > N_j$.
  Now choose $y_j$ to be any point in the interior of $\Q^{\sigma_j}$.  By
  construction, $\sigma_j = \sigma(y_j)$ and $|\sigma_j| = j$.  Therefore, 
  \[\sigma(y_{j+1})\wedge \sigma(y_j) = \sigma(y_j)\] 
  \noindent and $|\sigma(y_{j+1})| = j+1 = |\sigma(y_j)|+1$, for all
  $j$, meaning that $\sigma(y_{j+1}) = \sigma(y_j)s'$, where $s'=0$ or
  $s'=1$.  By construction, for every $n\geq N_j$,
  $\sigma(y_i)\wedge \sigma(x_n) = \sigma(y_j)$, resulting in
  $x_n\in \mathcal{B}^{\sigma_j}$. By
  Lemma~\ref{lemma:branch_diameter}, for $n > N_j$, $d(x_n, y_j)$ is
  at most $3\cdot 2^{-|\sigma_j|}$.  Hence, the distance between
  points of the $\left( x_n \right)_{n \in \mathbb{N}}$ sequence and
  the $\left( y_j \right)_{j \in \mathbb{N}}$ sequence goes to zero
  and the two sequences determine the same point in the metric
  completion $\tsurf$.
\end{proof}


\begin{proposition}
  \label{prop:uniqueAddressOfElusivePoint}
  The address $\alpha(x)$ of $x\in \esurf$ is unique.
\end{proposition}

\begin{proof}
  Suppose $x,y \in \esurf$ and suppose that
  $\left(x_n\right)_{n \in \N_0}$ and $\left(y_n\right)_{n \in \N_0}$
  are Cauchy sequences converging to $x$ and $y$, respectively, where
  for each $n$, $|\sigma(x_n)| = n$,
  $\sigma(x_n) \wedge \sigma(x_{n+1}) = \sigma(x_n)$, and likewise for
  the $\sigma(y_n)$.

  If $x = y$, then $\sigma(x_n)$ and $\sigma(y_n)$ must be equal for
  each $n$: if not, say $\sigma(x_{n_0}) \neq \sigma(y_{n_0})$.  Since
  $x_{n_0}\in \Q^{\sigma(x_{n_0})}$ and
  $y_{n_0}\in \Q^{\sigma(y_{n_0})}$ and
  Lemma~\ref{lemma:qt_lowerbound} states that
  $\dist( \Q^{\sigma(x_{n_0})}, \Q^{\sigma(y_{n_0})})$ is bounded
  below by a positive constant, we have that
  \[
    d(x_n,y_n)\geq \dist( \Q^{\sigma(x_{n_0})},  \Q^{\sigma(y_{n_0})})
  \]
  \noindent for all $n\geq n_0$, we must have that $d(x_n, y_n)$ is
  bounded below for each $n > n_0$.  This means that the sequences
  $\left(x_n\right)_{n \in \N_0}$ and $\left(y_n\right)_{n \in \N_0}$
  determine different points of $\esurf$.

  If $\alpha(x) = \alpha(y)$, then $\sigma(x_n) = \sigma(y_n)$ for all
  $n$.  Consequently, $x_n$ and $y_n$ are always in the same quad-T
  subsurface, $\Q^{\sigma(x_n)} = \Q^{\sigma(y_n)}$.  By
  Lemma~\ref{lemma:qt_upperbound}, $d(x_n, y_n) \leq 5 \cdot 2^{-n}$,
  and so $x = y$.
\end{proof}
\begin{proposition}
  \label{prop:elusive_binary}
  The points of $\esurf = \tsurf \setminus \mathcal{B}^\epsilon$ are
  in one-to-one correspondence with the set of all infinite binary
  strings.
\end{proposition}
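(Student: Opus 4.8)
The plan is to construct two mutually inverse maps between $\esurf$ and the set $\mathscr{B}^{\mathbb{N}}$ of infinite binary strings. For the forward map $\Phi$, I start with an elusive point $x \in \esurf = \tsurf \setminus \mathcal{B}^\epsilon$, represented by a Cauchy sequence in $\tsurfr$ that does not converge to a point of $\mathcal{B}^\epsilon$. Lemmas~\ref{lemma:increasing_sigmas} and~\ref{lemma:cauchy_strings} let me replace this by an equivalent sequence $(y_n)$ with $|\sigma(y_n)| = n$ and $\sigma(y_n) \wedge \sigma(y_{n+1}) = \sigma(y_n)$, so the labels $\sigma(y_n)$ are nested prefixes and determine a single infinite string $\Phi(x) := \lim_n \sigma(y_n)$. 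To verify $\Phi$ is well defined I would check independence of the representative: if two representatives yielded infinite strings first differing at position $k$, then for all large $n$ the labels would share a common prefix of length exactly $k-1$, and the general lower bound~\eqref{eqn:qst_general} of Lemma~\ref{lemma:qt_lowerbound} would give $d(y_n, y_n') \ge \dist(\Q^{\sigma(y_n)}, \Q^{\sigma(y_n')}) \ge 2^{-(k-1)}$, contradicting that the two sequences are equivalent.

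For the reverse map $\Psi$, given $w = b_1 b_2 \cdots \in \mathscr{B}^{\mathbb{N}}$ I write $w|_n$ for the length-$n$ prefix and pick an interior point $y_n \in \Q^{w|_n}$ for each $n$. Whenever $n, m \ge N$ the points $y_n, y_m$ both lie in the branch $\mathcal{B}^{w|_N}$, so Lemma~\ref{lemma:branch_diameter} gives $d(y_n, y_m) \le 3M \cdot 2^{-N}$; hence $(y_n)$ is Cauchy and defines a point $\Psi(w) \in \tsurf$.

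The key step, which I expect to be the main obstacle, is showing that $\Psi(w)$ is genuinely elusive, i.e.\ that it lies in no quad-T subsurface $\Q^s$. Suppose for contradiction $\Psi(w) \in \Q^s$ with $|s| = k$; then $d(y_n, \Psi(w)) \to 0$, while $d(y_n, \Psi(w)) \ge \dist(\Q^{w|_n}, \Q^s)$. I would split into two cases according to whether $s$ is a prefix of $w$. If it is not, the common prefix $s \wedge (w|_n)$ stabilizes to a string of some fixed length $j < k$, and~\eqref{eqn:qst_general} bounds the distance below by $2^{-j} > 0$. The delicate case is when $s = w|_k$ is a prefix of $w$: here $w|_n = s\,(b_{k+1}\cdots b_n)$, and the substring bound~\eqref{eqn:qst_substring} gives $\dist(\Q^s, \Q^{w|_n}) \ge (2^{\,n-k-1} - 1)/2^{\,n-1}$, which tends to $2^{-k}$ and so stays above the fixed positive constant $2^{-k-1}$ for all large $n$. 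Either way $d(y_n, \Psi(w))$ is bounded below by a positive constant, contradicting convergence; hence $\Psi(w) \in \esurf$.

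It remains to check that $\Phi$ and $\Psi$ are mutually inverse. For $\Phi \circ \Psi = \mathrm{id}$, the interior choice of $y_n \in \Q^{w|_n}$ forces $\sigma(y_n) = w|_n$, so $\Phi(\Psi(w)) = \lim_n \sigma(y_n) = w$. For $\Psi \circ \Phi = \mathrm{id}$, an elusive $x$ has a representative $(y_n)$ with $\sigma(y_n) = \Phi(x)|_n$, while $\Psi(\Phi(x))$ is built from points $y_n'$ in the same subsurfaces $\Q^{\Phi(x)|_n}$; since Lemma~\ref{lemma:qt_upperbound} bounds the diameter of $\Q^{\Phi(x)|_n}$ by $M \cdot 2^{-n}$, we get $d(y_n, y_n') \le M \cdot 2^{-n} \to 0$, so the two sequences determine the same point and $\Psi(\Phi(x)) = x$. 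This establishes the desired bijection.
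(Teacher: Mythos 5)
Your proposal is correct and follows essentially the same route as the paper: addresses are extracted from the nested labels $\sigma(y_n)$ provided by Lemma~\ref{lemma:cauchy_strings}, the lower bound of Lemma~\ref{lemma:qt_lowerbound} gives well-definedness and injectivity, and the diameter bounds of Lemmas~\ref{lemma:qt_upperbound} and~\ref{lemma:branch_diameter} give the converse. The one place you go beyond the paper's written proof is your ``key step'': the paper asserts that each infinite string determines a point of $\esurf$ without explicitly checking that the limit of a sequence $(y_n)$ with $y_n \in \Q^{w|_n}$ avoids every quad-T subsurface $\Q^s$, whereas your two-case argument via \eqref{eqn:qst_substring} and \eqref{eqn:qst_general} (with the prefix case tending to $2^{-k}$, hence staying above $2^{-k-1}$ for large $n$) supplies this verification correctly and fills in a step the paper leaves implicit.
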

\begin{proof}
  By Lemma~\ref{lemma:cauchy_strings}, each point $y\in \esurf$ can be
  described as the limit of a Cauchy sequence
  $\left( y_j \right)_{j \in \mathbb{N}}$ where
  $\sigma(y_0) = \epsilon$ and $\sigma(y_{j+1})$ is obtained by
  appending a single bit to $\sigma(y_j)$; the address of $y$ is then
  $\alpha(y)$.  Given an infinite string
  $\beta = (b_i)_{i\in \N_{0}}$, with $b_0 = \epsilon$ and $b_i=0$ or
  $b_i=1$ for $i\geq 1$, let $\beta_n\coloneqq (b_i)_{i=0}^n$. Then
  $\Q^{\beta_n}$ determines a quad-T subsurface of $\tsurf$.  For each
  $n$, choose $x\in \Q^{\beta_n}$ so that $\sigma(x_n) = \beta_n$.  By
  construction, $|\sigma(x_n)|$ is a strictly increasing sequence and
  $(x_n)_{n\in\N_0}$ must converge to some point of $\esurf$.
  Otherwise, $x_n\to x$ where $x\not\in \esurf$ would imply that there
  exists $m\in\N$ such that $\sigma(x_m) = \sigma(x_n)$ for all
  $n\geq m$, further implying that $(|\sigma(x_n)|)_{n\in\N_0}$ was
  not a strictly increasing sequence.
\end{proof}

There are two natural metrics on $\esurf$ we may consider.  Perhaps
the most natural metric for $\esurf$ is the metric of $\tsurf$
restricted to $\esurf$.  By Proposition~\ref{prop:elusive_binary} we
may also identify each elusive singularity with an infinite binary
string, and under this identification it is natural to consider the
2-adic metric on $\esurf$, which we denote $d_2$.

\begin{definition}
  \label{definition:addressOfElusiveSingularityANDtwo-adicMetric}
  Let $x \in \esurf$ and $\alpha(x)$ be the address of $x$.  We define the
  \emph{2-adic metric} on $\esurf$ to be the 2-adic metric on the set
  of binary strings,
  \[
    d_2(x, y) = 2^{-|\alpha(x) \wedge \alpha(y)|}.
  \]
\end{definition}

\begin{theorem}
  \label{thm:equivalent_metrics}
  If $d$ is the metric defined on $\tsurf$, then metric spaces
  $(\esurf,d)$ and $(\esurf,d_2)$ are equivalent metric spaces.
\end{theorem}

\begin{proof}
  Let $x,y\in \esurf$ and $(x_n)_{n\in\N_0}$, $(y_n)_{n\in\N_0}$ be
  Cauchy sequences where $(|\sigma(x_n)|)_{n\in\N_0}$ and
  $(|\sigma(y_n)|)_{n\in\N_0}$ are strictly increasing sequences.  If
  $x=y$, then $d_2(x,y) = d(x,y) = 0$.  Otherwise, there exists
  $n\in\N_0$ such that
  $\alpha(x) \wedge \alpha(y) = \sigma(x_n)\wedge \sigma(y_n)$.  Since
  for every $m>n$, $x_m$ and $y_m$ are in the same branch
  $\mathcal{B}^{\sigma(x_n)\wedge \sigma(y_n)}$, it follows from Lemma
  \ref{lemma:qt_lowerbound} that
  \[
    d(x_m,y_m)\geq 2^{-|\sigma(x_n)\wedge \sigma(y_n)|} = 2^{-n}
  \]
  \noindent for every $m > n$.  By Lemma \ref{lemma:qt_upperbound}, 
  \[
    d(x_m,y_m) \leq 15\cdot 2^{-n} = 15\cdot d_2(x,y).
  \]
  Therefore, 
  \[
    d_2(x, y)
    \leq d(x_m, y_m)
    \leq  15 \cdot d_2(x, y).
  \]
  \noindent for every $m>n$.  Letting $m\to \infty$,
  \[
    d_2(x, y)
    \leq d(x, y)
    \leq 15 \cdot d_2(x, y).
  \]

  Thus $d$ and $d_2$ define equivalent metrics on $\esurf$.

\end{proof}

\begin{theorem}
  \label{prop:elusive_cantor}
  The set of elusive singularities $\esurf$, using either the metric
  $d$ from $\tsurf$ or the 2-adic metric $d_2$, is a Cantor set of
  Hausdorff dimension $1$.
\end{theorem}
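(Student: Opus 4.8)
The plan is to reduce everything to the standard model $(\{0,1\}^{\mathbb{N}}, d_2)$ of the Cantor set supplied by Proposition~\ref{prop:elusive_binary}, and to dispose of the phrase ``either metric'' in one stroke by first proving that the ambient metric $d$ and the $2$-adic metric $d_2$ of Definition~\ref{definition:addressOfElusiveSingularityANDtwo-adicMetric} are bi-Lipschitz equivalent on $\esurf$. Since a bi-Lipschitz homeomorphism preserves both the topology and the Hausdorff dimension, this lets me carry out the remaining homeomorphism and dimension computations entirely in the combinatorial model, and conclude the same facts for $(\esurf, d)$.

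First I would establish the comparison $d_2(x,y) \le d(x,y) \le 3M\, d_2(x,y)$ for all distinct $x,y \in \esurf$, where $M$ is the constant of Lemma~\ref{lemma:qt_upperbound}. Write $s = \alpha(x) \wedge \alpha(y)$ and $k = |s|$, so $d_2(x,y) = 2^{-k}$, and let $(x_n)$ and $(y_n)$ be the defining Cauchy sequences with $|\sigma(x_n)| = |\sigma(y_n)| = n$ following the addresses $\alpha(x)$ and $\alpha(y)$. For $n \ge k$ both $x_n$ and $y_n$ lie in the branch $\mathcal{B}^{s}$, so Lemma~\ref{lemma:branch_diameter} gives $d(x_n, y_n) \le 3M\cdot 2^{-k}$, and letting $n \to \infty$ yields the upper bound by continuity of the metric. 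For the lower bound, for $n > k$ the strings $\sigma(x_n)$ and $\sigma(y_n)$ have meet of length exactly $k$, so the general estimate~\eqref{eqn:qst_general} of Lemma~\ref{lemma:qt_lowerbound}---retaining only the nonnegative $2^{-|s \wedge t|}$ term---gives $d(x_n,y_n) \ge \dist(\Q^{\sigma(x_n)}, \Q^{\sigma(y_n)}) \ge 2^{-k}$, and again letting $n \to \infty$ gives $d(x,y) \ge 2^{-k} = d_2(x,y)$. I expect this comparison to be the crux of the argument, since it is precisely what converts the geometry of $\tsurf$ into the purely combinatorial structure of binary addresses; everything afterward is standard.

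Next I would verify the topological claim in the model $(\{0,1\}^{\mathbb{N}}, d_2)$ via Brouwer's characterization of the Cantor set as the unique (up to homeomorphism) nonempty, compact, perfect, totally disconnected metrizable space. Compactness holds because $d_2$ is complete (a $d_2$-Cauchy sequence stabilizes bit by bit) and totally bounded (the $2^k$ cylinders of strings sharing a fixed length-$k$ prefix have diameter $2^{-k}$ and cover the space). The space is perfect because flipping the $(k+1)$-st bit of an address produces a distinct point at $d_2$-distance $2^{-k} \to 0$, so no point is isolated. It is totally disconnected because each cylinder is $d_2$-clopen, so any two distinct points are separated by a clopen set and every connected component is a singleton. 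Hence $(\esurf, d_2)$, and therefore by the first step $(\esurf, d)$, is a Cantor set.

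Finally I would compute $\dim_H(\esurf, d_2) = 1$, which by the bi-Lipschitz equivalence also yields $\dim_H(\esurf, d) = 1$. For the upper bound, given $p > 1$, covering $\esurf$ by the $2^k$ level-$k$ cylinders of diameter $2^{-k}$ gives $\mathcal{H}^p_{2^{-k}}(\esurf) \le 2^k (2^{-k})^p = 2^{k(1-p)} \to 0$, so $\dim_H \le 1$. For the lower bound I would apply the mass distribution principle to the Bernoulli measure $\mu$ assigning mass $2^{-|t|}$ to the cylinder determined by the finite string $t$: the ultrametric property of $d_2$ forces any set $U$ of diameter $\delta \in (2^{-(k+1)}, 2^{-k}]$ to lie inside a single level-$k$ cylinder, whence $\mu(U) \le 2^{-k} < 2\delta$, giving $\mathcal{H}^1(\esurf) \ge \tfrac12\,\mu(\esurf) > 0$ and hence $\dim_H \ge 1$. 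The dimension computation is routine once the ultrametric comparison of the two metrics is in hand, so the genuine obstacle remains the two-sided estimate of the first step.
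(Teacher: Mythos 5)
Your proposal is correct, and its backbone is the same as the paper's: both proofs hinge on the two-sided estimate $d_2(x,y)\le d(x,y)\le 3M\,d_2(x,y)$ obtained from Lemma~\ref{lemma:qt_lowerbound} and Lemma~\ref{lemma:branch_diameter}, so that the identity map is bi-Lipschitz and everything can be checked in the combinatorial model of Proposition~\ref{prop:elusive_binary}. In fact your write-up of that comparison is more careful than the paper's, which simply cites the two lemmas: you make explicit the limiting argument along the approximating sequences $(x_n)$, $(y_n)$, and you correctly note that for $n>k$ the extra terms in inequality~\eqref{eqn:qst_general} are nonnegative, so the $2^{-|s\wedge t|}$ term alone yields the lower bound. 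Where you genuinely diverge is the dimension computation. The paper argues via self-similarity: $(\esurf,d_2)$ is the attractor of the two similitudes $\phi_0,\phi_1$ of ratio $\sfrac{1}{2}$ with disjoint images, so $H^s(\esurf)=2\cdot 2^{-s}H^s(\esurf)$, which forces $s=1$ \emph{provided} $0<H^s(\esurf)<\infty$ --- a fact the paper only asserts (``one can show''). You instead compute directly: the covering of $\esurf$ by the $2^k$ level-$k$ cylinders of diameter $2^{-k}$ gives $\dim_H\le 1$, and the mass distribution principle applied to the uniform Bernoulli measure, using the ultrametric property of $d_2$ to trap any set of diameter $\delta\in(2^{-(k+1)},2^{-k}]$ in a single level-$k$ cylinder, gives $\mathcal{H}^1(\esurf,d_2)\ge\sfrac{1}{2}$ and hence $\dim_H\ge 1$. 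Your route is more elementary and more complete: it actually proves the finiteness and positivity of the $1$-dimensional measure that the paper's functional-equation argument presupposes. Likewise, for the topological claim the paper leans on the isometry with the $2$-adic integers as a known Cantor set, while you verify Brouwer's characterization (nonempty, compact, perfect, totally disconnected) directly in the shift space; this is a minor difference of packaging. Both arguments deliver the theorem, but yours would survive a referee asking for the omitted measure estimates.
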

\begin{proof}
  Following immediately as a corollary to Theorem
  \ref{thm:equivalent_metrics}, the identity map between these two
  metric spaces, $(\esurf, d)$ and $(\esurf, d_2)$, is a bi-Lipschitz
  map and so preserves Hausdorff dimension.  By
  Proposition~\ref{prop:elusive_binary}, $(\esurf, d_2)$ is isometric
  to the set of 2-adic integers.  Since the $2$-adic integers form a
  totally disconnected perfect set, it follows that $\esurf$ is also
  totally disconnected and perfect, hence, a Cantor set.  Since the
  2-adic integers have Hausdorff dimension $1$ and $(\esurf,d_2)$ is
  isometric with the 2-adic integers, it follows that the Hausdorff of
  $\esurf$, with respect to either $d_2$ or $d$ inherited from
  $\tsurf$ is also equal to $1$.
\end{proof}

\begin{theorem}
  The metric completion $\tsurf$ of the surface $\tsurfr$ is not a
  surface.
\end{theorem}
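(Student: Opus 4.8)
The plan is to exhibit a single point of $\tsurf$ that has no neighborhood homeomorphic to an open subset of $\mathbb{R}^2$; any elusive singularity $x \in \esurf$ (the set is nonempty, indeed a Cantor set, by Theorem~\ref{prop:elusive_cantor}) should work. The obstruction I would exploit is \emph{genus}: the surface $\tsurfr$ has infinite genus, all of which piles up along $\esurf$, whereas a Euclidean neighborhood is planar and cannot contain a handle.

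First I would show that the branches rooted along the address of $x$ form a neighborhood basis. Writing $\alpha(x) = a_1 a_2 \cdots$, the tail of the Cauchy sequence defining $x$ lies in $\mathcal{B}^{a_1 \cdots a_m}$, so $x$ lies in the closure of this branch; combined with the diameter bound $3M \cdot 2^{-m}$ from Lemma~\ref{lemma:branch_diameter}, this gives $\mathcal{B}^{a_1 \cdots a_m} \subseteq B(x, 3M \cdot 2^{-m})$. Hence every neighborhood $U$ of $x$ contains an entire branch $\mathcal{B}^{a_1 \cdots a_m}$ once $m$ is large enough. (The distance bounds of Lemma~\ref{lemma:qt_lowerbound} guarantee that no other branches intrude, but for this argument I only need containment of one branch.)

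Next I would argue that each such branch has positive genus. A branch $\mathcal{B}^s$ is assembled from the scaled quad-T subsurfaces $\{\Q^{st}\}_{t}$ using exactly the combinatorial gluing pattern that assembles $\tsurfr$ from $\{\Q^{t}\}_{t}$, the only discrepancy being that the bottom boundary of the root $\Q^s$ is left free rather than self-identified. Thus $\mathcal{B}^s$ is homeomorphic to $\tsurfr$ with at most one interior gluing cut open, and since cutting along a single arc changes genus by at most one, $\mathcal{B}^s$ inherits the infinite genus of $\tsurfr$; in particular it contains a compact subsurface of positive genus.

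Finally I would derive the contradiction. If $x$ had a neighborhood $U \cong D \subseteq \mathbb{R}^2$, then $U$ would contain a positive-genus branch, whose intersection with $\tsurfr$ would embed as a positive-genus open subsurface of the plane. This is impossible: a positive-genus orientable surface carries simple closed curves $a, b$ with algebraic intersection number $a \cdot b = 1$, a signed count of crossings that is unchanged by the embedding, yet in $\mathbb{R}^2$ the vanishing of $H_1$ forces every such intersection number to be $0$. (Orientability is free, since $\tsurfr$ is a translation surface.) I expect the main obstacle to be the genus step: making precise that the infinite genus of $\tsurfr$ genuinely descends to every branch --- that the self-similar gluing data really produces a surface homeomorphic to $\tsurfr$ up to one controlled cut --- rather than being an artifact that could be concentrated away from $\esurf$. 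Everything else is bookkeeping with the diameter and distance estimates already established.
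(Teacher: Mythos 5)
Your proposal follows the paper's strategy exactly in its first half: the paper likewise shows that every $\epsilon$-ball about an elusive singularity $x$ contains an entire branch $\mathcal{B}^s$, using the diameter bound of Lemma~\ref{lemma:branch_diameter} just as you do. Where you diverge is in the planarity obstruction. The paper avoids genus altogether: it exhibits a concrete vertical geodesic loop $\gamma$ passing through two quad-T subsurfaces of $\mathcal{B}^s$ (Figure~\ref{fig:nontrivial}) and checks that $B_\epsilon(x) \setminus \gamma$ remains path-connected, so $\gamma$ is a non-separating, hence homologically non-trivial, simple closed curve --- something no planar neighborhood can contain. You instead invoke the infinite genus of $\tsurfr$, transfer it to branches by cutting open the root gluings, and rule out a planar embedding via the intersection pairing. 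These are the same obstruction in different clothing (a non-separating simple closed curve is exactly a curve $a$ admitting a partner $b$ with $a \cdot b = 1$), and your final intersection-number step is sound: transferring $a, b$ into an open subset of $\mathbb{R}^2$ forces the pairing to vanish.

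The soft spot is the one you flagged yourself: positive genus of a branch. As written, it rests on the assertion that $\tsurfr$ has infinite genus, which the paper states in its abstract and introduction but never proves; so at precisely this point your argument is not self-contained, whereas the paper's is. Two smaller corrections: relative to the gluing pattern of $\tsurfr$, a branch $\mathcal{B}^s$ has \emph{two} identifications cut open, not one --- the root self-gluings $\gamma_2^\epsilon \sim \sigma_2^\epsilon$ and $\gamma_5^\epsilon \sim \sigma_5^\epsilon$ have no analogue at $\Q^s$, whose components $\gamma_2^s, \sigma_2^s, \gamma_5^s, \sigma_5^s$ are left free --- and these cuts are along boundary loops (each a circle through a cone point), not arcs. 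Neither affects your conclusion, since cutting along two circles lowers genus by at most two and infinity survives. The cleanest repair is exactly the paper's move: exhibit one explicit non-separating curve inside the branch directly, such as the vertical loop through a quad-T and its child, which simultaneously certifies positive genus and hands your intersection argument the curve it needs, with the path-connectivity of the complement verified by inspection of the gluings rather than by appeal to an unproven global invariant.
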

\begin{proof}
  If $\tsurf$ were a surface, then every point would be contained in
  some chart domain homeomorphic to an open subset of the plane.  In
  particular, for every point there would exist some $\epsilon > 0$ so
  that the $\epsilon$-ball centered at that point would be
  homeomorphic to a disc.  We show this is not the case for elusive
  points by noting that every $\epsilon$-ball around an elusive point
  contains a branch of the $T$-fractal: for each elusive singularity $x
  \in \esurf$ and each $\epsilon > 0$, there exists a binary string $s
  \in \mathscr{B}^*$ such that $\mathcal{B}^s \subseteq
  B_\epsilon(x)$.  We now note that $B_\epsilon(x)$ has a non-trivial
  first homology group: consider a vertical, geodesic loop $\mu$ which
  passes through two quad-T subsurfaces in $\mathcal{B}^s$ as shown in
  Figure~\ref{fig:nontrivial}.  The space $B_\epsilon(x) \setminus
  \mu$ remains path-connected, implying $\mu$ is homologically
  non-trivial.  Hence, for every $\epsilon > 0$, $H_1(B_\epsilon(x)) \neq
  0$. Consequently $B_\epsilon(x)$ is not homeomorphic to a disc, so
  $\tsurf$ is not a surface.

  \begin{figure}[h!]
    \centering
    \begin{tikzpicture}[scale=0.66]
      \begin{scope}[thick,xshift=1.25in]
        \draw (0, 0) -- (0, 2) 
        -- (-1, 2)
        -- (-1, 3);
        \draw (0, 3) -- (2, 3);
        \draw (3, 3) -- (3, 2)
        -- (2, 2)
        -- (2, 0);

        \draw (-1, 3) -- (-1, 4) -- (-1.5, 4) -- (-1.5, 4.5);
        \draw[dotted] (-1.5, 4.5) -- (-1, 4.5);
        \draw (-1, 4.5) -- (0, 4.5);
        \draw[dotted] (0, 4.5) -- (0.5, 4.5);
        \draw (0.5, 4.5) -- (0.5, 4) -- (0, 4) -- (0, 3);

        \draw (2, 3) -- (2, 4) -- (1.5, 4) -- (1.5, 4.5);
        \draw[dotted] (1.5, 4.5) -- (2, 4.5);
        \draw (2, 4.5) -- (3, 4.5);
        \draw[dotted] (3, 4.5) -- (3.5, 4.5);
        \draw (3.5, 4.5) -- (3.5, 4) -- (3, 4) -- (3, 3);

        \draw[black!50!white] (2.5, 2) --
        node[right,font=\tiny] {$\mu$} (2.5, 4.5);

        \draw[dotted] (0, 0) -- (2, 0);

      \end{scope}

      \begin{scope}[thick,xshift=1.25in,yscale=-1, yshift=0.5in]
        \draw (0, 0) -- (0, 2)
        --  (-1, 2)
        -- (-1, 3);
        \draw (0, 3) -- (2, 3);
        \draw (3, 3) -- (3, 2)
        -- (2, 2)
        -- (2, 0);

        \draw (-1, 3) -- (-1, 4) -- (-1.5, 4) -- (-1.5, 4.5);
        \draw[dotted] (-1.5, 4.5) -- (-1, 4.5);
        \draw (-1, 4.5) -- (0, 4.5);
        \draw[dotted] (0, 4.5) -- (0.5, 4.5);
        \draw (0.5, 4.5) -- (0.5, 4) -- (0, 4) -- (0, 3);

        \draw (2, 3) -- (2, 4) -- (1.5, 4) -- (1.5, 4.5);
        \draw[dotted] (1.5, 4.5) -- (2, 4.5);
        \draw (2, 4.5) -- (3, 4.5);
        \draw[dotted] (3, 4.5) -- (3.5, 4.5);
        \draw (3.5, 4.5) -- (3.5, 4) -- (3, 4) -- (3, 3);

        \draw[black!50!white] (2.5, 2) -- node[right,font=\tiny]
        {$\mu$} (2.5, 4.5);

        \draw[black!30!white] (2.75, 2.25) -- (3, 2.25);
        \draw[black!30!white] (-1, 2.25) -- (2.25, 2.25);
        \filldraw[black!30!white] (2.25, 2.25) circle (0.01in);
        \filldraw[black!30!white] (2.75, 2.25) circle (0.01in);

        \draw[dotted] (0, 0) -- (2, 0);
      \end{scope}
      
      \begin{scope}[thick,xshift=-1.25in]
        \draw (0, 0) -- (0, 2) 
        -- (-1, 2)
        -- (-1, 3);
        \draw (0, 3) -- (2, 3);
        \draw (3, 3) -- (3, 2)
        -- (2, 2)
        -- (2, 0);

        \draw (-1, 3) -- (-1, 4) -- (-1.5, 4) -- (-1.5, 4.5);
        \draw[dotted] (-1.5, 4.5) -- (-1, 4.5);
        \draw (-1, 4.5) -- (0, 4.5);
        \draw[dotted] (0, 4.5) -- (0.5, 4.5);
        \draw (0.5, 4.5) -- (0.5, 4) -- (0, 4) -- (0, 3);

        \draw (2, 3) -- (2, 4) -- (1.5, 4) -- (1.5, 4.5);
        \draw[dotted] (1.5, 4.5) -- (2, 4.5);
        \draw (2, 4.5) -- (3, 4.5);
        \draw[dotted] (3, 4.5) -- (3.5, 4.5);
        \draw (3.5, 4.5) -- (3.5, 4) -- (3, 4) -- (3, 3);

        \draw[dotted] (0, 0) -- (2, 0);

      \end{scope}
      \begin{scope}[thick,xshift=-1.25in,yscale=-1,yshift=0.5in]
        \draw (0, 0) -- (0, 2) 
        -- (-1, 2)
        -- (-1, 3);
        \draw (0, 3) -- (2, 3);
        \draw (3, 3) -- (3, 2)
        -- (2, 2)
        -- (2, 0);

        \draw (-1, 3) -- (-1, 4) -- (-1.5, 4) -- (-1.5, 4.5);
        \draw[dotted] (-1.5, 4.5) -- (-1, 4.5);
        \draw (-1, 4.5) -- (0, 4.5);
        \draw[dotted] (0, 4.5) -- (0.5, 4.5);
        \draw (0.5, 4.5) -- (0.5, 4) -- (0, 4) -- (0, 3);

        \draw (2, 3) -- (2, 4) -- (1.5, 4) -- (1.5, 4.5);
        \draw[dotted] (1.5, 4.5) -- (2, 4.5);
        \draw (2, 4.5) -- (3, 4.5);
        \draw[dotted] (3, 4.5) -- (3.5, 4.5);
        \draw (3.5, 4.5) -- (3.5, 4) -- (3, 4) -- (3, 3);

        \draw[black!30!white] (-1, 2.25) -- (3, 2.25);

        \draw[dotted] (0, 0) -- (2, 0);

      \end{scope}
    \end{tikzpicture}
    \caption{A homologically non-trivial curve which passes through
      two quad-T subsurfaces inside any $\epsilon$-ball around an
      elusive point.  The grey, horizontal curve shows that the space
      remains path-connected even when $\gamma$ is removed.  Only the
      portions of the quad-T subsurface intersected by the curve are
      shown.}
    \label{fig:nontrivial}
  \end{figure}
\end{proof}

\section{Elusive Singularities are Wild Singularities}
\label{sec:elusiveSingularitiesWildSingularities}
In this section we show that each elusive singularity of the $T$-fractal
is a wild singularity.

Recall from
Definition~\ref{definition:addressOfElusiveSingularityANDtwo-adicMetric}
that $\alpha(x)$ is the address of the elusive singularity $x$.

\begin{lemma}
  \label{lemma:limitpoints}
  Every elusive singularity $x$ is a limit point of the set of 
  conical singularities of $\tsurfr$.
\end{lemma}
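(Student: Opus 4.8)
The plan is to exploit two facts already established: every quad-T subsurface $\Q^s$ carries a conical singularity of cone angle $6\pi$ (one of the reflex corners of the underlying $T$-shape where the $2 \times \sfrac{1}{2}$ rectangle sits atop the $1 \times 1$ square), and the branches $\mathcal{B}^s$ shrink to zero diameter as $|s| \to \infty$ by Lemma~\ref{lemma:branch_diameter}. Given an elusive singularity $x$, I would produce a sequence of conical singularities sitting in deeper and deeper quad-T subsurfaces and show they converge to $x$.

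First I would fix $x \in \esurf$ and let $\alpha(x) = a_1 a_2 a_3 \cdots$ be its address from Proposition~\ref{prop:elusive_binary}. For each $n$ write $s_n = a_1 a_2 \cdots a_n$ for the length-$n$ prefix, and choose a conical singularity $c_n \in \Q^{s_n}$; such a point exists because $\Q^{s_n}$ is a rescaling of $\Q$, which contains the $6\pi$ cone points coming from the corners of the $T$-shape. Next I would show that $x$ and $c_n$ lie in the closure of a common branch. By Lemma~\ref{lemma:cauchy_strings} together with Proposition~\ref{prop:elusive_binary}, $x$ is the limit of a Cauchy sequence $(y_j)$ with $\sigma(y_j) = s_j$; for every $j \geq n$ the string $s_j$ has $s_n$ as a prefix, so $y_j \in \mathcal{B}^{s_n}$, and hence $x \in \overline{\mathcal{B}^{s_n}}$. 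Since also $c_n \in \Q^{s_n} \subseteq \mathcal{B}^{s_n}$, Lemma~\ref{lemma:branch_diameter} and the fact that a set and its closure have the same diameter give
\[
  d(x, c_n) \leq \operatorname{diam}\bigl(\mathcal{B}^{s_n}\bigr) \leq 3M \cdot 2^{-n}.
\]
Letting $n \to \infty$ shows $c_n \to x$.

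Finally I would conclude that $x$ is a limit point. Because $x$ is elusive it lies in no quad-T subsurface, whereas each $c_n \in \Q^{s_n}$, so $c_n \neq x$ for every $n$. Combined with $d(x, c_n) \to 0$, this means every neighborhood of $x$ contains a conical singularity distinct from $x$, which is exactly the assertion that $x$ is a limit point of the set of conical singularities.

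The only step needing genuine geometric input is the claim that each quad-T subsurface contains a conical singularity, and this is already recorded in the discussion following the definition of $\tsurfr$. Everything else is a soft convergence argument driven by the diameter estimate of Lemma~\ref{lemma:branch_diameter}; I expect no real obstacle beyond being careful that the Cauchy sequence representing $x$ eventually enters $\mathcal{B}^{s_n}$, which follows immediately from the prefix structure of addresses.
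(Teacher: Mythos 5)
Your proposal is correct and takes essentially the same route as the paper: both fix the address $\alpha(x)$, pass to its length-$n$ prefixes $s_n$, and use the branch-diameter bound of Lemma~\ref{lemma:branch_diameter} to conclude that the conical singularities in $\mathcal{B}^{s_n}$ (hence in $\Q^{s_n}$) lie within $3M \cdot 2^{-n}$ of $x$. You merely make explicit two small points the paper leaves implicit --- that $x \in \overline{\mathcal{B}^{s_n}}$ via the Cauchy sequence of Lemma~\ref{lemma:cauchy_strings}, and that $c_n \neq x$ since $x$ lies in no quad-T subsurface --- which is a welcome tightening, not a different argument.
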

\begin{proof}
  Suppose that $x$ is an elusive singularity with address
  $\alpha(x) = (\alpha_n)_{n \in \mathbb{N}}$ and let $\epsilon > 0$
  be given.  Choose $k > 0$ such that $15 \cdot 2^{-k} < \epsilon$. 
  Let
  $s$ be the string $s = \alpha_1 \alpha_2 \ldots \alpha_k$.  The
  quad-T subsurfaces of the branch $\mathcal{B}^s$ of the $T$-fractal
  translation surface are then within $\epsilon$-distance of $x$, and hence,
  so are the conical singularities of those subsurfaces.
\end{proof}

\begin{theorem}
  Every elusive singularity of the $T$-fractal surface is a wild
  singularity.
\end{theorem}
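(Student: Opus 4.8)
The plan is to argue by contradiction, exploiting the fact established in Lemma~\ref{lemma:limitpoints} that every elusive singularity is a limit point of the set of conical singularities. The key observation is that each of the three \emph{tame} alternatives to wildness forces a punctured neighborhood of the singularity to be isometric to a nonsingular flat surface, and this is incompatible with having other singularities accumulate at the point.

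First I would recall the trichotomy built into the definition of a wild singularity: if an elusive singularity $x$ were \emph{not} wild, then it would be removable, a finite angle conical singularity, or an infinite angle conical singularity. In each of these three cases the definition supplies an explicit local model. For a removable singularity a neighborhood of $x$ is isometric to a disc in the plane; for a finite angle conical singularity a punctured neighborhood of $x$ is isometric to a cyclic $n$-cover of the punctured disc; and for an infinite angle conical singularity a punctured neighborhood of $x$ is isometric to an infinite cover of the punctured disc. In all three cases there is a neighborhood $N$ of $x$ such that $N \setminus \{x\}$ is a flat \emph{manifold}: each of its points is regular, being locally isometric to an open subset of the plane. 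In particular $N \setminus \{x\}$ contains no singular points of $\tsurf$.

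The contradiction then comes directly from Lemma~\ref{lemma:limitpoints}. Since $x$ is a limit point of the set of conical singularities of $\tsurfr$, the neighborhood $N$ fixed above must contain a conical singularity; and because $x$ is elusive, hence not one of the $6\pi$ conical points carried by the quad-T subsurfaces, this conical singularity is distinct from $x$ and so lies in $N \setminus \{x\}$. This contradicts the conclusion of the previous paragraph that $N \setminus \{x\}$ is free of singularities. Hence $x$ fits none of the three tame local models and must be a wild singularity. In writing this out I would be careful to state explicitly that the accumulating conical points are genuinely distinct from $x$ and that at least one of them enters the prescribed neighborhood $N$, both of which are exactly what Lemma~\ref{lemma:limitpoints} guarantees.

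The step I expect to require the most care is confirming that each tame model really does preclude nearby singularities — in particular handling the infinite angle case on the same footing as the finite cyclic covers — but this is immediate once one reads off the isometry statements recorded in the definition of the singularity types, since a cover of the punctured disc has no cone points in its interior. As an independent confirmation one could instead invoke the homology computation from the proof that $\tsurf$ is not a surface: a tame singularity has sufficiently small metric balls that deformation retract onto $x$ and therefore have trivial first homology, whereas we have already shown that $H_1(B_\epsilon(x)) \neq 0$ for every $\epsilon > 0$. Either route establishes wildness; the accumulation-of-conical-singularities argument is the shorter one, and it is the approach I would present.
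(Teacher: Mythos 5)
Your argument is correct and is essentially the paper's own proof: both rule out the tame alternatives (removable/finite-angle/infinite-angle conical) by noting that each forces a punctured neighborhood of $x$ isometric to a cover of the punctured disc, which is incompatible with Lemma~\ref{lemma:limitpoints}'s conclusion that conical singularities accumulate at every elusive singularity. Your additional care about distinctness of the accumulating cone points and the alternative homological check are fine refinements but do not change the route.
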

\begin{proof}
  We simply need to show that each elusive singularity cannot be a
  conical singularity of either finite or infinite angle.

  Suppose $x \in \esurf$ had a rotational component isometric to a
  circle.  It would then be possible to embed a punctured disc in
  $\tsurfr$ centered at $x$.  However, this is impossible by
  Lemma~\ref{lemma:limitpoints} as any neighborhood around an elusive
  singularity must contain conical singularities.  Thus no rotational
  component of $x$ is isometric to a circle, and so $x$ cannot be a
  finite angle conical singularity.

  Similarly, if an elusive singularity were an infinite angle conical
  singularity, then a punctured neighborhood of the point in
  $\mathring{T}$ would be an infinite cyclic cover of the disc.
  However, by Lemma~\ref{lemma:limitpoints} this cannot be the case
  since any neighborhood of an elusive singularity contains
  infinitely-many conical singularities.
\end{proof}

In \cite{BowmanValdez}, Bowman and Valdez made the explicit assumption
that the singularity set of a translation surface is discrete to rule
out certain pathological examples.  The Cantor set of singularities on
the $T$-fractal surface is of course not discrete, and so it is
conceivable that some typical notions associated with wild
singularities, such as linear approaches and rotational components,
are not well-defined or at least not interesting for the $T$-fractal
surface.

\begin{lemma}
  Every elusive singularity has infinitely-many rotational components.
\end{lemma}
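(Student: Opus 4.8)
The plan is to exhibit infinitely many linear approaches to $x$ that are pairwise not rotationally equivalent, using the conical singularities that accumulate at $x$ (Lemma~\ref{lemma:limitpoints}) as obstructions to building connecting sectors. Fix the address $\alpha(x) = \alpha_1\alpha_2\cdots$ and write $s_n = \alpha_1\cdots\alpha_n$ for its length-$n$ prefix, so that the quad-T subsurfaces $\Q^{s_0}, \Q^{s_1}, \Q^{s_2}, \ldots$ form the chain climbing toward $x$. Each $\Q^{s_n}$ carries conical singularities of angle $6\pi$ (the unfolded armpit corners of the underlying T-pieces), and by Lemma~\ref{lemma:limitpoints} together with Lemma~\ref{lemma:branch_diameter} at least one such cone point $c_n \in \Q^{s_n}$ satisfies $d(c_n, x) \to 0$. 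The key structural input, read off from the gluing in Figure~\ref{fig:quadt} and the identifications defining $\tsurfr$, is that $\Q^{s_n}$ is joined to $\Q^{s_{n+1}}$ through several distinct boundary components (``necks''), and that the cone point $c_n$ lies between two of these necks, so that a geodesic climbing from $\Q^{s_n}$ to $\Q^{s_{n+1}}$ may pass $c_n$ on either side.

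Next I would use these choices to build a countable family of linear approaches. For each binary pattern $\beta \in \{0,1\}^{\mathbb{N}}$ I want a geodesic ray $\gamma_\beta$ converging to $x$ which, at each level $n$, passes the cone point $c_n$ on the side dictated by $\beta_n$. I expect to produce such rays from the self-similarity of the construction: since each $\Q^{s_n}$ is an isometric $2^{-n}$-scaled copy of $\Q$, a straight segment threading a given neck can be continued, at the next scale, through either adjacent neck, and the nested branch-diameter bound of Lemma~\ref{lemma:branch_diameter} guarantees the resulting ray is Cauchy and converges to $x$. Choosing patterns $\beta$ that differ cofinally yields rays that approach $x$ from genuinely different directions, and hence are not directionally equivalent.

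The heart of the argument is then to show that if $\beta$ and $\beta'$ first disagree at level $n$, then $\gamma_\beta$ and $\gamma_{\beta'}$ are not rotationally equivalent. Suppose they were: by Lemma~\ref{lemma:nonintersecting} they would admit disjoint directionally equivalent representatives lying in a common embedded sector $\psi : S_{r, \theta} \to \tsurfr$ centered at $x$. Such a sector is isometric to a flat wedge and therefore contains no conical singularity in its image. But the two representatives straddle $c_n$ near radius $d(c_n, x)$, so the cross-sectional arcs of the sector lying between them must sweep across $c_n$, placing the cone point in the image of $\psi$ --- a contradiction. Consequently the rays $\gamma_\beta$ realize infinitely many distinct rotational components, proving the claim.

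I expect the main obstacle to be precisely this separation step: making rigorous that two approaches passing $c_n$ on opposite sides cannot be joined through a cone-point-free sector (equivalently, that the flat sector interpolating two such rays must enclose $c_n$), and, secondarily, verifying from the explicit identifications that a genuine $6\pi$ cone point sits between the relevant necks at every level. Establishing non-directional-equivalence of the constructed rays --- that threading distinct neck patterns keeps their tails from synchronizing near $x$ --- is the remaining technical point, which I anticipate following from the fact that distinct geodesics converging to $x$ from a common direction would have to eventually coincide.
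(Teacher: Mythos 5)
Your strategy is genuinely different from the paper's, and it has two real gaps. First, the construction of the rays $\gamma_\beta$ is asserted rather than proved. A geodesic on a translation surface is determined by an initial point and a direction; it cannot ``choose'' at each scale which side of $c_n$ to pass. Realizing a prescribed side-pattern while simultaneously forcing the cutting sequence to climb through the quad-T's $\Q^{s_n}$ of the address $\alpha(x)$ requires a shooting or nested-interval argument on initial conditions, and it is not clear it can succeed: for a fixed slope, the admissible crossings at level $n+1$ are dictated by the level-$n$ crossing, so the nonemptiness of the infinite nested intersection of constraints is precisely what needs proof, and the proposal offers none. Second, the separation step --- which you correctly flag as the heart of the matter --- is not valid as stated. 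A sector $\psi : S_{r,\theta} \to \tsurfr$ is an isometrically embedded flat wedge, but the cone angle at $c_n$ is $6\pi > 2\pi$, so an embedded flat region can pass \emph{around} a cone point through the excess angle without containing it; moreover the image of $\psi$ need not contain the naive planar region ``between'' the two rays. Hence two approaches straddling $c_n$ at a single scale do not obviously force $c_n$ into the image of $\psi$. The claim can be rescued, but only quantitatively, and this is exactly the mechanism of the paper's \emph{later} theorem that rotational components have length zero: an embedded positive-angle sector narrows only linearly toward $x$, while the quad-T boundary components crossing it shrink by a factor of $\sfrac{1}{2}$ per level, so eventually an endpoint of a boundary component --- a cone point --- must lie in the sector's interior, a contradiction. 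Your proposal, made rigorous along these lines, would essentially be proving that stronger theorem first.

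The paper's proof of the present lemma sidesteps both difficulties. Each quad-T subsurface decomposes into four horizontal cylinders of moduli $2$ and $8$, so there is an affine diffeomorphism $\phi : \tsurfr \to \tsurfr$ with derivative
\[
  D = \begin{pmatrix} 1 & 8 \\ 0 & 1 \end{pmatrix}
\]
that twists each cylinder while fixing all cylinder (hence quad-T) boundaries pointwise. Starting from \emph{any} single linear approach $\gamma$ to the elusive singularity --- no construction with prescribed combinatorics is needed --- the image $\phi(\gamma)$ is again a linear approach to the same singularity, and the twisting forces $\gamma$ and $\phi(\gamma)$ to intersect in every quad-T that $\gamma$ traverses. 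Since linear approaches to $x$ traverse infinitely many quad-T's, no directionally equivalent representatives can be made disjoint, so by Lemma~\ref{lemma:nonintersecting} the two approaches lie in distinct rotational components; iterating $\phi$ yields the infinite family $\gamma, \phi(\gamma), \phi^2(\gamma), \ldots$, pairwise in distinct components. Note also that the cone points $c_n$ accumulating at $x$ (your Lemma~\ref{lemma:limitpoints} input) are used in the paper not here but in the preceding theorem, to rule out conical behavior; for counting rotational components the twist argument is both simpler and complete.
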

\begin{proof}
  To prove this we will consider the action of a particular symmetry
  of the surface $\tsurfr$ on linear approaches to elusive
  singularities.  Notice that each quad-T subsurface has four
  horizontal cylinders as shown in Figure~\ref{fig:cylinders}.
  \begin{figure}[h!]
    \centering
    \begin{tikzpicture}[scale=0.66]
      \begin{scope}[thick,xshift=1.25in]
        \filldraw[black!15!white] (-1, 2) rectangle (3, 3);
        \filldraw[black!30!white] (0, 0) rectangle (2, 2);
        
        \draw (0, 0) -- node[scale=0.66,left] {$E$} (0, 2) 
        -- node[scale=0.66,below] {$F$} (-1, 2)
        -- node[scale=0.66,left] {$G$} (-1, 3);
        \draw (0, 3) -- node[scale=0.66,above] {$A$} (2, 3);
        \draw (3, 3) -- node[scale=0.66,right] {$B$} (3, 2)
        -- node[scale=0.66,below] {$C$} (2, 2)
        -- node[scale=0.66,right] {$D$} (2, 0);

        \draw[\sigmaColor] (-1, 3) -- node[above] {$\sigma_4$} (0, 3);
        \draw[\sigmaColor] (2, 3) -- node[above] {$\sigma_6$} (3, 3);
        \draw[\gammaColor] (2, 0) -- node[below] {$\gamma_5$} (0, 0);
      \end{scope}

      \begin{scope}[thick,xshift=-1.25in]
        \filldraw[black!15!white] (-1, 2) rectangle (3, 3);
        \filldraw[black!30!white] (0, 0) rectangle (2, 2);

        \draw (0, 0) -- node[scale=0.66,left] {$D$} (0, 2)
        -- node[scale=0.66,below] {$J$} (-1, 2)
        -- node[scale=0.66,left] {$B$} (-1, 3);
        \draw (0, 3) -- node[scale=0.66,above] {$H$} (2, 3);
        \draw (3, 3) -- node[scale=0.66,right] {$G$} (3, 2)
        -- node[scale=0.66,below] {$I$} (2, 2)
        -- node[scale=0.66,right] {$E$} (2, 0);

        \draw[\sigmaColor] (-1, 3) -- node[above] {$\sigma_1$}(0, 3);
        \draw[\sigmaColor] (2, 3) -- node[above] {$\sigma_3$} (3, 3);
        \draw[\gammaColor] (2, 0) -- node[below] {$\gamma_2$} (0, 0);
      \end{scope}

      \begin{scope}[thick,xshift=-1.25in,yscale=-1, yshift=0.5in]
        \filldraw[black!60!white] (-1, 2) rectangle (3, 3);
        \filldraw[black!45!white] (0, 0) rectangle (2, 2);

        \draw (0, 0) -- node[scale=0.66,left] {$N$} (0, 2)
        -- node[scale=0.66,above] {$J$} (-1, 2)
        -- node[scale=0.66,left] {$M$} (-1, 3);
        \draw (0, 3) -- node[scale=0.66,below] {$H$} (2, 3);
        \draw (3, 3) -- node[scale=0.66,right] {$L$} (3, 2)
        -- node[scale=0.66,above] {$I$} (2, 2)
        -- node[scale=0.66,right] {$K$} (2, 0);

        \draw[\gammaColor] (-1, 3) -- node[below] {$\gamma_1$} (0, 3);
        \draw[\gammaColor] (2, 3) -- node[below] {$\gamma_3$} (3, 3);
        \draw[\sigmaColor] (2, 0) -- node[above] {$\sigma_2$} (0, 0);
      \end{scope}
      
      \begin{scope}[thick,xshift=1.25in,yscale=-1, yshift=0.5in]
        \filldraw[black!60!white] (-1, 2) rectangle (3, 3);
        \filldraw[black!45!white] (0, 0) rectangle (2, 2);

        \draw (0, 0) -- node[scale=0.66,left] {$K$} (0, 2)
        -- node[scale=0.66,above] {$F$} (-1, 2)
        -- node[scale=0.66,left] {$L$} (-1, 3);
        \draw (0, 3) -- node[scale=0.66,below] {$A$} (2, 3);
        \draw (3, 3) -- node[scale=0.66,right] {$M$} (3, 2)
        -- node[scale=0.66,above] {$C$} (2, 2)
        -- node[scale=0.66,right] {$N$} (2, 0);

        \draw[\gammaColor] (-1, 3) -- node[below] {$\gamma_4$} (0, 3);
        \draw[\gammaColor] (2, 3) -- node[below] {$\gamma_6$} (3, 3);
        \draw[\sigmaColor] (2, 0) -- node[above] {$\sigma_5$} (0, 0);
      \end{scope}
    \end{tikzpicture}
    \caption{Each quad-T subsurface is built from four horizontal cylinders.}
    \label{fig:cylinders}
  \end{figure}
  Two of these cylinders have dimensions $4 \times 2$, and so have
  modulus $2$; and the other two cylinders have dimensions $8 \times
  1$ with modulus $8$.  Thus the affine diffeomorphism with derivative
  \[
    D = \begin{pmatrix} 1 & 8 \\ 0 & 1 \end{pmatrix}
  \]
  acts by twisting these cylinders in such a way that the horizontal
  foliation in each cylinder is preserved, but the boundaries of the
  cylinders are fixed pointwise.  Since this is true for each quad-T
  subsurface, there exists some well-defined affine diffeomorphism
  $\phi : \tsurfr \to \tsurfr$ with derivative $D$.

  Let $\mu$ be any linear approach to an elusive singularity.  Since
  $\phi$ fixes the boundary components of each horizontal cylinder in
  each quad-T subsurface, $\phi(\mu)$ is another linear approach to
  the same elusive singularity.  However, because $\phi$ twists each
  cylinder (the $4 \times 2$ cylinders are twisted four times, and the
  $8 \times 1$ cylinders are twisted once), $\mu$ and $\phi(\gamma)$
  intersect in each quad-T containing $\mu$ (and hence, $\phi(\mu)$),
  as indicated in Figure~\ref{fig:twists}.
  \begin{figure}[h!]
    \centering
    \begin{tikzpicture}[scale=0.66]
      \begin{scope}[thick,xshift=1.25in]
        \draw[->] (0.5, 0) -- (2.25, 3); 
        
        \draw[->,gray] (0.5, 0) -- (2, 0.1714); 
        \draw[->,gray] (0, 0.4) -- (2, 0.6286);
        \draw[->,gray] (0, 0.8572) -- (2, 1.0858);
        \draw[->,gray] (0, 1.3144) -- (2, 1.543);
        \draw[gray] (0,1.7716) -- (1.6667, 2);
        \draw[->,gray] (1.6667, 2) -- (3, 2.1524);
        \draw[->,gray] (-1, 2.6096) -- (2.25, 3);
        
        \draw (0, 0) -- (0, 2) 
        -- (-1, 2)
        -- (-1, 3);
        \draw (0, 3) -- (2, 3);
        \draw (3, 3) -- (3, 2)
        -- (2, 2)
        -- (2, 0);

        \draw[\sigmaColor] (-1, 3) -- (0, 3);
        \draw[\sigmaColor] (2, 3) -- (3, 3);
        \draw[\gammaColor] (2, 0) -- (0, 0);
      \end{scope}

      \begin{scope}[thick,xshift=-1.25in]
        \draw[->,gray] (0, 0.1714) -- (2, 0.4);
        \draw[->,gray] (0, 0.6286) -- (2, 0.8572);
        \draw[->,gray] (0, 1.0858) -- (2, 1.3144);
        \draw[->,gray] (0, 1.543) -- (2, 1.7716);
        \draw[->,gray] (-1, 2.1524) -- (3, 2.6096);
        
        \draw (0, 0) -- (0, 2)
        -- (-1, 2)
        -- (-1, 3);
        \draw (0, 3) -- (2, 3);
        \draw (3, 3) -- (3, 2)
        -- (2, 2)
        -- (2, 0);

        \draw[\sigmaColor] (-1, 3) -- (0, 3);
        \draw[\sigmaColor] (2, 3) -- (3, 3);
        \draw[\gammaColor] (2, 0) -- (0, 0);
      \end{scope}

      \begin{scope}[thick,xshift=-1.25in,yscale=-1, yshift=0.5in]
        \draw (0, 0) -- (0, 2)
        -- (-1, 2)
        -- (-1, 3);
        \draw (0, 3) -- (2, 3);
        \draw (3, 3) -- (3, 2)
        -- (2, 2)
        -- (2, 0);

        \draw[\gammaColor] (-1, 3) -- (0, 3);
        \draw[\gammaColor] (2, 3) -- (3, 3);
        \draw[\sigmaColor] (2, 0) -- (0, 0);
      \end{scope}
      
      \begin{scope}[thick,xshift=1.25in,yscale=-1, yshift=0.5in]
        \draw (0, 0) -- (0, 2)
        -- (-1, 2)
        -- (-1, 3);
        \draw (0, 3) -- (2, 3);
        \draw (3, 3) -- (3, 2)
        -- (2, 2)
        -- (2, 0);

        \draw[\gammaColor] (-1, 3) -- (0, 3);
        \draw[\gammaColor] (2, 3) -- (3, 3);
        \draw[\sigmaColor] (2, 0) -- (0, 0);
      \end{scope}
    \end{tikzpicture}
    \caption{Given a geodesic $\mu$ (the dark curve in this figure)
      we construct a new linear approach $\phi(\mu)$ (the lighter
      curve), which must pass through the same sequence of quad-T's
      since $\phi$ preserves the boundary each quad-T.  If $\mu$ is
      a linear approach to an elusive singularity, then $\phi(\mu)$
      is another linear approach.}
    \label{fig:twists}
  \end{figure}
  This means that $\mu$ and $\phi(\mu)$ cannot be rotationally
  equivalent.  Repeating this process by iterating $\phi$ generates a
  sequence of linear approaches to the elusive singularity, $\mu$,
  $\phi(\gamma)$, $\phi^2(\gamma)$, $\ldots$, each of which is in a
  different rotational component than the other linear approaches in
  the sequence.  Hence, the elusive singularity has infinitely-many
  rotational components.
\end{proof}

We now show that only elusive singularities with rational addresses
may admit rotational components of positive cone angle.

\begin{theorem}
  \label{thm:positive_angle_rational}
  If an elusive singularity of the T-fractal surface has a rotational
  component with a positive cone angle, then the elusive singularity
  has a rational address.
\end{theorem}

Before giving the proof of Theorem~\ref{thm:positive_angle_rational},
we discuss the strategy of the proof for the convenience of the
reader.  We will show that each linear approach in a rotational component of
positive cone angle is ``periodic,'' in the sense that it must pass
through scaled copies of the same point on the boundaries of quad-T
subsurfaces.  That is, if $\gamma$ is such a linear approach, we
will show there exists some point $x$ on $\partial \mathcal{Q}$ so
$\gamma$ goes through $x^{s}$ and $x^{s'}$ where these are copies of
$x$ on the boundaries of quad-T's $\mathcal{Q}^s$ and
$\mathcal{Q}^{s'}$ which $\gamma$ intersects.  Once the existence of
such $x^s$ and $x^{s'}$ is established, we see that the sequence of
$1$'s and $0$'s which are appended to $s$ as $\gamma$ moves from $x^s$
to $x^{s'}$ must be repeated before $\gamma$ intersects some point
$x^{s''}$ which is again a copy of $x$ on a scaled quad-T
$\mathcal{Q}^{s''}$.  This process repeats \emph{ad infinitum} giving a
repeating address for the elusive singularity, and so the address is
rational.

In order to establish the existence of $x^s$ and $x^{s'}$, we will
consider an embedded triangle of linear approaches to the elusive
singularity.  This triangle intersects infinitely-many boundaries of
quad-T subsurfaces (horizontal saddle connections) in an interval.
Though the lengths of the horizontal saddle connections shrink to zero
as we approach the elusive singularity, we can consider the proportion
of the saddle connection which the triangle of linear approaches
intersects; this is tantamount to renormalizing these horizontal
saddle connections by an affine map to the unit interval and consider
the subintervals which correspond to the intersection with the
aforementioned triangle.  We will show that each of these renormalized
subintervals has the same length (corresponding to the triangle
intersecting the shrinking horizontal saddle connections in the same
proportions -- e.g., $\sfrac{1}{2}$ of each horizontal saddle
connection), and so eventually two of these renormalized subintervals
must intersect.  By considering a certain function defined on these
subintervals, we will show that if two of these subintervals intersect,
they are in fact the same subinterval.  Points on the quad-T
boundaries corresponding to these intersecting, renormalized intervals
are then intersected by the same linear approaches, giving our $x^s$
and $x^{s'}$.
  
\begin{proof}[Proof of Theorem~\ref{thm:positive_angle_rational}]
  Let $e$ be an elusive singularity with some rotational component $R$
  of positive cone angle.  Suppose that $\gamma_1$ and $\gamma_2$ are
  two distinct linear approaches to $e$ contained in $R$.  Without
  loss of generality, we may suppose that $\gamma_1$ and $\gamma_2$
  are emitted from points on some boundary component $\sigma_i^s$ of
  some quad-T subsurface.  Let $\Delta$ denote the Euclidean triangle
  embedded in the T-fractal which has $e$ as a vertex, $\gamma_1$ and
  $\gamma_2$ as edges adjacent to $e$, and whose edge opposite of $e$
  is the boundary of the quad-T subsurface from which $\gamma_1$ and
  $\gamma_2$ are emitted.  Note that by assumption $\Delta$ contains
  no cone points in its interior or on its boundary.  Let $I_0$ denote
  the edge of $\Delta$ opposite $e$, and let $J_0$ denote the
  horizontal saddle connection (the boundary component $\sigma_i^s$)
  containing $I_0$.

  Note that for each point $x$ of $I_0$ there is a linear approach
  to $e$ emitted from $x$, and each of these linear approaches has a
  distinct slope.  Let $m_0 : I_0 \to \mathbb{R}$ denote the function
  which associates to each $x \in I_0$ the reciprocal of the slope of the linear
  approach to $e$ emanating from $x$.  Observe that $m_0$ is the
  non-constant affine map which assigns the reciprocal of the slope of $\gamma_0$ to the
  left-hand endpoint of $I_0$, and the recirprocal of the slope of $\gamma_1$ to the
  right-hand endpoint of $I_1$.

  Every linear approach in $\Delta$ emanating from a point of $I_0$
  must cross infinitely-many quad-T subsurfaces which are scaled
  copies of translation surface with boundary, $\mathcal{Q}$.  As
  $\mathcal{Q}$ has finitely-many boundary components, there is some
  $i \in \{1, 2, ..., 6\}$ so that the scaled boundary components
  $\sigma_i^s$ for quad-T subsurfaces $\mathcal{Q}^s$ are crossed
  infinitely-many times.  Let $J_n$ denote the sequence of these
  boundary components intersected by the linear approaches in
  $\Delta$, and let $I_n = \Delta \cap J_n$ denote the horizontal
  segments which $\Delta$ intersects on these boundary components.

  Just as we defined an affine $m_0 : I_0 \to \mathbb{R}$ which
  records the reciprocal of the slope of linear approaches to $e$ emitted from points of
  $I_0$, we may also similarly define affine maps $m_n : I_n \to
  \mathbb{R}$.  Notice that for each $x \in I_n$ if we have that $x'
  \in I_{n+1}$ is the point along the linear approach which passes
  through $x$, then $m_n(x) = m_{n+1}(x')$.

  For each $n$, consider the affine map $\pi_n : J_n \to [0, 1]$ which
  simply rescales each $J_n$ so that the left-hand endpoint of $J_n$
  is mapped to $0$, and the right-hand endpoint is mapped to $1$.
  Observe that for each $n$ there exists a subtriangle of $\Delta$
  with vertex $e$, sides which are subsegments of $\gamma_1$ and
  $\gamma_2$, and whose edge opposite of $e$ is $I_n$.  As these triangles
  are all similar, we see that each subinterval
  $\pi_n(I_n) \subseteq [0, 1]$ has the same length, which we call
  $\ell$.

  Defined on each interval $\pi_n(I_n)$, consider the function $M_n =
  m_n \circ \pi_n^{-1}\big|_{\pi_n(I_n)}$.  As each $\pi_n(I_n)$ has
  the same length $\ell$ and each $m_n$ records the same sequence of
  reciprocals of slopes of linear approaches in $\Delta$, we see that the $M_n$ are
  all translates of one another; for each $n$ and $n'$ there exists
  some constant $k_{n,n'}$ so that $M_n'(x) = M_n(x + k_{n,n'})$.

  By the assumption that the rotational component $R$ has positive
  length and $\gamma_1$, $\gamma_2$ are distinct linear approaches, we
  have $\ell > 0$.  As such, the intervals $\pi_n(I_n) \subseteq [0,
    1]$ can not all be disjoint.  Let $n_0$ and $n_1$ be chosen so
  that $\pi_{n_0}(I_{n_0})$ and $\pi_{n_1}(I_{n_1})$ intersect in a
  subinterval of positive length, and let $x$ be some point in that
  intersection.  If $x_0 = \pi_{n_0}^{-1}(x)$ and $x_1 =
  \pi_{n_1}^{-1}(x)$, then there is a linear approach in $\Delta$
  which passes through $x_0$ and $x_1$, and so $m_{n_0}(x_0) =
  m_{n_1}(x_1)$ and this means $M_{n_0}(x) = M_{n_1}(x)$.  As
  $M_{n_1}$ is a translate of the non-constant affine function
  $M_{n_0}$, we must have $M_{n_0} = M_{n_1}$ if there is any $x$ on
  which the two functions agree.  This then means $\pi_{n_0}(I_{n_0})
  = \pi_{n_1}(I_{n_1})$, and so $I_{n_0}$ and $I_{n_1}$ are scaled
  copies of the same interval on the boundaries of their respective
  quad-T subsurfaces.  Hence the linear approaches in $\Delta$ which
  pass through points of $I_{n_0}$ must pass through scaled copies of
  the same points on $I_{n_1}$.  Thus all linear approaches in
  $\Delta$ repeatedly pass these same points on these scaled copies,
  meaning the sequence of zeros and ones appeneded in constructing the
  address of the elusive singularity repeat, and so $e$ has a rational
  address. 
\end{proof}

\begin{corollary}
  \label{cor:ae_rotational_zero_angle}
  The rotational components of almost every elusive singularity of the
  T-fractal surface have zero cone angle.
\end{corollary}

Theorem~\ref{thm:positive_angle_rational} above shows that if a
rotational component has positive cone angle, its elusive singularity must
have rational address.  The theorem does not show, however, that all
rotational components of elusive singularities with rational addresses
necessarily have positive cone angle.  The authors conjecture that this is
true, and one interesting problem would be to determine the cone angle of
the rotational components of rational elusive singularities as a
function of the singularity's address.

\section{Final Remarks}
\label{sec:finalRemarks}
There are many questions about the $T$-fractal surface that are still
unanswered.  In particular, determining precisely which directions
admit linear approaches to elusive singularities and what values can
arise as cone angles of rotational components of elusive singularities
with rational address, are two problems which the authors would be
interesting for future research.
Perhaps the most obvious questions, however, are concerned with the
dynamics of flows on the surface.  In a future paper, the authors
study these dynamical questions by considering an infinite interval
exchange transformation which is the first-return map of the flow to a
collection of particular geodesic intervals on the surface.

\subsection*{Acknowledgements}
The authors are grateful to the referee for their insightful comments
and recommendations for this article.




\providecommand{\bysame}{\leavevmode\hbox to3em{\hrulefill}\thinspace}
\providecommand{\MR}{\relax\ifhmode\unskip\space\fi MR }
\providecommand{\MRhref}[2]{%
  \href{http://www.ams.org/mathscinet-getitem?mr=#1}{#2}
}
\providecommand{\href}[2]{#2}

\end{document}